\theoremstyle{plain}
\newtheorem{theorem}{Theorem}[section]
\newtheorem*{theorem*}{Theorem}
\newtheorem{lemma}[theorem]{Lemma}
\newtheorem{corollary}[theorem]{Corollary}
\newtheorem*{corollary*}{Corollary}
\newtheorem{proposition}[theorem]{Proposition}
\newtheorem{conjecture}[theorem]{Conjecture}
\theoremstyle{definition}
\newtheorem{example}[theorem]{Example}
\newtheorem{definition}[theorem]{Definition}
\theoremstyle{remark}
\newtheorem{remark}[theorem]{Remark}
\numberwithin{equation}{section}
 \newcommand\KK{{\mathbb{K}}}
 \def\QQ{\mathbb{Q}}
 \newcommand\ZZ{{\mathbb{Z}}}
 \def\A{\mathcal{A}}
 \def\B{{\mathcal B}}
 \def\F{{\mathcal F}}
 \def\N{{\mathcal N}}
 \def\S{{\mathcal S}}
 \def\Lk{{\text{Lk}}}
 \def\St{\text{St}}  
 \def\op{\mathrm{op}}
 \DeclareMathOperator\GL{GL}
 \DeclareMathOperator\PSL{PSL}
 \DeclareMathOperator\PGL{PGL}
 \DeclareMathOperator{\PSU}{PSU}
 \DeclareMathOperator{\PSp}{PSp}
 \DeclareMathOperator{\POmega}{P\Omega}
 \DeclareMathOperator{\Sym}{Sym}
 \DeclareMathOperator{\Alt}{Alt}
 \DeclareMathOperator\Lie{Lie}
 \DeclareMathOperator\Aut{Aut}
 \DeclareMathOperator\Out{Out}
 \DeclareMathOperator\Inn{Inn}
 \DeclareMathOperator\Outdiag{Out diag}
 \DeclareMathOperator\Inndiag{Inn diag}
 \DeclareMathOperator\Ker{Ker}
 \newcommand{\tq}{\mathrel{{\ensuremath{\: : \: }}}}
 \DeclareMathOperator\Stab{Stab}
 \DeclareMathOperator\Ind{Ind}
 \def\Sp{\mathcal{S}_p} 
 \def\Ap{\mathcal{A}_p}
 \def\Bp{\mathcal{B}_p}
 \def\groupiso{\cong}
 \newcommand\gen[1]{\left\langle#1\right\rangle}
\renewcommand{\gen}[1]{\langle #1 \rangle}
\def\Frob{\mathbb{F}}
\def\GG{\mathbb{G}}
\def\Lie{\mathrm{Lie}}
\def\PSp{\mathrm{PSp}}
\def\name{\mathrm{SCNL}}
\begin{document}

\title{Spherical $p$-group complexes arising from finite groups of Lie type}

\author{\small Kevin I. Piterman}
\address{Fachbereich Mathematik und Informatik, Philipps-Universit{\"a}t Marburg, 35032 Marburg, Germany; and Department of Mathematics and Data Science, Vrije Universiteit Brussel, 1050 Brussels, Belgium}
\email{kevin.piterman@vub.be}

\keywords{$p$-Subgroup Complexes; Tits Buildings; Groups of Lie Type}
\subjclass[2020]{05E18, 06A11, 20D30, 20E42, 20G40, 55U05}

\begin{abstract}
We show that the $p$-group complex of a finite group $G$ is homotopy equivalent to a wedge of spheres of dimension at most $n$ if $G$ contains a self-centralising normal subgroup $H$ which is isomorphic to a group of Lie type and Lie rank $n$ in characteristic $p$.
If in addition, every order-$p$ element of $G$ induces an inner or field automorphism on $H$, the $p$-group complex of $G$ is $G$-homotopy equivalent to a spherical complex obtained from the Tits building of $H$.

We also prove that the reduced Euler characteristic of the $p$-group complex of a finite group $G$ is non-zero if $G$ has trivial $p$-core and $H$ is a self-centralising normal subgroup of $G$ which is a group of Lie type (in any characteristic), except possibly when $p=2$ and $H=A_n(4^a)$ ($n\geq 2$) or $E_6(4^a)$.
In particular, we conclude that the Euler characteristic of the $p$-group complex of an almost simple group does not vanish for $p\geq 7$.
\end{abstract}

\maketitle

\section{Introduction}

For a finite group $G$ and a prime $p$, the Quillen poset $\Ap(G)$ consists of the non-trivial elementary abelian $p$-subgroups of $G$ ordered by inclusion.
D. Quillen introduced this poset in 1978, and established many connections between homotopical properties of its order complex (which we shall call the $p$-group complex of $G$) and intrinsic algebraic properties of $G$ (see \cite{Qui78}).
For instance, when $G$ is a finite group of Lie type in characteristic $p$, the poset $\Ap(G)$ is $G$-homotopy equivalent to the poset of proper parabolic subgroups of $G$, and therefore its homology, which is concentrated in a single degree, gives rise to the Steinberg module of $G$.

On the other hand, for an arbitrary finite group $G$, the fixed point subposet $\Ap(G)^g$ is contractible for any element $g\in G$ of order divisible by $p$.
This implies that, although in general the homology of $\Ap(G)$ might not be concentrated in a single degree, the virtual character associated with the Lefschetz module of $\Ap(G)$ (viewed, for example, as an element in the Green ring of $\QQ G$-modules) vanishes at elements of order divisible by $p$, recovering one of the crucial features of the Steinberg character. 
Motivated by these observations, one usually regards the Lefschetz module of $\Ap(G)$ as the ``mod $p$" analogue of the Steinberg module for an arbitrary finite group $G$ (cf. \cite{Webb}).

In general, the homotopy type of the $p$-group complexes is far from being completely understood.
The first computations by Quillen \cite{Qui78} suggest that $\Ap(G)$ has the homotopy type of a wedge of spheres of possibly different dimensions.
In fact, if $G$ is a solvable group, J. Pulkus and V. Welker \cite{PW}  showed that there is a wedge decomposition for $\Ap(G)$ such that if $N$ is a normal $p'$-subgroup of $G$, and $\Ap(G/N)_{>A}$ is a wedge of spheres for all $A \in \Ap(G/N)$, then $\Ap(G)$ is a wedge of spheres.
Thus, the determination of the homotopy type of the $p$-group complexes of solvable groups roughly boils down to a question about the upper-intervals.
However, for non-solvable groups, $\Ap(G)$ might not be a wedge of spheres, and J. Shareshian \cite{Shareshian} exhibited the first example: There is torsion in the homology of the Quillen poset of the alternating group on $13$ letters for $p=3$, so this complex cannot have the homotopy type of a wedge of spheres.
Furthermore, it was shown later that the failure of being a wedge of spheres can also arise from the fundamental group: in \cite{MP}, E.G. Minian and K.I. Piterman prove that the fundamental group of the Quillen poset of the alternating group on $10$ letters for $p=3$ is not a free group, but its homology is abelian free.
In this case, the homology cannot detect the failure to be a wedge of spheres, indicating that homology groups generally do not provide enough information to determine the homotopy type of the $p$-group complexes.
It is worth noting that these examples arise from simple groups.

\smallskip

In this article, we describe the homotopy type of the $p$-group complex for finite groups that contain suitable subgroups of Lie type in characteristic $p$.

Given a group $G$ and a prime $p$, we say that $H$ is an \textit{$\name_p$}-subgroup of $G$ if $H$ is a normal and self-centralising subgroup of $G$ (i.e., if $g\in G$ centralises $H$ then $g\in H$) such that $H$ is isomorphic to a finite group of Lie type in characteristic $p$.
Here by a finite group of Lie type in characteristic $p$ we mean $H = O^{p'}(\GG^F)$, where $O^{p'}(K)$ denotes the smallest normal subgroup of index prime to $p$ of a finite group $K$, and $\GG^F$ is the fixed point group of a Steinberg endomorphism $F$ of a simple algebraic group $\GG$ over an algebraically closed field of characteristic $p$.
The Lie rank of $H$ is the rank of the underlying building. 
Indeed, if such an $\name_p$-subgroup $H$ exists, then it must be unique (cf. Lemma \ref{lm:uniqueLiep}), and if further $H$ is quasisimple then $H = F^*(G)$, the generalised Fitting subgroup of $G$.

Our main result can be summarised as follows:

\begin{theorem}
\label{thm:introMain}
Let $p$ be a prime and $G$ a finite group with an $\name_p$-subgroup $H$ of Lie rank $n$.
Then $\Ap(G)$ is homotopy equivalent to a wedge of spheres of dimension at most $n$.

Moreover, if $\{E\in \Ap(G)\tq E\cap H = 1, O_p(C_H(E)) = 1\}$ is non-empty and consists only of cyclic subgroups inducing field automorphisms on $H$, then the order complex of the Bouc poset $\Bp(\Omega_1(G))$ is spherical of dimension $n$.
\end{theorem}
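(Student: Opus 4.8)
Write $n$ for the Lie rank of $H$ and let $\Delta(H)$ be the Tits building of $H$ (the order complex of the poset of proper parabolic subgroups). By Quillen's theorem $\Ap(H)\simeq_H\Delta(H)$, and by Solomon--Tits this is a wedge of $(n-1)$-spheres. Since $H$ is of Lie type in characteristic $p$ we have $O_p(H)=1$, and since $H$ is normal and self-centralising in $G$ this forces $O_p(G)=1$: if $Q=O_p(G)$ then $Q\cap H\normal H$ is a $p$-group, so $Q\cap H=1$, hence $[Q,H]=1$, $Q\leq C_G(H)\leq H$, and $Q=1$. Also, every elementary abelian $p$-subgroup of $G$ lies in $\Omega_1(G)$, so $\Ap(\Omega_1(G))=\Ap(G)$, and by Bouc's theorem $\Bp(K)\simeq\Ap(K)$ for every finite group $K$; thus the last assertion reduces to showing that $\Ap(G)$ is spherical of dimension $n$ under the stated hypothesis.

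The first step is the decomposition $\Ap(G)=\mathcal Z\sqcup\mathcal Y$, where $\mathcal Y=\{E\in\Ap(G)\tq E\cap H\neq1\}$ is a $G$-invariant filter and $\mathcal Z=\{E\in\Ap(G)\tq E\cap H=1\}$ a $G$-invariant ideal. The maps $E\mapsto E\cap H$ and the inclusion $\Ap(H)\hookrightarrow\mathcal Y$ are mutually inverse $G$-homotopy equivalences (via $E\cap H\leq E$), so $\mathcal Y\simeq_G\Ap(H)\simeq_H\Delta(H)$; the same argument, using $Z\cap H=1$ and $A\mapsto AZ$, identifies $\{Y\in\mathcal Y\tq Y>Z\}$ with $\Ap(C_H(Z))$ for each $Z\in\mathcal Z$. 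Moreover, when $O_p(C_H(Z))\neq1$ it meets $Z$ trivially and so maps onto a non-trivial normal $p$-subgroup of $C_G(Z)/Z$, which one uses to see that $\Ap(G)_{>Z}$ is contractible; hence only the $Z$ in $\mathcal Z^{\circ}:=\{Z\in\mathcal Z\tq O_p(C_H(Z))=1\}$ affect the homotopy type. Feeding this into the standard homotopy-pushout description of the order complex of an ideal/filter decomposition, $\Ap(G)$ is obtained from $\Delta(H)$ by attaching, for each $G$-orbit in $\mathcal Z^{\circ}$, the cone over the join $\Delta(\Ap(Z)_{<Z})\join\Delta(\Ap(C_H(Z)))$, in which $\Ap(Z)_{<Z}$ is the proper part of the subgroup poset of $Z$ (a wedge of spheres of dimension $\rk Z-2$; empty if $\rk Z=1$) and $\Delta(\Ap(C_H(Z)))$ is the building of $C_H(Z)$ (a wedge of spheres of dimension one less than the Lie rank of $C_H(Z)$).

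It remains to estimate $\rk Z$ plus the Lie rank of $C_H(Z)$ for $Z\in\mathcal Z^{\circ}$. Because $C_G(H)\leq H$ and $\Outdiag(H)$ has order prime to $p$, such a $Z$ embeds into the group of field and graph automorphisms of $H$, which has the form $\Phi\rtimes\Gamma$ with $\Phi$ cyclic and $|\Gamma|\leq6$; therefore $\rk Z\leq2$, with $\rk Z=2$ only when $p\in\{2,3\}$ and $H$ has type $A_\ell$, $D_\ell$ or $E_6$ (triality when $H$ has type $D_4$). The classification of automorphisms of groups of Lie type together with the Lang--Steinberg theorem then exhibits $C_H(Z)$, up to a central $p'$-extension, as a finite group of Lie type in characteristic $p$ whose Lie rank is $n$ when $Z$ is generated by one field automorphism, and at most $n-\rk Z+1$ otherwise (the extreme cases being the fixed groups of graph automorphisms: type $B_{\ell-1}$ in type $D_\ell$, type $F_4$ in type $E_6$, type $G_2$ in type $D_4$, and symplectic or unitary groups in type $A_\ell$). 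Hence $\rk Z+(\text{Lie rank of }C_H(Z))\leq n+1$ in every case, so each coned-off join has dimension at most $n-1$ and $\Ap(G)$ has dimension at most $n$. That $\Ap(G)$ is then a wedge of spheres follows by induction on the number of $G$-orbits in $\mathcal Z^{\circ}$: a join of wedges of spheres is a wedge of spheres, and the relevant attaching maps are split injective on reduced homology because the top homology of a building of this type (its Steinberg module) is a direct summand of that of any larger such complex; making this precise is the main technical point, and is where the (generalised) Pulkus--Welker wedge-decomposition machinery enters.

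For the last assertion, the hypothesis forces every $Z\in\mathcal Z^{\circ}$ to be cyclic of order $p$ inducing a field automorphism, so $C_H(Z)$ has Lie rank exactly $n$ and each attached piece is a cone over a wedge of $(n-1)$-spheres; all attached cells thus have dimension exactly $n$, and $\Ap(G)$ will be a wedge of $n$-spheres once it is shown to be $(n-1)$-connected. Since $\mathcal Y\simeq\Delta(H)$ is already $(n-2)$-connected, this amounts to showing that attaching the cones over the complexes $\Ap(C_H(Z))$, $Z\in\mathcal Z^{\circ}$, kills $\widetilde H_{n-1}$; equivalently, that the Steinberg module $\widetilde H_{n-1}(\Delta(H))$ is generated by the images of the Steinberg modules $\widetilde H_{n-1}(\Delta(C_H(Z)))$ of the relevant subfield subgroups. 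This is what I expect to be the main obstacle: it should follow from the irreducibility of the Steinberg module together with the fact that a subfield subgroup of the same Lie rank contains an apartment of the ambient building (so that at least one of these maps is non-zero, and then the generated submodule is everything), but carrying this out cleanly, while simultaneously controlling the case analysis for $C_H(Z)$ and the homological splittings of the previous paragraph, is the bulk of the argument. Non-contractibility, i.e. $\widetilde H_n(\Ap(G))\neq0$, then follows from the non-vanishing Euler-characteristic statement established elsewhere in the paper.
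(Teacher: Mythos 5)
Your overall skeleton --- realising $\Ap(G)$, up to homotopy, as the building $\Delta(H)$ with pieces attached along the fixed-point complexes of the order-$p$ outer automorphisms classified in \cite{GLS98} --- matches the paper's (which reaches it through the extended-complex machinery of \cite{PS22} rather than a Pulkus--Welker ideal/filter decomposition), but two steps as proposed would fail. First, the reduction of the second assertion to ``$\Ap(G)$ is homotopy equivalent to a wedge of $n$-spheres'' is not valid: in this paper ``spherical of dimension $n$'' means the complex \emph{has dimension} $n$ and is $(n-1)$-connected, which is not a homotopy-invariant property. The complex $\Delta\Ap(G)$ has dimension equal to the $p$-rank of $G$ minus one, in general far larger than $n$, and even $\Bp(G)$ can be too big (for $G=\Aut(\PSL_2(16))$, $p=2$, the poset $\B_2(G)$ has dimension $2$ while its homotopy type is a wedge of $1$-spheres). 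The assertion is stated for $\Bp(\Omega_1(G))$ precisely because one can prove $\dim\Bp(\Omega_1(G))=n$: under the hypothesis $\F_g=\emptyset\neq\F_f$ one gets $|\Omega_1(G)|_p=p\,|H|_p$, and the fact that $R\cap H\in\Bp(H)\cup\{1\}$ for every radical $p$-subgroup $R$ of $\Omega_1(G)$ bounds chain lengths by $1+\dim\Bp(H)$. This dimension count (Corollary \ref{coro:sphericalBp}) is entirely absent from your argument.

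Second, the decisive point for the first assertion is $(n-1)$-connectivity of $\Delta(H)$ after coning off the field-automorphism links, and irreducibility of the rational Steinberg module cannot deliver it: at best it shows that $\bigoplus_E\widetilde H_{n-1}(\Delta(H)^E,\QQ)\to\widetilde H_{n-1}(\Delta(H),\QQ)$ is onto, which kills rational $H_{n-1}$ but says nothing about torsion in integral $H_{n-1}$ or about $\pi_1$ when $n\leq 2$ --- exactly the obstructions that the paper's introduction recalls do occur for general groups. The paper instead uses the sharper form of Solomon--Tits: $\pi_{n-1}(\Delta(H),C)$ is \emph{generated} by the classes of apartments through a fixed chamber $C$, the stabiliser of $C$ in $H$ is transitive on these apartments, and one of them is pointwise fixed by an element of $\F_f$ (Lemma \ref{lm:fieldAutomorphismsAndFixedPointsBuilding}); hence every generator already lies in, and bounds in, some link, so $\pi_{n-1}$ of the extended complex vanishes (Corollary \ref{coro:spherical}). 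Relatedly, ``split injective on reduced homology'' is not by itself a mechanism for splitting off wedge summands of varying dimensions; what actually works for the graph-automorphism pieces is that their links are spherical of dimension strictly less than $n$ while the target is already $(n-1)$-connected, so the attaching maps are null-homotopic and Theorem \ref{thm:MVsequence}(2) applies. With these two repairs your route would essentially reproduce the paper's proof.
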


Here a simplicial complex of dimension $n$ is said to be spherical if it is homotopy equivalent to a wedge of spheres of dimension $n$.
Also recall that $\Omega_1(G)$ is the subgroup generated by the order-$p$ elements of $G$, so $\Ap(G) = \Ap(\Omega_1(G))$.
Indeed the Quillen poset $\Ap(G)$ is $G$-homotopy equivalent to the Bouc poset $\Bp(G)$ consisting of the radical $p$-subgroups of $G$, i.e., the non-trivial $p$-subgroups $R\leq G$ such that $O_p(N_G(R)) = R$, where $O_p(N)$ is the $p$-core of $N$, and $N_G(R)$ is the normaliser of $R$ in $G$ (see \cite{Bouc}).
Finally, $C_H(E)$ is the centraliser of $E$ in $H$.
See Theorem \ref{thm:main} for the general statement, which additionally contains an explicit description of the spheres appearing in the wedge decomposition of $\Ap(G)$.
The conclusions on the sphericity of $\Bp(\Omega_1(G))$ are contained in Corollary \ref{coro:sphericalBp}.

Moreover, Theorem \ref{thm:main} yields a formula for the reduced homology of $\Ap(G)$ with rational coefficients, denoted by $\widetilde{H}_*(\Ap(G),\QQ)$, which we briefly describe in the following corollary.
If $G$ contains an $\name_p$-subgroup $H$, then we define
\begin{align*}
    \F_f & = \{ E\in \Ap(G) \tq |E|=p \text{ and $E$ induces field automorphisms on }H\},\\
    \F_g & = \{ E\in \Ap(G) \tq |E|=p,\  O_p(C_{H}(E)) = 1, \ E\notin \F_f\}.
\end{align*}
By uniqueness of $H$ (if it exists), these sets depend exclusively on $G$ and $p$.
Note that $\F_f,\F_g$ are invariant under the conjugation action of $G$, and we write $\overline{E}$ for the conjugacy class of $E\in \F_f\cup \F_g$, and $\F_f/G$, $\F_g/G$ for the conjugacy class sets.

The elements of $\F_g$ induce ``graph" or ``graph-field" automorphisms on $H$.
In any case, if $E\in \F_f\cup\F_g$ then $O^{p'}(C_H(E))$ is a group of Lie type in characteristic $p$ of a certain Lie rank $m_E$.
For $E\in \F_g$, write $m_E^* = m_E$ if $E$ centralises some element of $\F_f$, and $m_E^*=m_E-1$ otherwise.
Denote by $G_{df}$ the pre-image by the natural map $G\to\Aut(H)$ of the subgroup $\Inndiag(H)\Phi_{H}$, where $\Inndiag(H)$ is the group of inner-diagonal automorphisms of $H$ and $\Phi_{H}$ is the full group of field automorphisms of $H$ (in the Steinberg sense \cite[Section 10]{Steinberg}, see Section \ref{sec:algebraicgroups} for more details).
Note that $G_{df}$ is a normal subgroup of $G$.

\begin{corollary}
Let $G$ be a finite group, $p$ a prime, and suppose that $H$ is an $\name_p$-subgroup of $G$ of Lie rank $n$.
Then, for $m\geq 0$ we have an isomorphism of $G$-modules:
\[ \widetilde{H}_m(\Ap(G),\QQ) \cong \widetilde{H}_m(\Ap(G_{df}),\QQ) \oplus \bigoplus_{\overline{E}\in \F_g/G} \Ind_{N_G(E)}^G\big( \widetilde{H}_{m-1}(\Ap(C_{G_{df}}(E),\QQ) \big),\]
where $\widetilde{H}_{m-1}(\Ap(C_{G_{df}}(E),\QQ) \big)\neq 0$ if and only if $m-1 = m_E^*$.
Moreover, if $\F_f = \emptyset$ then
\[ \widetilde{H}_*( \Ap(G_{df}),\ZZ) = \widetilde{H}_{n-1}(\Ap(G_{df}),\ZZ) = \widetilde{H}_{n-1}( \Ap(H),\ZZ),\]
and if $\F_f\neq\emptyset$ then $\widetilde{H}_*( \Ap(G_{df}), \ZZ) = \widetilde{H}_n( \Ap(G_{df}),\ZZ)$ is the kernel of the surjective $G$-equivariant map
\[ \bigoplus_{\overline{E}\in \F_f/G} \Ind_{N_G(E)}^G\big(\widetilde{H}_{n-1}( \Ap(C_H(E)) ,\ZZ )\big) \overset{i}{\longrightarrow} \widetilde{H}_{n-1}( \Ap(H),\ZZ),\]
where $i$ is induced by the inclusions $\Ap(C_H(E))\hookrightarrow \Ap(H)$.
\end{corollary}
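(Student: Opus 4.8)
The plan is to derive the corollary from the general structure theorem (Theorem \ref{thm:main}) and its explicit wedge decomposition of $\Ap(G)$. The starting point is the observation that $\F_f\cup\F_g$ parametrizes the order-$p$ elements $E$ with $E\cap H=1$ that contribute ``new'' spheres beyond those already present in $\Ap(H)$ — equivalently, beyond those coming from the Tits building of $H$. First I would recall from Theorem \ref{thm:main} that $\Ap(G)$ decomposes (up to $G$-homotopy) as a wedge built from $\Ap(H)$ together with, for each relevant $E$, a contribution of the form of a join or suspension of $\Ap(C_G(E))$-type pieces with the $0$-sphere of the orbit. The elements of $\F_f$ behave differently from those of $\F_g$: an $E\in\F_f$ inducing a field automorphism has $O^{p'}(C_H(E))$ of full Lie rank information but sits ``compatibly'' inside the building, so its contribution is absorbed into $G_{df}$; an $E\in\F_g$ (graph or graph-field type) contributes a genuinely new summand that is induced up from $N_G(E)$.

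**The two cases.** I would then split according to whether $\F_f=\emptyset$. When $\F_f=\emptyset$: here $G_{df}$ is generated over $H$ only by inner-diagonal automorphisms of $p'$-order (field automorphisms of order $p$ would lie in $\F_f$), so $\Omega_1(G_{df})=\Omega_1(\Inndiag(H))$-part collapses onto $H$ at the level of $\Ap$, giving $\Ap(G_{df})\simeq_G \Ap(H)$ and hence $\widetilde H_*(\Ap(G_{df}),\ZZ)=\widetilde H_{n-1}(\Ap(H),\ZZ)$, concentrated in degree $n-1$ by the Solomon--Tits theorem (the $p$-group complex of a group of Lie type in defining characteristic is Cohen--Macaulay of dimension $n-1$). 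When $\F_f\neq\emptyset$: the field automorphisms of order $p$ raise the dimension by one. Using the Mayer--Vietoris / wedge decomposition from Theorem \ref{thm:main} applied inside $G_{df}$, I would identify $\Ap(G_{df})$ up to homotopy as a homotopy pushout gluing $\Ap(H)$ with the cones over the subcomplexes $\Ap(C_H(E))$ for $E\in\F_f$; the associated long exact sequence in homology then collapses (again by Cohen--Macaulayness of each $\Ap(C_H(E))$, of dimension $n-1$, and of $\Ap(H)$) to the single exact sequence
\[
0\longrightarrow \widetilde H_n(\Ap(G_{df}),\ZZ)\longrightarrow \bigoplus_{\overline E\in\F_f/G}\Ind_{N_G(E)}^G\big(\widetilde H_{n-1}(\Ap(C_H(E)),\ZZ)\big)\overset{i}{\longrightarrow}\widetilde H_{n-1}(\Ap(H),\ZZ)\longrightarrow 0,
\]
where surjectivity of $i$ is exactly the statement that the new top-dimensional cells suffice to kill all of $\widetilde H_{n-1}(\Ap(H))$ after coning — equivalently that $\Ap(G_{df})$ has no homology below degree $n$, which follows from the sphericity assertion in Theorem \ref{thm:introMain}/Theorem \ref{thm:main}. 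The $G$-equivariance of $i$ is immediate from functoriality of the inclusions $\Ap(C_H(E))\hookrightarrow\Ap(H)$ and conjugation-compatibility.

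**Assembling the general formula.** For the first displayed isomorphism I would feed the $\F_g$-part into the picture: each $\overline E\in\F_g/G$ contributes, by the wedge decomposition, a summand $\Ind_{N_G(E)}^G\big(\widetilde H_{*-1}(\Ap(C_{G_{df}}(E)),\QQ)\big)$ — the degree shift by one coming from joining with the $0$-sphere $\{E\}\ast\{\text{rest}\}$, and the passage from $C_G(E)$ to $C_{G_{df}}(E)$ being justified because the part of $C_G(E)$ outside $G_{df}$ again contributes nothing new to $\Ap$ (same collapsing argument as in the $\F_f=\emptyset$ case, applied to $C_G(E)$). Finally, $O^{p'}(C_H(E))$ is a group of Lie type of Lie rank $m_E$ by the standard description of centralizers of graph/graph-field automorphisms (Steinberg, and the $C_G(\sigma)$-structure theory), so $\Ap(C_{G_{df}}(E))$ is concentrated in degree $m_E^\ast$ — the $-1$ adjustment when $E$ centralizes no element of $\F_f$ encoding whether an extra field-automorphism direction is available to bump the dimension. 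Working over $\QQ$ makes all the exact sequences split as $G$-modules, yielding the stated direct-sum decomposition.

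**Main obstacle.** I expect the technical heart to be the bookkeeping that identifies, inside Theorem \ref{thm:main}'s wedge decomposition, precisely which summands are absorbed into $\Ap(G_{df})$ versus split off as induced modules over $\F_g/G$, and the verification that the ``collapsing'' arguments (replacing $C_G(E)$ by $C_{G_{df}}(E)$, and $G_{df}$ by $H$ when $\F_f=\emptyset$) are valid — this rests on showing the relevant order-$p$ elements outside $G_{df}$ all lie in $\F_f\cup\F_g$ and contribute only via the already-accounted-for mechanism, which is a case analysis on automorphism types of groups of Lie type. The homological algebra (collapsing the Mayer--Vietoris sequences using Cohen--Macaulayness) is routine once the geometric decomposition is in hand.
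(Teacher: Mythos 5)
Your proposal is correct and follows essentially the same route as the paper: the corollary is a repackaging of Theorems \ref{thm:fieldCase} and \ref{thm:main}, whose proofs use exactly the two-step extended-complex description $\Ap(G)\simeq_G\big(\Delta(H)_{\F_f}\big)_{\F_g}$, the Mayer--Vietoris sequence of Theorem \ref{thm:MVsequence}, Solomon--Tits sphericity to force surjectivity of $i$ and concentration of homology in a single degree, and the null-homotopic links of $\F_g$-vertices to split off the induced summands. One small imprecision: when $\F_f=\emptyset$ the order-$p$ elements of $G_{df}$ outside $H$ need not all be field automorphisms (for twisted $H$ they may lie in $\F_c$), but these have $O_p(C_H(E))\neq 1$ and hence contractible links, so the collapse $\Ap(G_{df})\simeq\Ap(H)$ still holds exactly as in Proposition \ref{prop:extendingByNormalSubgroups}.
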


See Theorems \ref{thm:fieldCase} and \ref{thm:main}.
From this description, one can write down an expression for the Lefschetz module of $\Ap(G)$ in terms of the Steinberg modules of $H$ and the centralisers of outer automorphisms of order $p$ of $H$ included in $G$.
For instance, for the homology of $\Ap(G_{df})$, if $\St_p(H)$ denotes the Steinberg module of $H$ over the rationals then
\[ \widetilde{H}_n( \Ap(G_{df}),\QQ) = \left( \bigoplus_{\overline{E}\in \F_f/G} \Ind_{N_G(E)}^G( \St_p(C_H(E)) ) \right) \ \big/ \ \St_p(H). \]

As a corollary of the wedge decomposition for $\Ap(G)$, we also get:

\begin{corollary}
Let $G$ be a finite group, $p$ a prime, and suppose that $H$ is an $\name_p$-subgroup of $G$.
Then $\widetilde{\chi}(\Ap(G)) \neq 0$, except possibly if $p=2$, $H=A_n(4^a)$ or $E_6(4^a)$ and both $\F_f$ and $\F_g$ are non-empty.
\end{corollary}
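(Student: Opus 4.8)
The plan is to feed the homology formula of the preceding corollary (Theorems~\ref{thm:fieldCase} and~\ref{thm:main}) into the reduced Euler characteristic and reduce non-vanishing to a check on Lie ranks of centralisers of automorphisms of order $p$. Since taking alternating sums of dimensions is additive over direct sums, turns $\Ind_{N_G(E)}^G$ into multiplication by $[G:N_G(E)]$, and turns a degree shift by one into a change of sign,
\[
\widetilde\chi(\Ap(G)) \;=\; \widetilde\chi(\Ap(G_{df})) \;-\; \sum_{\overline E\in\F_g/G}[G:N_G(E)]\,\widetilde\chi(\Ap(C_{G_{df}}(E))).
\]
Each homology group here is concentrated in a single degree: $\widetilde H_*(\Ap(C_{G_{df}}(E)),\QQ)$ lies in degree $m_E^*$ and is non-zero, so $\widetilde\chi(\Ap(C_{G_{df}}(E)))=(-1)^{m_E^*}\beta_E$ with $\beta_E>0$; and $\widetilde H_*(\Ap(G_{df}),\QQ)$ lies in degree $n-1$ with value $\St_p(H)$ if $\F_f=\emptyset$, and in degree $n$ if $\F_f\neq\emptyset$, where it is the kernel of the surjection in the corollary, which is non-zero because $\dim\St_p(C_H(F))=|C_H(F)|_p<|H|_p=\dim\St_p(H)$ for any order-$p$ field automorphism $F$, so the source strictly dominates. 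Writing $d\in\{n-1,n\}$ for this degree and $\beta_0>0$ for the dimension of that group,
\[
\widetilde\chi(\Ap(G)) \;=\; (-1)^{d}\beta_0 \;+\; \sum_{\overline E\in\F_g/G}(-1)^{m_E^*+1}[G:N_G(E)]\,\beta_E ,
\]
so the task is to decide whether this alternating sum of positive integers can be zero.

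If $\F_g=\emptyset$ this is $(-1)^{d}\beta_0\neq0$, so assume $\F_g\neq\emptyset$. When $\F_f=\emptyset$ the argument is soft: $\beta_0=|H|_p$ is a bare power of $q$, while each $\F_g$-summand carries the index $[G:N_G(E)]$ (divisible by $[H:N_H(E)]$) and the factor $\beta_E$ (comparable to $|O^{p'}(C_H(E))|_p$), and the estimate $\dim H-\tfrac12(\dim C_H(E)+m_E)>\tfrac12(\dim H-n)$ shows that every $\F_g$-summand strictly exceeds $\beta_0$ in $q$-degree. Moreover, in every Dynkin type admitting a graph automorphism the centralisers of a graph and of a graph-field automorphism of order $p$ have the \emph{same} Lie rank, so with $\F_f=\emptyset$ all $\F_g$-summands carry the common sign $(-1)^{m_E^*+1}$; their sum is then a non-zero integer of $q$-degree larger than that of $\beta_0$, and $\widetilde\chi(\Ap(G))\neq0$ whatever the sign $(-1)^d$.

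So assume $\F_f$ and $\F_g$ are both non-empty; then $d=n$ and $\beta_0$ is itself inflated by the field-automorphism indices. Split the $\F_g$-summands into graph and graph-field contributions. Since a twisted group ${}^2A_m$, ${}^2D_m$, ${}^2E_6$, ${}^3D_4$ has the same number of positive roots as its untwisted form, only the graph-field summands have the $q$-degree of $\beta_0$, while the graph summands are strictly smaller and cannot cancel against $\beta_0$. It then remains to compare the sign $(-1)^{m_E^*+1}$ of a graph-field summand --- here $m_E^*=m_E$ is the twisted Lie rank, since a graph-field automorphism commutes with a field automorphism and hence centralises an element of $\F_f$ --- against $(-1)^n$: if these parities agree the terms add and $\widetilde\chi(\Ap(G))\neq0$; if they can disagree one is left with a difference of two positive integers of the same $q$-degree, which need not be zero. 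Pushing this comparison through the types admitting graph automorphisms --- $A_n$ (twisted rank $\lceil n/2\rceil$), $D_n$ (twisted rank $n-1$), $E_6$ (twisted rank $4$) at $p=2$, and $D_4$ at $p=3$ --- and refining it with the exact values of $m_E^*$, of $|C_H(F)|_p$, $|O^{p'}(C_H(E))|_p$ and of the relevant indices, should single out precisely $A_n(4^a)$ and $E_6(4^a)$ at $p=2$, with $\F_f,\F_g$ both non-empty (equivalently, with order-$2$ field and graph-field automorphisms simultaneously present), as the configurations in which a cancellation cannot be excluded.

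I expect the main obstacle to be exactly this last case analysis together with the size estimates supporting it: one needs the precise Lie ranks of centralisers of graph, field and graph-field automorphisms of order $p$ in each type (for type $A$ this includes tracking the possible forms $\mathrm{Sp}$, $\mathrm{SO}^{\pm}$ and the quasi-split unitary group of these centralisers), the precise value of $m_E^*$ (whether a given $E\in\F_g$ really centralises an element of $\F_f$ inside $G$, not merely inside $\Aut(H)$), and enough control on the indices $[G:N_G(E)]$ and on the dimensions $\beta_0$, $\beta_E$ to certify non-vanishing in every configuration outside the two exceptional families.
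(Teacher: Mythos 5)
Your setup and the Euler-characteristic formula are correct and match the paper's Corollary~\ref{coro:eulerCharFormula}: after feeding Theorems~\ref{thm:fieldCase} and~\ref{thm:main} into the alternating sum, the reduced Euler characteristic is $(-1)^{n-1}|H|_p$ minus the $\F_f$-contributions (which, together with the first term, give $(-1)^n\beta_0$) minus the $\F_g$-contributions. The case $\F_g=\emptyset$ also goes through as you say, being the content of Corollary~\ref{coro:eulerFieldCase}. From that point on, however, your argument diverges from the paper and is genuinely incomplete. The paper's key move (the ``in particular'' of Corollary~\ref{coro:eulerCharFormula}) is an \emph{arithmetic} one: find a prime $r\neq 2,3$ dividing $[G:N_G(E)]$ for every $E\in\F_f\cup\F_g$, whence $\widetilde\chi(\Ap(G))\equiv(-1)^{n-1}|H|_p\not\equiv 0\pmod r$, and then Proposition~\ref{prop:equicharacteristcEuler} produces such an $r$ (usually a Zsigmondy prime) in four explicit cases ($D_4(3^a)$, $F_4(2^{2a+1})$, $E_6(2^a)$, $A_n(2^a)$). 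This sidesteps entirely the need to compare magnitudes of the summands. Your route instead tries to prove non-vanishing by $q$-degree and sign analysis. That is a genuinely different strategy, and parts of it are sound observations (e.g.\ the fact that for a fixed $H$ the graph and graph-field centralisers in Table~\ref{tab:decreasingRanks} have the same Lie rank, so with $\F_f=\emptyset$ all $\F_g$-terms carry the same sign; and the crude inequality $\dim H-\tfrac12(\dim C_H(E)+m_E)>\tfrac12(\dim H-n)$ does hold since $\dim C_H(E)<\dim H$ and $m_E\leq n$). But these estimates are left as heuristics.

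The concrete gap: you never complete the degree comparisons into a proof. (i) The ``$q$-degree'' of $[G:N_G(E)]$ is not $\dim H-\dim C_H(E)$ on the nose --- one must control the lower-order factors $(q^i\pm1)$, the $\Outdiag$ and $\Phi$/$\Gamma$ contributions to $N_G(E)$, and especially the mixed bases: for graph-field and field automorphisms the centraliser lives over $q^{1/p}$, so the degrees you compare are half-integers in the base $q$ and the claim ``only graph-field summands share the degree of $\beta_0$'' requires explicit computation in each type. (ii) You must also count orbits: $\F_g/G$ can have several classes (e.g.\ graph and graph-field simultaneously for $A_n(q^2)$ or $E_6(q^2)$), and different classes carry different multiplicities. (iii) Most importantly, the final step --- ``pushing this comparison through the types \ldots should single out precisely $A_n(4^a)$ and $E_6(4^a)$'' --- is exactly the content of the proposition, and you state explicitly that you have not carried it out. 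Until that case analysis is actually done, with the indices and $p$-parts computed, the argument does not establish the claim. If you want to pursue the degree/sign route, the safest repair is to replace the magnitude estimates with the paper's congruence argument: exhibit, for each $H$ outside the excluded families, a prime $r\notin\{2,3\}$ (typically a Zsigmondy prime for some $q^k-1$ or $q'^k-1$) that divides $[G:N_G(E)]$ for every $E\in\F_f\cup\F_g$; this reduces the whole computation to the divisibility of the quantities $y_G/y_E$ as in the paper's Cases 1--4 and avoids the leading-coefficient bookkeeping your sketch would otherwise need.
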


Here $\widetilde{\chi}(\Ap(G))$ denotes the reduced Euler characteristic of $\Ap(G)$.
Indeed, the excluded cases in the above corollary may be ruled out by performing some extra (but lengthier) computations according to the possibilities of $G$ and the formula we get for the homology of $\Ap(G)$.
See Corollary \ref{coro:euler}.


\bigskip

The results on the homotopy type of the Quillen complex of such a family of groups $G$ are obtained by basically showing that $\Ap(G)$ is homotopy equivalent to the Tits building of the $\name_p$-subgroup $H$ after adding new vertices with highly-connected links, which are basically the Tits buildings of the centralisers in $H$ of the elements in $\F_f\cup \F_g$.
We provide preliminaries on more theoretic homotopy results in Section \ref{sec:simplicialComplexes}, and an Appendix \ref{sec:appendix} with some classical theorems of algebraic topology to compute homotopy types.
In Section \ref{sec:psubgroups}, we set the notation related to finite groups, and exhibit homotopy equivalences between different $p$-subgroup complexes, not always obtained from standard $p$-subgroup posets.
Then, in Section \ref{sec:buildings}, we briefly state some intermediate consequences of these methods on buildings.
In Section \ref{sec:algebraicgroups}, we discuss preliminaries on finite groups of Lie type, adopting mainly the language of \cite{MalleTesterman} but also using \cite{GLS98}, and prove the main result of this article Theorem \ref{thm:main}.
Finally, Section \ref{sec:euler} contains computations with the Euler characteristic of the Quillen poset, and hence the proof of Corollary \ref{coro:euler}.
In view of these results, we also conjecture that $\widetilde{\chi}(\Ap(G))\neq 0$ for any almost simple group $G$ and prime $p$ (this is a stronger reformulation of the original Quillen's conjecture which states that $\Ap(G)$ is non-contractible if $O_p(G) = 1$, see \cite{Qui78}).
To support this conjecture, we establish the non-vanishing of the reduced Euler characteristic of the Quillen poset of an almost simple group $G$ such that none of the following cases holds: $F^*(G)$ is either a sporadic group with $p=2,3,5$, an alternating group with $p=2,3$, or $A_n(4^a)$ ($n\geq 2$), $E_6(4^a)$ with $p=2$ and $\F_f,\F_g$ non-empty.
The completeness of this list depends on the classification of the finite simple groups.
See Corollary \ref{coro:eulerAlmostSimple}.

\smallskip

Computer calculations were performed in \cite{Gap} with package \cite{FPSC}.

\bigskip

\textbf{Acknowledgements.}
I am very grateful to Gunter Malle for the many discussions on algebraic groups that enriched my understanding of this topic and for his numerous comments on an earlier version of this article that substantially improved its presentation.
I would also like to thank Stefan Witzel for their insightful conversations on buildings that motivated this work, and Volkmar Welker for his comments on a preliminary version.

The author was supported by a Postdoctoral Fellowship granted by the Alexander von Humboldt Stiftung, the FWO grant 12K1223N, the FWO Senior Research Project G004124N, and project OZR3762 of Vrije Universiteit Brussel.

\section{Preliminaries on simplicial complexes}
\label{sec:simplicialComplexes}

In this section, we set some notation and provide results on the homotopy type of particular families of simplicial complexes.
These results mostly rely on classical theorems of algebraic topology, which we include in the Appendix \ref{sec:appendix}.

Suppose $K$ and $L$ are (abstract) simplicial complexes.
We denote by $K*L$ the simplicial join of $K$ and $L$.
Recall this is a simplicial complex whose vertex set equals the disjoint union of the set of vertices of $K$ and $L$, and there is a simplex $\sigma\cup\tau$ for every $\sigma\in K$ and $\tau\in L$ (including the empty simplex).
If $|K|$ denotes the geometric realisation of $K$, then $|K*L|$ is homeomorphic to the classical topological join $|K|*|L|$.
We will use the symbol $\simeq$ to denote homotopy equivalences between posets, complexes or topological spaces.
The symbol $\cong$ will be mostly used to denote isomorphisms.
A subscript $\simeq_G$ or $\cong_G$ indicates $G$-equivariant homotopy equivalence or $G$-isomorphism respectively, where $G$ is a group.

Suppose that $L$ is a subcomplex of $K$, which we write $L\leq K$.
We say that $L$ is a full subcomplex of $K$ if every simplex of $K$ whose vertices lie $L$ is already contained in $L$.
We write $K\setminus L$ for the full subcomplex of $K$ whose simplices do not contain vertices in $L$.
If $\sigma \in K$ is a simplex, the link of $\sigma$ in $L$ is the subcomplex $\Lk_L(\sigma) = \{\tau\in L\tq \tau\cup\sigma\in K, \  \tau\cap\sigma=\emptyset\}$.
The star of $\sigma$ in $L$ is the subcomplex $\St_L(\sigma)= \{\tau\in L\tq \tau\cup\sigma\in K\}$.
Note that $\St_K(\sigma) = \Lk_K(\sigma)*\sigma = \St_{K\setminus \sigma}(\sigma)*\sigma$, where here we regard $\sigma$ as a simplicial complex whose simplices are the faces of $\sigma$.

If $G$ is a group, we say that $K$ is a $G$-complex if $G$ acts on $K$ by simplicial automorphisms.
We denote by $\Stab_G(\sigma)$ the stabiliser in $G$ of a simplex $\sigma\in K$, and by $K^G$ the subcomplex of pointwise fixed simplices of $K$.

We will need the following result that describes the homotopy type of a simplicial complex $K$ from a full subcomplex $L$ such that $K\setminus L$ is a discrete complex, that is, its simplices have dimension at most $0$.

\begin{theorem}
\label{thm:MVsequence}
Let $K$ be a $G$-complex and let $L$ be a full $G$-invariant subcomplex such that $K\setminus L$ is discrete.
Denote by $V$ the set of vertices of $K\setminus L$.
Then we have a pushout in the geometric realisations:
\begin{equation}
\label{eq:pushoutKL}
\begin{aligned}
 \xymatrix{
\coprod_{v\in V} |\Lk_L(v)| \ar@{^(->}[d] \ar[r] & |L| \ar[d]\\
\coprod_{v\in V} |\Lk_L(v)*\{v\}| \ar[r] & |K|
}
\end{aligned}
\end{equation}
Moreover, the following hold:
\begin{enumerate}
    \item There is a long exact sequence in homology with $G$-equivariant maps
    \[ \ldots \to \widetilde{H}_{m+1}(K,\ZZ) \to \bigoplus_{v\in V} \widetilde{H}_m( \Lk_L(v),\ZZ)  \overset{j}{\longrightarrow} \widetilde{H}_m(L,\ZZ) \overset{i}{\longrightarrow} \widetilde{H}_m(K,\ZZ)\to \ldots \]
    where the maps $i,j$ are induced by the corresponding inclusions.

    \item If for each vertex $v\in V$ the inclusion $\Lk_L(v)\hookrightarrow L$ is homotopic to a constant map on a vertex $v^*\in L$, then $K$ is homotopy equivalent to the wedge
    \[ K \simeq L \vee \bigvee_{v\in V} \Lk_L(v)* S_v\]
    where $S_v$ is the discrete complex on vertices $v$ and $v^*$, and $v^*\in L$ is identified with $v^{*} \in S_v \leq \Lk_L(v)*S_v$.

    Moreover, if $G$ is finite, $R$ is a unit ring, and $RG$ is semisimple, then we have an isomorphism of $RG$-modules
    \[ \widetilde{H}_m(K,R) \cong \widetilde{H}_m(L,R) \oplus \bigoplus_{\overline{v}\in V/G} \Ind_{\Stab_G(v)}^G \big( \, \widetilde{H}_{m-1}(\Lk_L(v),R)\, \big),\]
    for all $m\geq 0$.    
    \item If $\Lk_L(v)$ is $\Stab_G(v)$-contractible for all $v \in V$ then $K\simeq_G L$.
\end{enumerate}
\end{theorem}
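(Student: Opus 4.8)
The plan is to present $|K|$ as obtained from $|L|$ by attaching cones on the links $\Lk_L(v)$, $v\in V$, and then to read each of the three conclusions off the resulting (homotopy) pushout.

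\emph{The geometric picture.} Because $K\setminus L$ is discrete, every simplex of $K$ contains at most one vertex of $V$, and because $L$ is full, every simplex all of whose vertices lie in $L$ already lies in $L$. Hence for $v\in V$ one has $\Lk_K(v)=\Lk_L(v)\subseteq L$, so that $\St_K(v)=\Lk_L(v)\join\{v\}$ is the cone on $\Lk_L(v)$; moreover $K=L\cup\bigcup_{v\in V}\St_K(v)$, with $\St_K(v)\cap L=\Lk_L(v)$ and $\St_K(v)\cap\St_K(w)\subseteq\Lk_L(v)\cap\Lk_L(w)\subseteq L$ for $v\ne w$. Thus $K$ is the union of the subcomplexes $L$ and $\{\St_K(v)\}_{v\in V}$ glued along the intersections just described, so realising gives the pushout square \eqref{eq:pushoutKL}; since the left map is an inclusion of subcomplexes, hence a cofibration, this is also a homotopy pushout. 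All maps are $G$-equivariant, as $L$ and $V$ are $G$-invariant and $G$ permutes the stars compatibly with their cone vertices.

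\emph{Part (1) and the homology formula in (2).} I would take the long exact homology sequence of the pair $(K,L)$. By excision (equivalently, using $K/L=\bigvee_{v\in V}\St_K(v)/\Lk_L(v)=\bigvee_{v\in V}\Sigma\Lk_L(v)$, the unreduced suspensions wedged at the image of $L$) there is a $G$-equivariant isomorphism $H_m(K,L)\cong\bigoplus_{v\in V}\widetilde H_{m-1}(\Lk_L(v))$; going through this wedge, rather than the disjoint union of cones in \eqref{eq:pushoutKL}, is what keeps reduced homology additive. Plugging this into the reduced sequence of $(K,L)$ and using naturality of the connecting map with respect to the maps of pairs $(\St_K(v),\Lk_L(v))\to(K,L)$ identifies the maps with those induced by the inclusions $\Lk_L(v)\hookrightarrow L$ and $L\hookrightarrow K$, which is (1). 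If each inclusion $\Lk_L(v)\hookrightarrow L$ is null-homotopic then $j=0$, so (1) splits into short exact sequences $0\to\widetilde H_m(L)\to\widetilde H_m(K)\to\bigoplus_v\widetilde H_{m-1}(\Lk_L(v))\to0$ of $RG$-modules; semisimplicity of $RG$ makes them split, and grouping the sum over a $G$-orbit of $v$ turns it into $\Ind_{\Stab_G(v)}^G\widetilde H_{m-1}(\Lk_L(v),R)$, giving the claimed module isomorphism.

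\emph{The wedge in (2) and part (3).} For the homotopy statement of (2) I would rebuild $K$ from $L$ one cone at a time: attach the cone $\St_K(v)=\Lk_L(v)\join\{v\}$ along $\Lk_L(v)\hookrightarrow L$, a null-homotopic map there and hence in the growing space, and invoke the standard fact (Appendix~\ref{sec:appendix}) that the mapping cone of a null-homotopic map $A\to Y$ is $Y\vee\Sigma A$; inducting over $V$ yields $K\simeq L\vee\bigvee_{v\in V}\Sigma\Lk_L(v)$, and writing $\Sigma\Lk_L(v)=\Lk_L(v)\join S_v$ with suspension points $v,v^*$, wedged at $v^*\in L$, is precisely the asserted decomposition. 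For (3) I would keep \eqref{eq:pushoutKL}: the vertex $v$ is $\Stab_G(v)$-fixed, so $\St_K(v)=\Lk_L(v)\join\{v\}$ is $\Stab_G(v)$-contractible and the collapse $\St_K(v)\to\{v\}$ is a $\Stab_G(v)$-homotopy equivalence, and by the hypothesis of (3) so is the composite $\Lk_L(v)\hookrightarrow\St_K(v)\to\{v\}$; summing over $V$, the left column of \eqref{eq:pushoutKL} becomes objectwise $G$-homotopy equivalent to the identity of the discrete $G$-set $\{v:v\in V\}$. Since homotopy pushouts are invariant under objectwise $G$-homotopy equivalence of diagrams (Appendix~\ref{sec:appendix}), the homotopy pushout reduces to $|L|$, so $K\simeq_G L$.

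\emph{Expected main obstacle.} The two delicate points are: verifying that \eqref{eq:pushoutKL} really is a pushout — this is exactly where both hypotheses are needed, fullness of $L$ and discreteness of $K\setminus L$ ensuring that the stars cover $K$ and meet $L$ precisely in the links — and, in (3), promoting the non-equivariant ``collapse the cone'' picture to a $G$-statement: no contraction point of $\Lk_L(v)$ can be chosen $\Stab_G(v)$-equivariantly, so the argument must run through $G$-homotopy invariance of the homotopy pushout rather than through an explicit equivariant deformation retraction. Part (1)'s only subtlety is that reduced homology is not additive over disjoint unions, which is sidestepped by working with the pair $(K,L)$ and the wedge $K/L$.
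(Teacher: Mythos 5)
Your proposal is correct, and the overall strategy --- present $|K|$ as $|L|$ with cones on the links attached, then read the three items off the (homotopy) pushout --- is the same as the paper's. The differences are in which standard tools you substitute at each step. For (1) you use the long exact sequence of the pair $(K,L)$ together with the identification $K/L\simeq\bigvee_{v}\Sigma\Lk_L(v)$, whereas the paper runs Mayer--Vietoris on the open cover by $\bigcup_v(|\St_K(v)|\setminus|\Lk_K(v)|)$ and $|K|\setminus V$; these are interchangeable, and your remark that one must pass through the wedge rather than the disjoint union to keep reduced homology additive is exactly the right point of care. For the wedge in (2) you attach cones one at a time and use that the mapping cone of a null-homotopic map is $Y\vee\Sigma A$, while the paper replaces all attaching maps by constants simultaneously via \cite[Prop.~0.18]{Hatcher}; your induction tacitly needs $V$ finite (or a colimit argument), which is harmless in all applications but worth noting since the statement does not assume it. The most genuine divergence is in (3): the paper deduces it from Proposition~\ref{prop:linksHighlyConnected}, i.e.\ from Quillen's fibre theorem applied to face posets, whereas you collapse each cone to its $\Stab_G(v)$-fixed apex and invoke $G$-homotopy invariance of the homotopy pushout. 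Your route is more geometric and stays inside the pushout picture; its one loose end is that the appendix only states the Gluing Lemma~\ref{lm:gluing} non-equivariantly, so you would need to upgrade it by restricting to $H$-fixed subspaces for each $H\leq G$ and applying the equivariant Whitehead Theorem~\ref{thm:whitehead} --- precisely the device the paper uses in the proof of Proposition~\ref{prop:extendingBySubgroups}. The paper's route, by contrast, reuses a proposition it needs elsewhere anyway. Both are sound.
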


\begin{proof}
The pushout in Eq. (\ref{eq:pushoutKL}) is a standard construction.
Note that for $v\in V$ we have $\St_K(v) = \Lk_L(v) * \{v\}$, i.e., $\Lk_K(v) = \Lk_L(v)$.

The long exact sequence in item (1) follows by applying the Mayer-Vietoris Theorem to the cover of $|K|$ by the open sets $A = \bigcup_{v\in V} |\St_K(v)|\setminus |\Lk_K(v)|$, and $B = |K| \setminus V$.
Note that the inclusion $|L|\hookrightarrow B$ is a homotopy equivalence.
Moreover, $A$ is the disjoint union of the open sets $|\St_K(v)|\setminus |\Lk_K(v)|$, for $v\in V$, and each of these open sets is contractible.
This sequence can also be obtained by suitably applying Theorem 2.5 of \cite{SW} to the face poset of $K$ with $M$ there being our set of vertices $V$, where the equivariant property of the maps is also justified.

Item (2) follows by noting that in the pushout of Eq. (\ref{eq:pushoutKL}), we can change the arrow $\coprod_{v\in V} |\Lk_L(v)| \to |L|$ by the map induced by the constant maps $x\in |\Lk_L(v)| \mapsto v^*$, for $v\in V$, and since these maps are homotopy equivalent, the pushouts are homotopy equivalent (see Proposition 0.18 of \cite{Hatcher}).

The $G$-module isomorphism follows from the long exact sequence in item (1) with coefficients in $R$ since the zero maps $\widetilde{H}_*(\Lk_L(v),R) \to \widetilde{H}_*(L,R)$ give rise to short exact sequences and they all $R G$-split by semisimplicity.

Finally, item (3) follows from Proposition \ref{prop:linksHighlyConnected}.
\end{proof}

Denote by $S^n$ the (topological) sphere of dimension $n$.
A simplicial complex $K$ is said to be spherical if it has dimension $n\geq -1$ and its homotopy groups of degree $\leq n-1$ vanish.
Equivalently, $K$ is homotopy equivalent to a wedge of spheres of dimension $n$ (see Theorem \ref{thm:spherical}).

\begin{corollary}
\label{coro:spherical}
Let $K$ be a simplicial complex and $L$ a full subcomplex such that:
\begin{enumerate}
    \item $L$ is spherical of dimension $n-1$,
    \item $K\setminus L$ is a discrete set of vertices $V$ such that $\Lk_L(v)$ is spherical of dimension $n-1$ for all $v\in V$,
    \item for a fixed simplex $C\in L$, there are $(n-1)$-dimensional subcomplexes $(S_i)_{i\in I}$ such that $S_i\simeq S^{n-1}$ and $C\in S_i$ for all $i\in I$, and the homotopy classes $[S_i]$ generate the homotopy group $\pi_{n-1}(L,C)$, and
    \item for all $i\in I$, there exists $v\in V$ such that $S_i\leq \Lk_L(v)$.
\end{enumerate}
Then $K$ is spherical of dimension $n$.
\end{corollary}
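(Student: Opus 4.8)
The plan is to apply Theorem \ref{thm:MVsequence}(2) to build the wedge decomposition, and then to check connectivity by hand to upgrade ``wedge of spheres'' to ``spherical of dimension $n$''. First I would verify the hypothesis of Theorem \ref{thm:MVsequence}(2): for each $v\in V$, the inclusion $\Lk_L(v)\hookrightarrow L$ must be null-homotopic. Since $\Lk_L(v)$ is spherical of dimension $n-1$ by (2), it is $(n-2)$-connected, so the inclusion is determined up to homotopy by the induced map on $\pi_{n-1}$ (after choosing a basepoint; one can take a vertex of $C$, which lies in every $S_i$ and, by (4), in the relevant links — but one must be slightly careful that $C$ need not lie in every $\Lk_L(v)$, so I would instead argue that since $L$ is simply connected when $n\geq 2$ the basepoint is immaterial, and handle $n=1$ separately). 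The key point is (3)–(4): the homotopy classes $[S_i]$ generate $\pi_{n-1}(L,C)$, and each $S_i$ factors through some $\Lk_L(v)$. This does not immediately give that a single $\Lk_L(v)\hookrightarrow L$ is null-homotopic — rather it tells us that the union of the images of all the $\pi_{n-1}(\Lk_L(v))$ is all of $\pi_{n-1}(L)$. So the honest approach is: first use Theorem \ref{thm:MVsequence}(2) to get $K\simeq L\vee\bigvee_v \Lk_L(v)*S_v$ — wait, that needs each inclusion null-homotopic, which we may not have.

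Let me restructure. Rather than (2) of the theorem, I would use the pushout \eqref{eq:pushoutKL} and the long exact sequence \eqref{eq:pushoutKL}(1) directly. From the pushout, $|K|$ is obtained from $|L|$ by coning off each $|\Lk_L(v)|$. Since each $\Lk_L(v)$ is $(n-2)$-connected and $L$ is $(n-2)$-connected, a Mayer--Vietoris / van Kampen argument on the pushout shows $K$ is $(n-2)$-connected, and $\dim K = n$ (the cone on an $(n-1)$-complex has dimension $n$, and $\dim L = n-1$). By Theorem \ref{thm:spherical} (the cited characterisation of spherical complexes), a complex of dimension $n$ that is $(n-2)$-connected — i.e. has vanishing homotopy in degrees $\le n-2$ — is spherical of dimension $n$ \emph{provided} $\pi_{n-1}(K)=0$ as well. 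So the crux is exactly the vanishing of $\pi_{n-1}(K)$.

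For that I would use the long exact sequence in item (1) of Theorem \ref{thm:MVsequence} together with the Hurewicz theorem (valid here since everything below degree $n-1$ is connected): $\pi_{n-1}(K)\cong \widetilde H_{n-1}(K,\ZZ)$ for $n\geq 3$, and one checks the low-dimensional cases $n=1,2$ separately using van Kampen. The sequence reads $\bigoplus_v \widetilde H_{n-1}(\Lk_L(v))\xrightarrow{j}\widetilde H_{n-1}(L)\xrightarrow{i}\widetilde H_{n-1}(K)\to \bigoplus_v \widetilde H_{n-2}(\Lk_L(v))=0$, so $\widetilde H_{n-1}(K)=\operatorname{coker}(j)$, and $i$ is induced by inclusion. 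Now I invoke (3)–(4): the classes $[S_i]$ generate $\pi_{n-1}(L,C)$, hence (by Hurewicz, or directly since $L$ is spherical so $\pi_{n-1}(L)\to \widetilde H_{n-1}(L)$ is onto) their images generate $\widetilde H_{n-1}(L,\ZZ)$; and each $[S_i]$ comes from some $\widetilde H_{n-1}(\Lk_L(v),\ZZ)$ via the inclusion, i.e.\ lies in the image of $j$. Therefore $j$ is surjective, $\operatorname{coker}(j)=0$, and $\widetilde H_{n-1}(K)=0$. Combined with $(n-2)$-connectedness and $\dim K=n$, Theorem \ref{thm:spherical} gives that $K$ is spherical of dimension $n$.

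The main obstacle I anticipate is the bookkeeping around the transition between homotopy and homology in degree $n-1$ — one needs $K$ to already be simply connected (or at least have the right fundamental group) before Hurewicz applies, which forces a genuinely separate treatment of $n=1$ and $n=2$ via van Kampen on the pushout \eqref{eq:pushoutKL}; and one must make sure the generation statement in (3) is being used at the level of the homology classes of the $\Lk_L(v)$'s and not merely at the level of $L$. A secondary nuisance is checking that $\dim(K\setminus L)\le 0$ really does give $\dim K=n$ and not something larger — this is immediate since every simplex of $K$ is a simplex of $L$ or a simplex of $\Lk_L(v)$ joined to a single vertex $v$, so dimensions are at most $(n-1)$ and $(n-1)+1=n$ respectively.
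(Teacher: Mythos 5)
Your proposal is correct in substance, and you rightly identified the one trap: Theorem \ref{thm:MVsequence}(2) does not apply because the individual inclusions $\Lk_L(v)\hookrightarrow L$ need not be null-homotopic. But your workaround is more roundabout than the paper's. The paper invokes Proposition \ref{prop:linksHighlyConnected} (with parameter $n-1$) to conclude that $L\hookrightarrow K$ is an $(n-1)$-equivalence, hence $\pi_{n-1}(L,C)\to\pi_{n-1}(K,C)$ is surjective; it then observes that each generator $[S_i]$ lies in some $\Lk_L(v)\leq \St_K(v)=\Lk_L(v)*\{v\}$, which is a cone in $K$, so $[S_i]$ becomes trivial in $\pi_{n-1}(K,C)$. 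This kills $\pi_{n-1}(K)$ directly at the level of homotopy groups, uniformly in $n$, with no Hurewicz argument, no Mayer--Vietoris sequence, and no separate treatment of $n=1,2$. Your route --- passing to $\widetilde H_{n-1}$ via the long exact sequence of the pushout, showing $j$ is surjective because the Hurewicz images of the $[S_i]$ generate $\widetilde H_{n-1}(L)$ and each factors through some $\Lk_L(v)$, and then converting back to homotopy --- does work, but it is precisely this back-and-forth between homotopy and homology that forces the simple-connectivity caveats and the low-dimensional case analysis you flag at the end; the cone observation makes all of that unnecessary. (Your dimension count $\dim K=n$ and your identification of where hypothesis (4) enters are both exactly right.)
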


\begin{proof}
By Proposition \ref{prop:linksHighlyConnected}, the map $i:L\hookrightarrow K$ is an $(n-1)$-equivalence since the links $\Lk_L(v)$, $v\in V$, are $(n-1)$-spherical by hypothesis.
Thus we have an epimorphism $\pi_{n-1}(L,C)\to \pi_{n-1}(K,C)$, where $C$ is as in item (3) of the hypotheses.

On the other hand, $\pi_{n-1}(L,C)$ is spanned by the classes of the subcomplexes $S_i$, which in fact lie in the links of the vertices $v\in V$.
Therefore, $[S_i]$ becomes the trivial element when regarded in $\pi_{n-1}(K,C)$, showing that $\pi_{n-1}(K,C)$ is the trivial group.
Hence $K$ is $(n-1)$-connected and $n$-dimensional, that is, $K$ is spherical of dimension $n$.
\end{proof}

We define now some particular simplicial complexes that look like those in the previous theorem.

\begin{definition}
    \label{def:complexExtended}
    Let $L$ be a simplicial complex and let $\F$ be a collection of groups acting on $L$.
    The \textit{$\F$-extended complex} $L_\F$ is the simplicial complex whose vertex set is the (disjoint) union of $\F$ and the vertex set of $L$, and simplices are as follows.
    Every simplex of $L$ is a simplex, and if $E\in \F$ fixes a simplex $\sigma\in L$ pointwise, then $\sigma \cup \{E\}$ is a simplex.
    That is, for $E\in \F$, $\Lk_L(E) = \Lk_{L_{\F}}(E) = L^E$ and $\St_{L_\F}(E) = L^E * \{E\}$.

    We say that $L$ and $\F$ have a \textit{compatible action by $G$} if $L$ is a $G$-complex, $\F$ is a $G$-set, and the action of $G$ on the vertices of $L_{\F}$ induces a simplicial action on this complex.    
\end{definition}

We will usually describe the homotopy type of $L_{\F}$ by using Theorem \ref{thm:MVsequence} since $L$ is a full subcomplex of $K = L_{\F}$ such that $K\setminus L = \F$ is a discrete set of vertices.

Recall that a simplicial action of a group $G$ on a simplicial complex $K$ is said to be admissible if every time an element $g\in G$ fixes a simplex $\sigma\in K$, then $g$ fixes $\sigma$ pointwise.

The following result is almost immediate from the Gluing Lemma \ref{lm:gluing} and the properties of a $G$-homotopy equivalence between $G$-complexes with admissible actions.

\begin{proposition}
\label{prop:extendingBySubgroups}
Let $L_1,L_2$ be two simplicial complexes with an admissible action of a group $G$ such that $L_1\simeq_G L_2$.
Let $\F$ be a family of subgroups of $G$.
Then ${L_1}_{\F}\simeq {L_2}_{\F}$.
Moreover, if $\F$ is closed under $G$-conjugation, then ${L_1}_{\F}\simeq_G {L_2}_{\F}$.
\end{proposition}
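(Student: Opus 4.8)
The plan is to realise each $\F$-extended complex as a pushout via Theorem~\ref{thm:MVsequence}, transport the $G$-homotopy equivalence $L_1\simeq_G L_2$ along it with the Gluing Lemma~\ref{lm:gluing}, and — for the equivariant statement — verify the hypotheses of the equivariant Whitehead theorem on fixed-point subcomplexes. Concretely: set $K_i={L_i}_{\F}$ for $i=1,2$. By Definition~\ref{def:complexExtended}, $L_i$ is a full subcomplex of $K_i$, the difference $K_i\setminus L_i=\F$ is a discrete set of vertices, and $\Lk_{L_i}(E)=L_i^E$, $\St_{K_i}(E)=L_i^E\join\{E\}$ for $E\in\F$. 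Hence Theorem~\ref{thm:MVsequence} presents $|K_i|$ as the pushout of the span
\[
|L_i|\ \longleftarrow\ \coprod_{E\in\F}|L_i^E|\ \hookrightarrow\ \coprod_{E\in\F}|L_i^E\join\{E\}|,
\]
where the left-hand map is induced by the inclusions $L_i^E\hookrightarrow L_i$ and the right-hand map, a cofibration, is the inclusion of the base of each cone.

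Next, I would fix a $G$-homotopy equivalence $\varphi\colon|L_1|\to|L_2|$ with $G$-homotopy inverse $\psi$. Since the actions are admissible, $|L_i^E|=|L_i|^E$ for every subgroup $E\le G$, and $\varphi$, being $G$-equivariant, restricts to a map $\varphi^E\colon|L_1^E|\to|L_2^E|$; restricting the $G$-homotopies witnessing $\varphi\psi\simeq\id$ and $\psi\varphi\simeq\id$ shows $\varphi^E$ is a homotopy equivalence with inverse $\psi^E$. Extending $\varphi^E$ by sending the cone vertex $E$ to $E$ gives a map $|L_1^E\join\{E\}|\to|L_2^E\join\{E\}|$, again a homotopy equivalence (the cone of a homotopy equivalence is one). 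These restrictions, together with $\varphi$ itself, assemble into a morphism between the two pushout spans above whose three vertical arrows are homotopy equivalences and whose right-hand horizontal arrows are cofibrations; by the Gluing Lemma~\ref{lm:gluing} the induced map $\widetilde\varphi\colon|K_1|\to|K_2|$ of pushouts is a homotopy equivalence, i.e. ${L_1}_{\F}\simeq{L_2}_{\F}$.

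For the equivariant refinement, assume $\F$ is closed under $G$-conjugation, so $G$ permutes $\F$ with $\Stab_G(E)=N_G(E)$ and $g\cdot L_i^E=L_i^{gEg^{-1}}$ for all $g\in G$. Then $\coprod_E\varphi^E$ and its cone extension are $G$-equivariant, the morphism of pushout spans is $G$-equivariant, and $\widetilde\varphi$ is a $G$-map; build $\widetilde\psi$ from $\psi$ the same way. To conclude ${L_1}_{\F}\simeq_G{L_2}_{\F}$ I would invoke the equivariant Whitehead theorem: it suffices that $\widetilde\varphi^H\colon|K_1|^H\to|K_2|^H$ be a homotopy equivalence for each subgroup $H\le G$. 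By admissibility $|K_i|^H=|K_i^H|$; and a simplex $\sigma\cup\{E\}$ of $K_i$ (with $\sigma\in L_i$, $E\in\F$) is fixed pointwise by $H$ exactly when $\sigma\in L_i^H$ and $H\le N_G(E)$. Hence $K_i^H$ is again of the form treated by Theorem~\ref{thm:MVsequence}, with base $L_i^H$, extra vertices $\F_H:=\{E\in\F\tq H\le N_G(E)\}$, and $\St_{K_i^H}(E)=(L_i^E\cap L_i^H)\join\{E\}=L_i^{\langle E,H\rangle}\join\{E\}$; moreover $\widetilde\varphi^H$ is exactly the map of pushouts induced by $\varphi^H$ on the base and by the cone of $\varphi^{\langle E,H\rangle}$ on the $E$-th piece. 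Since $\varphi$ is $\langle E,H\rangle$-equivariant, $\varphi^{\langle E,H\rangle}$ is a homotopy equivalence, so the Gluing Lemma~\ref{lm:gluing} shows once more that $\widetilde\varphi^H$ is a homotopy equivalence.

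The only step requiring genuine care is this last one — checking that the $H$-fixed subcomplex of an $\F$-extended complex is again of the shape handled by Theorem~\ref{thm:MVsequence}, with the links as claimed, and that $\widetilde\varphi^H$ is the corresponding induced map of pushouts. Everything else is formal: disjoint unions and cones preserve homotopy equivalences, cone inclusions are cofibrations, a $G$-homotopy equivalence restricts to homotopy equivalences on all fixed-point sets, and admissibility identifies $|L^E|$ with $|L|^E$. Alternatively, if an equivariant form of the Gluing Lemma~\ref{lm:gluing} is available, the fixed-point bookkeeping can be bypassed entirely: one only checks that $\coprod_E\varphi^E$ and its cone extension are $G$-homotopy equivalences — each orbit summand is an $N_G(E)$-homotopy equivalence induced up from $N_G(E)$ to $G$ — and applies it directly.
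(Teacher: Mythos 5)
Your proof is correct and follows essentially the same route as the paper's: present each $\F$-extended complex as the pushout from Theorem \ref{thm:MVsequence}, transfer the equivalence along the morphism of spans via the Gluing Lemma \ref{lm:gluing}, and for the equivariant statement identify $K_i^H$ with $(L_i^H)_{\F^H}$ and the relevant links with $L_i^{\langle E,H\rangle}$ before applying the Gluing Lemma and the equivariant Whitehead theorem again. The only cosmetic difference is that you obtain the equivalences on fixed-point sets by restricting the $G$-homotopies directly, where the paper cites the equivariant Whitehead theorem.
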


\begin{proof}
Let $\phi:|L_1|\to |L_2|$ be a $G$-homotopy equivalence.
For $H\leq G$, we have $|L_i|^H = |L_i^H|$ since the action is admissible.
Therefore, $\phi$ restricts to a homotopy equivalence $|L_1^H|\to |L_2^H|$ by the equivariant Whitehead Theorem \ref{thm:whitehead}.
Let $K_i = {L_i}_{\F}$ and $E\in \F$.
Then $\Lk_{L_i}(E) = L_i^E$.
By Theorem \ref{thm:MVsequence} and the Gluing Lemma \ref{lm:gluing}, we have a homotopy equivalence of the pushouts $\overline{\phi}:|K_1|\to |K_2|$.

Moreover, we claim that this is a $G$-homotopy equivalence.
Indeed, we must prove that $\overline{\phi}$ is $G$-equivariant and that it induces homotopy equivalences between the fixed point subspaces.

The $G$-equivariant property follows from $\phi$ and since $L_i,\F$ have a compatible $G$-action.
We analyse the fixed points.
Let $H\leq G$.
Then ${K_i}^H = (L_i^H)_{\F^H}$, where $\F^H = \{E\in\F\tq H \text{ normalises } E\}$.
For $E\in \F^H$, $\Lk_{L_i^H}(E) = L_i^{\gen{H,E}}$, and $\phi$ restricts again to a homotopy equivalence $L_1^{\gen{H,E}}\to L_2^{\gen{H,E}}$.
Thus, by the Gluing Lemma again, the induced map $\overline{\phi}_H : |K_1|^H \to |K_2|^H$ is a homotopy equivalence.
\end{proof}

\section{Homotopy equivalences between \texorpdfstring{$p$}{p}-subgroup complexes}
\label{sec:psubgroups}

We show some homotopy equivalences between different posets of $p$-subgroups whose order relations are not always given by set-theoretic inclusion, and related complexes of $p$-subgroups constructed by using Definition \ref{def:complexExtended}.

Recall that if $X$ is a poset, then the order complex of $X$, denoted by $\Delta(X)$, is the simplicial complex whose simplices are the finite totally ordered subsets of $X$.
We will regard a poset as a topological space via the topology of its order complex.
When we need to emphasise that we work on a simplicial complex, we will write expressions like $(\Delta(X))_{\F} \simeq Y$ to say that the $\F$-extended complex of $\Delta(X)$ is homotopy equivalent to the poset $Y$. Otherwise, we will just write $X\simeq Y$.
For $x\in X$, let $X_{\leq x} = \{y\in X\tq y\leq x\}$, and define analogously $X_{<x},X_{\geq x},X_{>x}$.

If $X$ is a $G$-poset, i.e., a poset with an action of a group $G$ by poset automorphisms, then $\Delta(X)$ is an admissible $G$-complex and $(\Delta(X))^H = \Delta(X^H)$ for any subgroup $H$ of $G$, where $X^H$ is the set of $H$-fixed points of $X$.
Also recall that if $f,g:X\to Y$ are two order-preserving $G$-maps between $G$-posets $X$ and $Y$ such that $f(x)\leq g(x)$ for all $x\in X$, then (the geometric realisations of) $f,g$ are $G$-homotopy equivalent.

From now on, $p$ denotes a prime number, and groups are assumed to be finite.
If $H\leq G$ are groups, then $N_G(H)$ and $C_G(H)$ denote the normaliser and centraliser of $H$ in $G$ respectively.
If $H,K\leq G$ then we write $N_G(H,K) = N_G(H)\cap N_G(K)$, and $[H,K]$ for the commutator subgroup of $H$ and $K$.
Also $Z(G)$, $O_p(G)$, $O^{p'}(G)$ denote respectively the centre of $G$, the largest normal $p$-subgroup of $G$ (the $p$-core), and the subgroup generated by the Sylow $p$-subgroups of $G$ (i.e., the smallest normal subgroup of index prime to $p$).
After fixing the prime $p$, $\Omega_1(G)$ is the subgroup generated by the order-$p$ elements of $G$.

The Quillen poset of $G$ (at $p$) is the poset $\Ap(G)$ of non-trivial elementary abelian $p$-subgroups of $G$ ordered by inclusion $\leq$.
As mentioned in the introduction, this poset was first studied by Quillen \cite{Qui78} who established numerous connections between its homotopy properties and algebraic properties of the group $G$.
Indeed, from Quillen's work it follows that $\Ap(G)$ is homotopy equivalent to the poset $\Sp(G)$ of all non-trivial $p$-subgroups of $G$.
There is another related $p$-subgroup poset, namely the Bouc poset $\Bp(G)$ whose elements are the radical $p$-subgroups of $G$, that is, non-trivial $p$-subgroups $R$ such that $O_p(N_G(R)) = R$ (ordered by inclusion).
This poset was introduced by S. Bouc \cite{Bouc} who showed that $\Bp(G) \simeq \Sp(G)$.

We will use the following slightly stronger result on the homotopy equivalences between the $p$-subgroup posets defined above.

\begin{proposition}
\label{prop:ApSpBpGHomotopyEquivalences}
Suppose we have groups $G,\hat{G}$, where $\hat{G}$ acts on $G$.
Then the inclusions $\Ap(G) \hookrightarrow \Sp(G)$ and $\Bp(G)\hookrightarrow \Sp(G)$ are $\hat{G}$-homotopy equivalences.
\end{proposition}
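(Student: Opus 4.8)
The plan is to treat the two inclusions separately: handle $\Ap(G)\hookrightarrow\Sp(G)$ by a Quillen fibre argument reduced to fixed points, and $\Bp(G)\hookrightarrow\Sp(G)$ by invoking Bouc's collapse and checking that it is $\hat G$-equivariant. First observe that the $\hat G$-action on $G$ permutes the $p$-subgroups of $G$ while preserving elementary‑abelianness and radicality (the latter because $N_G(\cdot)$ and $O_p(\cdot)$ are preserved by automorphisms of $G$), so $\Ap(G)$, $\Bp(G)$, $\Sp(G)$ are $\hat G$-posets with admissible order complexes, and the two inclusions are $\hat G$-poset maps. By the equivariant Whitehead Theorem~\ref{thm:whitehead} it suffices, for every $K\leq\hat G$, to check that the map induced on $K$-fixed subposets is an ordinary homotopy equivalence; for this I use Quillen's fibre lemma (Appendix~\ref{sec:appendix}).

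For $i\colon\Ap(G)\hookrightarrow\Sp(G)$ and $K\leq\hat G$, the fibre of $i^K$ over $P\in\Sp(G)^K$ is $\{A\in\Ap(G)\tq A\leq P,\ K\text{ normalises }A\}=\Ap(P)^K$, where $K$ acts on the finite $p$-group $P$. Now $\Omega_1(Z(P))$ is a non‑trivial elementary abelian characteristic subgroup of $P$, hence $K$-invariant, and $A\mapsto A\,\Omega_1(Z(P))$ is a well‑defined $K$-equivariant order‑preserving self‑map of $\Ap(P)$; it lies above $\id$ and above the constant map at $\Omega_1(Z(P))$, so these three comparable $K$-equivariant order‑preserving maps are mutually $K$-homotopic, exhibiting $\Ap(P)$ as $K$-contractible and in particular $\Ap(P)^K$ as contractible. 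Hence $i^K$ is a homotopy equivalence for all $K$, so $i$ is a $\hat G$-homotopy equivalence.

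For $j\colon\Bp(G)\hookrightarrow\Sp(G)$ I would recall Bouc's theorem~\cite{Bouc} that $\Sp(G)$ deformation retracts (indeed collapses) onto $\Bp(G)$: the $G$-conjugacy classes of non‑radical $p$-subgroups are deleted in a suitable order, and the deletion of the orbit of a non‑radical subgroup $P$ is a homotopy equivalence because the link of $P$ in the poset still present at that stage is contractible — conically, via the characteristic subgroup $O_p(N_G(P))>P$. Since $\hat G$ permutes these conjugacy classes and every ingredient of the argument — normalisers, $p$-cores, the closure map $P\mapsto O_p(N_G(P))$ and the contracting homotopies — is invariant under $\Aut(G)$, each elementary deletion can be performed over a full $\hat G$-orbit and is a $\hat G$-homotopy equivalence (using the Gluing Lemma~\ref{lm:gluing} and orbit bookkeeping as in Proposition~\ref{prop:extendingBySubgroups}); composing all of these gives $\Sp(G)\simeq_{\hat G}\Bp(G)$, and the composite equivalence is $j$.

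The main obstacle is the second inclusion. One must organise the deletions so that, by the time the class of $P$ is processed, every $p$-subgroup still present that strictly contains $P$ actually normalises $P$ — only then is the product with $O_p(N_G(P))$ again a $p$-subgroup and the asserted conical contraction of the link valid — and one must verify that replacing individual subgroups by $\hat G$-orbits does not spoil these contractions, i.e. that each step is genuinely $\hat G$-equivariant and not merely $G$-equivariant. The first inclusion, by contrast, is routine once the characteristic subgroup $\Omega_1(Z(P))$ is used.
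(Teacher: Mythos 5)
Your treatment of $\Ap(G)\hookrightarrow\Sp(G)$ is correct. It differs mildly from the paper's: you compute the downward fibres directly ($\Ap(P)$ for $P\in\Sp(G)$, contracted equivariantly via $A\le A\,\Omega_1(Z(P))\ge\Omega_1(Z(P))$), whereas the paper passes to opposite posets so that the relevant fibre becomes $\Sp(G)_{<P}$, contracted via the Frattini subgroup $\Phi(P)$. Both are standard and both yield the $\hat G$-equivariant statement; note also that the paper's version of Quillen's fibre theorem already has an equivariant clause (it suffices that each fibre be $\Stab_{\hat G}(P)$-contractible), so your detour through the equivariant Whitehead theorem and all subgroups $K\le\hat G$, while valid, is not needed.

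The second half has a genuine gap, and you have in fact located it yourself: your proposed contraction of the link of a non-radical $P$ is the conical map $R\mapsto R\cdot O_p(N_G(P))$, which is only defined when $R$ normalises $P$, and your remedy — reorganising Bouc's collapse so that every surviving $p$-subgroup containing $P$ normalises $P$ by the time $P$ is deleted, and then checking that each orbit-wise deletion is $\hat G$-equivariant — is left entirely unverified. It is not clear such an ordering exists, and this is not how the argument should go. The missing idea is the intermediate passage to the normaliser: for $P\in\Sp(G)\setminus\Bp(G)$, the whole upper link $\Sp(G)_{>P}$ is $\Stab_{\hat G}(P)$-contractible at once via the zigzag
\[ R \;\ge\; N_R(P) \;\le\; N_R(P)\,O_p(N_G(P)) \;\ge\; O_p(N_G(P)), \]
where $N_R(P)>P$ because normalisers grow in $p$-groups, the middle term is a $p$-group since $O_p(N_G(P))\trianglelefteq N_G(P)$, and $O_p(N_G(P))>P$ precisely because $P$ is not radical; every term is characteristic in the pair $(G,P)$, hence the homotopies are $\Stab_{\hat G}(P)$-equivariant. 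With this, a single application of the equivariant Quillen fibre theorem to $\Bp(G)\hookrightarrow\Sp(G)$ finishes the proof (for radical $P$ the fibre $\{R\in\Bp(G)\tq R\le P\}$ has $P$ as its maximum), and no collapse scheme or ordering of deletions is required. This is exactly the paper's argument.
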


\begin{proof}
It is clear that $\hat{G}$ induces an action on $\Sp(G)$ and hence on $\Ap(G)$ and $\Bp(G)$.
We first apply Quillen's fibre Theorem \ref{thm:quillen} to the inclusion $i:\Ap(G)^{\op}\hookrightarrow \Sp(G)^{\op}$ (where $X^{\op}$ is the opposite poset).
Observe that for $P\in\Sp(G)\setminus \Ap(G)$ one has
\[ i^{-1}\big((\Sp(G)^{\op})_{\leq^{\op} P}\big) * (\Sp(G)^{\op})_{>^{\op}P} = \Sp(G)_{<P}.\]
If $\Phi(P)$ denotes the Frattini subgroup of $P$, then 
$\Sp(G)_{<P}$ is $\Stab_{\hat{G}}(P)$-contractible via the $\Stab_{\hat{G}}(P)$-equivariant chain of homotopies $R \leq R\Phi(P) \geq \Phi(P)$, for $R\in \Sp(G)_{<P}$, where $\Phi(P),R\Phi(P) \in \Sp(G)_{<P}$ since $P$ is not elementary abelian.
Hence, Theorem \ref{thm:quillen} yields the desired conclusion.

A similar argument works for $\Bp(G)$ since for $P\in \Sp(G)\setminus \Bp(G)$ we have that $\Sp(G)_{>P}$ is $\Stab_{\hat{G}}(P)$-contractible via the $\Stab_{\hat{G}}(P)$-equivariant chain of homotopies
\begin{equation}
    \label{eq:homotopyNonRadical}
    R \geq N_R(P) \leq N_R(P) O_p(N_G(P)) \geq O_p(N_G(P)),
\end{equation}
where each subgroup in this chain of inequalities lies in $\Sp(G)_{>P}$.
\end{proof}

Recall also the classical contractibility result due to Quillen, which we slightly generalise here:

\begin{proposition}
[Quillen]
\label{prop:contractibleAp}
If $O_p(G)\neq 1$ and $\hat{G}$ acts on $G$ then the posets $\Ap(G)$, $\Sp(G)$ and $\Bp(G)$ are $\hat{G}$-contractible.
\end{proposition}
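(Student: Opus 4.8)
The plan is to reduce the statement to the poset $\Sp(G)$ of all non-trivial $p$-subgroups, which carries a transparent equivariant conical contraction, and then to transfer the conclusion to $\Ap(G)$ and $\Bp(G)$ through Proposition~\ref{prop:ApSpBpGHomotopyEquivalences}.

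First, set $Q=O_p(G)$. By hypothesis $Q\neq 1$, and $Q$ is characteristic in $G$, hence $\hat G$-invariant; in particular $Q$ is a $\hat G$-fixed point of $\Sp(G)$. Consider the three order-preserving self-maps of $\Sp(G)$ given by the identity, by $f\colon P\mapsto PQ$, and by the constant map $c\equiv Q$. The map $f$ is well defined because $PQ$ is a non-trivial $p$-subgroup of $G$ (the product of a $p$-subgroup with a normal $p$-subgroup), it is order-preserving, and it is $\hat G$-equivariant since $\sigma(PQ)=\sigma(P)Q$ for every $\sigma\in\hat G$; the map $c$ is order-preserving and $\hat G$-equivariant since $Q$ is a fixed point. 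Because $\mathrm{id}(P)=P\le PQ=f(P)$ and $c(P)=Q\le PQ=f(P)$ for every $P\in\Sp(G)$, the homotopy criterion for order-preserving $\hat G$-maps recalled in Section~\ref{sec:psubgroups} gives $\mathrm{id}\simeq_{\hat G}f\simeq_{\hat G}c$. Thus $\Sp(G)$ is $\hat G$-homotopy equivalent to the fixed point $\{Q\}$, i.e. it is $\hat G$-contractible. (Equivalently, $f$ is a closure operator onto the subposet $\Sp(G)_{\ge Q}$, which has least element $Q$.)

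For $\Ap(G)$ and $\Bp(G)$ I would simply invoke Proposition~\ref{prop:ApSpBpGHomotopyEquivalences}---whose proof does not require $O_p(G)\neq 1$---to obtain $\hat G$-homotopy equivalences $\Ap(G)\simeq_{\hat G}\Sp(G)\simeq_{\hat G}\Bp(G)$; since $\hat G$-contractibility is invariant under $\hat G$-homotopy equivalence, $\Ap(G)$ and $\Bp(G)$ are $\hat G$-contractible as well. There is no real obstacle, this being the equivariant form of Quillen's classical contractibility theorem; the only points needing care are that $O_p(G)$ provides an honest $\hat G$-fixed point (so the contraction is genuinely equivariant, not merely a non-equivariant homotopy), and that $\Bp(G)$ admits no equally direct conical contraction because $PQ$ need not be radical---which is exactly why passing through $\Sp(G)$ is the right route.
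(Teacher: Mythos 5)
Your proposal is correct and follows exactly the paper's own route: the equivariant conical contraction $P \leq PO_p(G) \geq O_p(G)$ on $\Sp(G)$ (using that $O_p(G)$ is characteristic, hence a $\hat G$-fixed point), followed by an appeal to Proposition~\ref{prop:ApSpBpGHomotopyEquivalences} to transfer $\hat G$-contractibility to $\Ap(G)$ and $\Bp(G)$. Nothing further is needed.
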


\begin{proof}
We have a $\hat{G}$-equivariant homotopy
\[ P \leq PO_p(G)\geq O_p(G), \]
where each term lies in $\Sp(G)$.
Thus $\Sp(G)$ is $\hat{G}$-contractible, and the same holds for $\Ap(G)$ and $\Bp(G)$ by Proposition \ref{prop:ApSpBpGHomotopyEquivalences}.
\end{proof}

Next, we introduce a distinguished subposet $\F$ that will be crucial to recovering (somehow) the homotopy type of $\Ap(G)$ from  $\Ap(H)$, with $H\leq G$, and the behaviour of the Quillen poset of the centralisers in $H$ of the subgroups in $\F$.

\begin{definition}
[The $\F$-poset]
Let $H\leq G$ be finite groups, and fix a prime $p$.
Then we define
\[ \F_G(H)  = \{E\in \Ap(G)\tq E\cap H = 1\},\]
\[ \F_G(H)' = \{E\in \F_G(H)\tq O_p(C_H(E)) = 1\}.\]
\end{definition}

The following theorem is extracted from \cite{PS22}.

\begin{theorem}
\label{thm:equivalentPosetApG}
Let $H\leq K\leq G$ be finite groups and $p$ a prime.
Then $\Ap(K)$ is $N_G(H,K)$-homotopy equivalent to the poset
\[ \Bp(H\uparrow K) := \Bp(H) \cup \F_K(H),\]
whose ordering $\preceq$ is given as follows: inside $\Bp(H)$ and $\F_K(H)$ we keep the inclusion ordering, and for $R\in \Bp(H)$ and $E\in\F_K(H)$ we put
\[ E\prec R \quad \Leftrightarrow \quad C_R(E)\neq 1.\]
\end{theorem}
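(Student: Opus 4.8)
The plan is to deduce the equivalence from the standard ones by compressing $\Sp(K)$, in two stages, onto the hybrid poset $\Bp(H\uparrow K)$. Since $N_G(H,K)$ normalises $K$ it acts on $K$, so Proposition~\ref{prop:ApSpBpGHomotopyEquivalences} gives $\Ap(K)\simeq_{N_G(H,K)}\Sp(K)$, and it suffices to construct an $N_G(H,K)$-homotopy equivalence $\Sp(K)\simeq\Bp(H\uparrow K)$. All maps and homotopies below are assembled from the conjugation-natural operations $P\mapsto P\cap H$, $P\mapsto\Phi(P)$, $Q\mapsto O_p(N_H(Q))$ and $R\mapsto C_R(-)$, so every map is automatically $N_G(H,K)$-equivariant; by the equivariant Whitehead theorem (Theorem~\ref{thm:whitehead}) it then suffices to check each ordinary homotopy equivalence also on the fixed-point subposets $(-)^{\widehat H}$ for $\widehat H\leq N_G(H,K)$, where $\Sp(K)^{\widehat H}$ is the poset of $\widehat H$-invariant $p$-subgroups of $K$ and $\Bp(H\uparrow K)^{\widehat H}$ has the analogous description, so the same arguments apply verbatim.

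\emph{Stage one (the transverse direction).} Put $\mathcal N=\{P\in\Sp(K):P\cap H\neq 1\}$ and let $\mathcal Q=\F_K(H)\cup\mathcal N$, with the order induced from $\Sp(K)$. The $p$-subgroups lying in $\Sp(K)\setminus\mathcal Q$ are exactly the transverse, non-elementary-abelian ones, and for such a $P$ the subposet $\Sp(K)_{<P}$ is contractible via the Frattini homotopy $R\leq R\Phi(P)\geq\Phi(P)$ (this is the argument used for $\Ap(G)\hookrightarrow\Sp(G)$ in the proof of Proposition~\ref{prop:ApSpBpGHomotopyEquivalences}, since $\Phi(P)\neq 1$ and $\Phi(P),R\Phi(P)<P$). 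Hence $i^{-1}\big((\Sp(K)^{\op})_{\leq P}\big)*(\Sp(K)^{\op})_{>P}$ is a join with the contractible space $\Sp(K)_{<P}$, so it is contractible, and Quillen's fibre theorem (Theorem~\ref{thm:quillen}) applied to $i\colon\mathcal Q^{\op}\hookrightarrow\Sp(K)^{\op}$ yields $\Sp(K)\simeq_{N_G(H,K)}\mathcal Q$.

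\emph{Stage two (the $H$-direction).} Now compress $\mathcal N\subseteq\mathcal Q$ onto $\Bp(H)$, compatibly with $\F_K(H)$. The order-preserving map $r\colon P\mapsto P\cap H$ identifies $\mathcal N$ with $\Sp(H)$: the fibre $r^{-1}(\Sp(H)_{\geq Q})=\Sp(K)_{\geq Q}$ has least element $Q$. Composing with Bouc's homotopy $S\geq N_S(Q)\leq N_S(Q)\,O_p(N_H(Q))\geq O_p(N_H(Q))$, which contracts $\Sp(H)_{>Q}$ for non-radical $Q$, carries $\Sp(H)$ to $\Bp(H)$. Running these two compressions simultaneously on $\mathcal Q$ — most cleanly by interpolating through an auxiliary poset of pairs between $\mathcal Q$ and $\Bp(H\uparrow K)$ — produces $\mathcal Q\simeq\F_K(H)\cup\Bp(H)$. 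The cross-relations that persist are those between a transverse $E\in\F_K(H)$ and a radical $R\in\Bp(H)$ for which the intervening mixed $p$-subgroups of $K$ (those $P$ with $1\neq P\cap H\neq P$) are not all collapsed; unwinding the homotopies shows this occurs precisely when $E$ centralises some nontrivial subgroup of $R$, i.e.\ $C_R(E)\neq 1$. This gives $\mathcal Q\simeq\Bp(H\uparrow K)$ with the stated order, and composing the two stages gives $\Ap(K)\simeq_{N_G(H,K)}\Bp(H\uparrow K)$.

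I expect the main obstacle to be the final step of Stage two: turning the two independent compressions into a single equivalence with the hybrid poset, and in particular verifying that the surviving cross-relation is exactly $C_R(E)\neq 1$ rather than some messier condition (such as ``$E$ normalises $R$ and $ER$ is a $p$-group''). This is where one must run Quillen's theorem on an auxiliary diagram of posets and check contractibility of the resulting double fibres, the mixed $p$-subgroups of $K$ serving as the connecting cones. A second point requiring care is that $H$ need not be normal in $K$, so a transverse $E\in\F_K(H)$ need not normalise $P\cap H$; the order-preserving map $P\mapsto P\cap H$ is the only robust handle, and one must verify that the centraliser condition $C_R(E)=R\cap C_K(E)\neq 1$ transforms correctly through the compression. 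The equivariance under $N_G(H,K)$, by contrast, is routine once the non-equivariant statement is in place, by the fixed-point reduction described above.
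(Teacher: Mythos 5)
Your Stage one is correct and complete: discarding the non-elementary-abelian subgroups $P$ with $P\cap H=1$ via the Frattini homotopy is exactly the argument used for $\Ap\hookrightarrow\Sp$ in Proposition \ref{prop:ApSpBpGHomotopyEquivalences}, and the fibres over elements of $\Q=\F_K(H)\cup\N$ are cones. The genuine gap is Stage two, which is where the entire content of the theorem lives and which you leave as a sketch, as you yourself concede. Quillen's fibre theorem needs an actual order-preserving map between $\Q$ and a poset carrying the relation $\prec$, and you never produce one; ``unwinding the homotopies shows this occurs precisely when $C_R(E)\neq1$'' is the conclusion to be proved, not an argument. Concretely, the natural candidate on the mixed part, $P\mapsto P\cap H$, has two problems. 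First, it does not land in $\Bp(H)$, so one must pass through an intermediate hybrid poset $\F_K(H)\cup\Sp(H)$ with $E\prec Q\iff C_Q(E)\neq1$ and then compress $\Sp(H)$ onto $\Bp(H)$ \emph{inside} that hybrid order; in the second compression the relevant fibre over a non-radical $Q$ is no longer just $\Sp(H)_{>Q}$ but also contains the elements of $\F_K(H)$ below $Q$ in the $\prec$-order, so the Bouc homotopy of Eq. (\ref{eq:homotopyNonRadical}) must be checked to respect the centraliser relation. Second, for $P\mapsto P\cap H$ to be order-preserving into the hybrid poset you need: $E\leq P$ and $P\cap H\neq1$ imply $C_{P\cap H}(E)\neq1$. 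The $p$-group fixed-point argument gives this only when $E$ normalises $P\cap H$, which essentially requires $H\trianglelefteq K$; you flag this difficulty at the end but do not resolve it.

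For comparison, the paper does not prove the statement from scratch either: it quotes \cite[Theorem 4.10]{PS22}, where the poset appears as $W^{\B}_K(H,1)$, adding only that the argument there upgrades from $N_K(H)$- to $N_G(H,K)$-equivariance by naturality. Your Stage one and the equivalence $\N\simeq\Sp(H)$ are sound ingredients of such a proof, but the assembly into $\Bp(H\uparrow K)$ --- the precise identification of the surviving cross-relation as $C_R(E)\neq1$ --- is exactly the part that must be carried out explicitly and is missing.
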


\begin{proof}
This is the poset $W^{\B}_K(H,1)$ of \cite[Theorem 4.10]{PS22}, where it is proved to be $N_K(H)$-homotopy equivalent to $\Ap(K)$.
Nevertheless, the same proof extends to an $N_G(H,K)$-homotopy equivalence.
\end{proof}

\begin{remark}
\label{rk:fixedPointsByPgroup}
Let $E$ be a $p$-group acting on a group $H$.
Then $\Sp(H)^E \simeq \Sp(C_H(E))$ (and hence we have the same homotopy equivalences for $\Bp$ and $\Ap$).
Indeed, $\Sp(C_H(E))\subseteq \Sp(H)^E$, and if $R\in \Sp(H)^E$, then $C_R(E)\neq 1$ by $p$-group actions, so the map $R \mapsto C_R(E)$ defines a strong deformation retract from $\Sp(H)^E$ onto $\Sp(C_H(E))$.
Note that $\Bp(H)^E \hookrightarrow \Sp(H)^E$ is a homotopy equivalence since the inclusion $\Bp(H)\hookrightarrow \Sp(H)$ is an $E$-equivariant homotopy equivalence by Proposition \ref{prop:ApSpBpGHomotopyEquivalences}.
Thus, we get
\begin{equation}
\label{eq:homotopyEquivalencesFPandBp}
\Bp(H)^E \simeq \Sp(H)^E \simeq \Sp(C_H(E)) \simeq \Bp(C_H(E)).
\end{equation}
Moreover, if $H,E\leq G$ and $E$ acts on $H$ by conjugation, then the equivalences in Eq. (\ref{eq:homotopyEquivalencesFPandBp}) are $N_G(H,E)$-equivariant.
\end{remark}

We will usually work under the assumption that $\F_G(H)$ consists of at most cyclic subgroups.
This is equivalent to saying that $\F_G(H)$ is an antichain, where we regard $\F_G(H)$ as a subposet of $\Ap(G)$ with order given by inclusion.

\begin{corollary}
\label{coro:ApAndExtended}
Let $H\leq K\leq G$ be such that $\F_K(H)$ is an antichain.
Then $\Ap(K)\simeq \Bp(H\uparrow K)$ is $N_G(H,K)$-homotopy equivalent to $(\Delta \Bp(H))_{\F_K(H)}$.
\end{corollary}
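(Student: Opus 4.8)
The plan is to deduce Corollary~\ref{coro:ApAndExtended} from Theorem~\ref{thm:equivalentPosetApG} together with Proposition~\ref{prop:extendingBySubgroups}. By Theorem~\ref{thm:equivalentPosetApG} we already know that $\Ap(K)$ is $N_G(H,K)$-homotopy equivalent to $\Bp(H\uparrow K) = \Bp(H)\cup\F_K(H)$, so it suffices to identify the order complex $\Delta(\Bp(H\uparrow K))$ with the $\F_K(H)$-extended complex $(\Delta\Bp(H))_{\F_K(H)}$. The first step is therefore to recognise $\F_K(H)$, under the antichain hypothesis, as a \emph{discrete} set of vertices sitting on top of $\Bp(H)$, and to check that $\Bp(H)$ is a \emph{full} subposet of $\Bp(H\uparrow K)$: indeed the only new order relations in $\Bp(H\uparrow K)$ are of the form $E\prec R$ with $E\in\F_K(H)$, $R\in\Bp(H)$, and since $\F_K(H)$ is an antichain there are no relations among its own elements, so every chain in $\Bp(H\uparrow K)$ is either a chain in $\Bp(H)$ or a chain in $\Bp(H)$ together with a single minimal element from $\F_K(H)$.

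The second step is the comparison of simplices. A simplex of $(\Delta\Bp(H))_{\F_K(H)}$ is, by Definition~\ref{def:complexExtended}, either a chain $\sigma$ in $\Bp(H)$, or a set $\sigma\cup\{E\}$ where $E\in\F_K(H)$ fixes $\sigma$ pointwise under the action of $\F_K(H)$ on $\Bp(H)$ by conjugation. Here one must be a little careful about what ``the action of $E$ on $\Bp(H)$'' means and how ``$E$ fixes $R$'' translates: $E$ acts on $H$ by conjugation (since $E\cap H=1$ but $E$ normalises $H$ as $H\trianglelefteq G$; more precisely we work inside $N_G(H,K)$), and for $R\in\Bp(H)$ the relevant condition is whether $E$ normalises $R$. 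By Remark~\ref{rk:fixedPointsByPgroup} the fixed-point complex $\Bp(H)^E$ is $N_G(H,E)$-homotopy equivalent to $\Bp(C_H(E))$; what I actually need, though, is the literal combinatorial statement that $R\in\Bp(H)^E \Leftrightarrow C_R(E)\neq 1$, which holds because $E$-invariant nontrivial $p$-subgroups have nontrivial fixed points (coprimality is not needed here as $E$ and $R$ are both $p$-groups — one uses that a $p$-group acting on a nontrivial $p$-group has nontrivial fixed points). Thus $R\in\Lk_{\Delta\Bp(H)}(E) \Leftrightarrow C_R(E)\neq 1 \Leftrightarrow E\prec R$, and so a chain $\sigma\cup\{E\}$ is a simplex of $(\Delta\Bp(H))_{\F_K(H)}$ precisely when $E\prec R$ for every $R\in\sigma$, i.e. precisely when $\sigma\cup\{E\}$ is a chain in $\Bp(H\uparrow K)$. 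This establishes a simplicial isomorphism $\Delta(\Bp(H\uparrow K)) \cong (\Delta\Bp(H))_{\F_K(H)}$, clearly $N_G(H,K)$-equivariant.

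An alternative, perhaps cleaner, route for the second and third steps is to invoke Proposition~\ref{prop:extendingBySubgroups} directly: having identified $\Delta(\Bp(H\uparrow K))$ with an $\F_K(H)$-extended complex built from $\Delta\Bp(H)$, and given the chain of $N_G(H,K)$-homotopy equivalences among $\Ap(H)$, $\Sp(H)$, $\Bp(H)$ from Proposition~\ref{prop:ApSpBpGHomotopyEquivalences}, one transports the extension along these equivalences; but since the statement of the corollary only asserts the equivalence with $(\Delta\Bp(H))_{\F_K(H)}$ itself, the direct identification above already suffices and Proposition~\ref{prop:extendingBySubgroups} is not strictly needed. I would then simply record the $N_G(H,K)$-equivariance and conclude.

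The main obstacle I anticipate is bookkeeping rather than conceptual: making sure the ``action'' appearing in Definition~\ref{def:complexExtended} (a collection of groups acting on $L$) is set up so that ``$E\in\F_K(H)$ fixes $\sigma\in\Delta\Bp(H)$ pointwise'' unwinds exactly to the relation $E\prec R$ for all $R\in\sigma$, and in particular that the \emph{pointwise} fixing (each vertex $R$ individually normalised by $E$) is the right condition and matches $C_R(E)\neq 1$ for each $R$ separately. The antichain hypothesis on $\F_K(H)$ is what guarantees the new vertices are genuinely discrete (no simplices among them), which is exactly the shape required to apply Definition~\ref{def:complexExtended} and, downstream, Theorem~\ref{thm:MVsequence}; I would state this reduction explicitly and note that without it $\Bp(H\uparrow K)$ would not be an $\F$-extended complex in the sense of that definition.
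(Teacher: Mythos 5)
Your proof hinges on the claimed biconditional $R\in\Bp(H)^E \Leftrightarrow C_R(E)\neq 1$, but only the forward implication holds. If $E$ (a $p$-group) normalises the non-trivial $p$-group $R$, then indeed $C_R(E)\neq 1$; but the converse fails: $C_R(E)\neq 1$ merely says some non-trivial $r\in R$ commutes with $E$, which forces $E$ to normalise $\langle r\rangle$, not $R$ itself. Concretely, the link of $E\in\F_K(H)$ in $\Bp(H\uparrow K)$ is the subposet $\N_H^{\B}(E)=\{R\in\Bp(H)\tq C_R(E)\neq 1\}$, which is upward-closed in $\Bp(H)$, whereas the link of $E$ in $(\Delta\Bp(H))_{\F_K(H)}$ is the fixed-point subposet $\Bp(H)^E$, which is typically not upward-closed; so the two complexes are genuinely different (one only has an inclusion $\Bp(H)^E\hookrightarrow\N_H^{\B}(E)$), and there is no simplicial isomorphism between $(\Delta\Bp(H))_{\F_K(H)}$ and $\Delta\Bp(H\uparrow K)$ in general.

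Because of this the corollary is not a re-labelling but a genuine theorem. The paper's proof builds the simplicial map $\Psi:(\Delta\Bp(H))_{\F_K(H)}\to\Delta\Bp(H\uparrow K)$ (the identity on $\Bp(H)$ and on $\F_K(H)$, and sending $\sigma\cup\{E\}$ with $\sigma\subseteq\Bp(H)^E$ to the chain $\{E\prec R_1<\cdots\}$, which is well-defined by the \emph{forward} implication you did prove). It then shows $\Psi$ is a homotopy equivalence by comparing the two pushout squares from Theorem~\ref{thm:MVsequence} via the Gluing Lemma~\ref{lm:gluing}; this in turn requires proving that the link inclusion $\Bp(H)^E\hookrightarrow\N_H^{\B}(E)$ is a homotopy equivalence, which the paper does by chasing the commutative square through $\Sp(H)^E$, $\N_H^{\S}(E)$ and $\Sp(C_H(E))$ using Quillen's fibre theorem and the deformation retraction $R\mapsto C_R(E)$. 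Your alternative route through Proposition~\ref{prop:extendingBySubgroups} cannot rescue this, since that proposition compares two $\F$-extended complexes, and it still presupposes that $\Delta\Bp(H\uparrow K)$ \emph{is} an $\F$-extended complex, which is exactly what fails. You would need to supply the link comparison and the Gluing-Lemma argument to close the gap.
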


\begin{proof}
Let $\F := \F_K(H)$.
For $E\in \F$, note that $\Sp(C_H(E)) \hookrightarrow \Sp(H)^E$ is an $N_G(H,K,E)$-homotopy equivalence since the map $R\in \Sp(H)^E \mapsto C_R(E) \in \Sp(C_H(E))$ is an $N_G(H,K,E)$-equivariant homotopy inverse.

Let $\N^\S_H(E) = \{ R\in\Sp(H) \tq N_R(E)\neq 1\}$ and $\N^{\B}_H(E) = \Bp(H)\cap \N^{\S}_H(E)$.
Thus $\N^\B_H(E) \hookrightarrow \N^\S_H(E)$ is an $N_G(H,K,E)$-homotopy equivalence by Quillen's fibre Theorem \ref{thm:quillen}.
Indeed, if $R\in \N^\S_H(E)$ is not a radical $p$-subgroup of $H$, then, as $\N^\S_H(E)$ is an upward-closed subposet of $\Sp(H)$, we see that $\N^\S_H(E)_{>R} = \Sp(H)_{>R}$, which is $N_G(H,R)$-contractible (see Eq. (\ref{eq:homotopyNonRadical})).

We claim now that the inclusion $\Bp(H)^E\hookrightarrow \N^\B_H(E)$ also yields an $N_G(H,K,E)$-homotopy equivalence.
In fact, the map $c:\N^\S_H(E) \to \Sp(C_H(E))$ that takes $R$ to $ C_R(E)$ defines an equivariant strong deformation retract.
From this, we conclude that the inclusion $\Sp(H)^E\hookrightarrow \N^\S_H(E)$ is an $N_G(H,K,E)$-homotopy equivalence as well (see Remark \ref{rk:fixedPointsByPgroup}).

Then we have the following commutative diagram with $N_G(H,K,E)$-homotopy equivalences:
\[\xymatrix{
\Bp(H)^E \ar@{^(->}[d]_{\simeq} \ar@{^(->}[r] & \N^\B_H(E) \ar@{^(->}[d]_{\simeq} & \\
\Sp(H)^E \ar@{^(->}[r]_{\simeq} & \N^\S_H(E) \ar[r]^{c}_{\simeq} & \Sp(C_H(E))
}\]
Therefore $\Bp(H)^E\hookrightarrow \N^\B_H(E)$ is an $N_G(H,K,E)$-homotopy equivalence.

On the other hand, by Theorem \ref{thm:MVsequence} we have an adjunction
\[ \xymatrix{
\coprod_{E\in \F} |\Delta\Bp(H)^E| \ar@{^(->}[d] \ar[r] & |\Delta\Bp(H)| \ar@{^(->}[d] \\
\coprod_{E\in \F} |(\Delta\Bp(H)^E) * \{E\}| \ar[r]& | (\Delta\Bp(H))_{\F}|
}\]
Similarly, $|\Delta\Bp(H\uparrow K)|$ is obtained as the following adjunction space:
\[ \xymatrix{
\coprod_{E\in \F} |\Delta\N^\B_H(E)| \ar@{^(->}[d] \ar[r] & |\Delta\Bp(H)| \ar@{^(->}[d]\\
\coprod_{E\in \F} |(\Delta\N^\B_H(E)) * \{E\}| \ar[r] & |\Delta\Bp(H\uparrow K)|
}\]
Note that we have an $N_G(H,K)$-equivariant simplicial map $\Psi:{(\Delta\Bp(H))}_{\F} \to \Delta\Bp(H\uparrow K)$ which
is the identity on $\Bp(H)$ and on $\F$, and it maps a simplex $ \{R_1 < \ldots < R_k\} \cup \{E\}$ with $R_i\in \Bp(H)^E$ and $E\in \F$ 
to $\{E\prec R_1 < \ldots < R_k\}$.
Since $\Psi$ restricts to the homotopy equivalences described above, by the Gluing Lemma \ref{lm:gluing} we conclude that $\Psi$ is a homotopy equivalence.

Finally, the claim on the $N_G(H,K)$-homotopy equivalence follows by applying the equivariant Whitehead theorem.
Namely, for each $N\leq N_G(H,K)$ we take the fixed point subcomplexes by $N$ and observe that
\[ \big( (\Delta\Bp(H))_{\F}\big)^N = \big(\Delta \Bp(H)^N\big)_{\F^N}\]
and
\[ \big(\Delta \Bp(H\uparrow K) \big)^N = \Delta\big( \Bp(H)^N  \cup \F^N \big).\]
Then the same proof restricted to the $N$-fixed points yields a homotopy equivalence of the pushouts
\[\big((\Delta\Bp(H))_{\F}\big)^N \to  \big(\Delta \Bp(H\uparrow K) \big) ^N,\]
where this is the map induced by restricting the simplicial map $\Psi$ above.
\end{proof}

We produce one last homotopy equivalence between $p$-subgroup complexes.

\begin{proposition}
\label{prop:extendingByNormalSubgroups}
Let $H \leq K\leq G$ be such that $H,K$ are normal in $G$, and $\F_K(H) \cup \F_G(K)$ is an antichain.
Then there are $G$-homotopy equivalences:
\[ \Ap(G) \simeq_G \big(\, (\Delta \Bp(H))_{\F_K(H)'} \,\big)_{\F_G(K)'} \, \simeq_G \, \big(\, \Delta \Bp(K) \,\big)_{\F_G(K)'}. \]
\end{proposition}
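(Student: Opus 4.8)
The strategy is to stack two applications of the machinery already developed: first peel off $\F_G(K)'$ from $\Ap(G)$ to get a complex built on $\Ap(K)$, then peel off $\F_K(H)'$ from $\Ap(K)$ to get a complex built on $\Bp(H)$, and finally check the two extension steps commute so we may glue them. Concretely, I would begin with Corollary~\ref{coro:ApAndExtended} applied to the triple $K\leq G\leq G$: since $K\normal G$ and $\F_G(K)$ is an antichain (a fortiori, as $\F_K(H)\cup\F_G(K)$ is), we get a $G$-homotopy equivalence $\Ap(G)\simeq_G (\Delta\Bp(K))_{\F_G(K)}$. The subtlety is that the statement uses $\F_G(K)'$ rather than $\F_G(K)$; I would argue that the vertices $E\in\F_G(K)$ with $O_p(C_K(E))\neq 1$ may be discarded, because for such $E$ the link $\Lk_{\Bp(K)}(E)=\Bp(K)^E\simeq\Bp(C_K(E))$ (Remark~\ref{rk:fixedPointsByPgroup}) is $\Stab_G(E)$-contractible by Proposition~\ref{prop:contractibleAp}, so Theorem~\ref{thm:MVsequence}(3) lets us remove them one orbit at a time without changing the $G$-homotopy type. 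This yields $\Ap(G)\simeq_G(\Delta\Bp(K))_{\F_G(K)'}$, the right-hand equivalence.

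For the middle term, I would likewise apply Corollary~\ref{coro:ApAndExtended} to $H\leq K\leq G$ (using $H\normal G$, hence $N_G(H,K)=G$, and $\F_K(H)$ an antichain) to obtain $\Ap(K)\simeq_G(\Delta\Bp(H))_{\F_K(H)}$, and then the same contractibility argument removes the vertices in $\F_K(H)\setminus\F_K(H)'$ to give $\Ap(K)\simeq_G(\Delta\Bp(H))_{\F_K(H)'}$. Now I invoke Proposition~\ref{prop:extendingBySubgroups}: it says the $\F$-extension operation is functorial for $G$-homotopy equivalences of admissible $G$-complexes, provided $\F$ is a $G$-invariant family of subgroups of $G$. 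Here $\F=\F_G(K)'$, which is $G$-invariant since $K\normal G$, and both $(\Delta\Bp(H))_{\F_K(H)'}$ and $\Delta\Bp(K)$ carry admissible $G$-actions (order complexes of $G$-posets are admissible; and the $\F_K(H)'$-extension of an admissible complex remains admissible because the action on the new vertices is by conjugation on subgroups, again admissible). Hence applying the $\F_G(K)'$-extension to the $G$-equivalence $(\Delta\Bp(H))_{\F_K(H)'}\simeq_G\Delta\Bp(K)$ yields
\[
\big((\Delta\Bp(H))_{\F_K(H)'}\big)_{\F_G(K)'}\;\simeq_G\;\big(\Delta\Bp(K)\big)_{\F_G(K)'}.
\]

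It remains to identify the left-hand iterated extension with $\Ap(G)$. For this I would argue that forming $\big(\Delta\Bp(K)\big)_{\F_G(K)'}$ and then substituting the $G$-equivalent model $(\Delta\Bp(H))_{\F_K(H)'}$ for $\Bp(K)$ is exactly the content of the displayed chain: the right equivalence $\Ap(G)\simeq_G(\Delta\Bp(K))_{\F_G(K)'}$ from the first paragraph, composed with the equivalence of the previous display, gives the full statement. One technical point to verify carefully: in Definition~\ref{def:complexExtended} the family $\F$ must act on the complex being extended, and the compatibility of the $G$-action must hold at each stage; since the vertices of $\F_G(K)'$ are subgroups $E\leq G$ with $E\cap K=1$, such $E$ acts on $\Bp(H)$ (as $H\leq K$ and $E\cap H\leq E\cap K=1$), and on the extended complex $(\Delta\Bp(H))_{\F_K(H)'}$ by sending a vertex $E'\in\F_K(H)'$ to $E'^E$; one checks this lands back in $\F_K(H)'$ and respects the simplex structure, using that $\F_K(H)\cup\F_G(K)$ is an antichain to rule out unwanted incidences.

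\textbf{Main obstacle.} The routine steps (two invocations of Corollary~\ref{coro:ApAndExtended}, the contractibility pruning via Theorem~\ref{thm:MVsequence}(3)) are straightforward. The genuine work is the bookkeeping in the last paragraph: verifying that the iterated $\F$-extension is well-defined and admissible at each level, that the $G$-action on the doubly-extended complex is the expected one, and that Proposition~\ref{prop:extendingBySubgroups} applies with $\F=\F_G(K)'$ rather than a family literally "closed under $G$-conjugation" in the ambient sense — one must check $\F_G(K)'$ is $G$-invariant, which follows from $K\normal G$ and the $G$-invariance of the condition $O_p(C_K(E))=1$. The antichain hypothesis on $\F_K(H)\cup\F_G(K)$ is what guarantees all the extensions produce genuine simplicial complexes (no vertex of one family is comparable to a vertex of the other), so I would flag explicitly where it is used.
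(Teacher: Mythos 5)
Your proposal is correct and follows essentially the same route as the paper: two applications of Corollary~\ref{coro:ApAndExtended} (to $K\leq G\leq G$ and to $H\leq K\leq G$, using that $N_G(H,K)=G$ because $H,K\normal G$), elimination of the vertices with $O_p(C_\bullet(E))\neq 1$ via the contractibility of their links together with Theorem~\ref{thm:MVsequence}(3), and Proposition~\ref{prop:extendingBySubgroups} to transport the extension across the equivalence $(\Delta\Bp(H))_{\F_K(H)'}\simeq_G\Delta\Bp(K)$. The only difference is a harmless reordering — the paper applies Proposition~\ref{prop:extendingBySubgroups} with $\F_G(K)$ and then prunes to $\F_G(K)'$, whereas you prune first and extend by $\F_G(K)'$ directly — and you are slightly more explicit about the admissibility of the once-extended complex, which the paper leaves implicit; the antichain hypothesis is only needed to make both invocations of Corollary~\ref{coro:ApAndExtended} legitimate, not to rule out incidences between the two families (their vertex sets are automatically disjoint).
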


\begin{proof}
First, we have
$$\Ap(G) \simeq_G (\Delta\Bp(K))_{\F_G(K)}$$
by Corollary \ref{coro:ApAndExtended}.
Also
\[ ( \Delta \Bp(H) )_{\F_K(H)} \simeq_G \Delta \Ap(K) \simeq_G \Delta\Bp(K)\]
by Corollary \ref{coro:ApAndExtended} and Proposition \ref{prop:ApSpBpGHomotopyEquivalences}.

Now, an element $E\in \F_K(H)\setminus \F_K(H)'$ satisfies
\[ \Lk_{\Delta\Bp(H)}(E) = \Delta(\Bp(H)^E) \simeq_{N_G(E)} \Bp(C_H(E))\simeq_{N_G(E)} *\]
by Remark \ref{rk:fixedPointsByPgroup} and Proposition \ref{prop:contractibleAp}.
Since the sets $\F_K(H)'$ and $\F_K(H)$ are $G$-invariant, by Theorem \ref{thm:MVsequence}(3) we get $G$-homotopy equivalences
\[ (\Delta \Bp(H))_{\F_K(H)'} \simeq_G ( \Delta \Bp(H) )_{\F_K(H)} \simeq_G \Delta \Bp(K).\]
Since $\F_G(K)$ is also closed under $G$-conjugation, the $G$-homotopy equivalence
\[  \big(\, (\Delta \Bp(H))_{\F_K(H)'} \,\big)_{\F_G(K)} \, \simeq_G \, \big(\, \Delta \Bp(K) \,\big)_{\F_G(K)} \]
follows from Proposition \ref{prop:extendingBySubgroups}.
Finally, the same argument as before now shows that we can equivariantly eliminate the vertices in $\F_G(K)\setminus \F_G(K)'$ and thus
\[  \big(\, (\Delta \Bp(H))_{\F_K(H)'} \,\big)_{\F_G(K)'} \, \simeq_G \, \big(\, \Delta \Bp(K) \,\big)_{\F_G(K)'}.\]
\end{proof}

One could even try to iterate the above procedure and consider longer chains of normal subgroups.
Moreover, all these results do not depend on the structure of the $\Bp$ poset, so they are still valid if we change $\Bp$ by $\Ap$ or $\Sp$.
However, in these notes, we will mainly consider the Bouc poset.

\section{Some preliminary results on buildings}
\label{sec:buildings}

In this section, we work with (spherical) buildings, as defined by Tits \cite{Tits}.
We adopt the language of \cite{AB} when we talk about buildings and groups with BN-pairs.
We will just review some basic facts, and we refer to \cite{AB,Tits} for standard definitions and further details.

Below, we recall the well-known Solomon-Tits theorem, which will be a key ingredient in the proof of our main theorem.

\begin{theorem}
[{Solomon-Tits}]
\label{thm:solomonTits}
Let $\Delta$ be a spherical building of rank $n$, and let $C$ be a fixed chamber.
Then $\Delta$ is homotopy equivalent to a wedge of spheres of dimension $n-1$.
Moreover, there is one sphere for each apartment of $\Delta$ containing $C$, and they yield a basis for the $(n-1)$-th homotopy group of $\Delta$ (and hence for the $(n-1)$-th homology group).
\end{theorem}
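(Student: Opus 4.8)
The plan is to carry out the classical construction of Solomon and Tits: build $\Delta$ up to homotopy by attaching its closed chambers one at a time, in breadth-first order from $C$, and to check that each attachment either preserves the homotopy type or wedges on a copy of $S^{n-1}$. List the chambers of $\Delta$ as $C=C_0,C_1,\dots,C_N$ with $d(C,C_i)$ non-decreasing and each $C_i$ ($i\geq 1$) adjacent, in the chamber graph, to some earlier $C_j$ with $d(C,C_j)=d(C,C_i)-1$ (possible since that graph is connected), and set $\Delta_i=\bar C_0\cup\cdots\cup\bar C_i$; thus $\Delta_0=\bar C$ is a simplex, $\Delta_N=\Delta$, and each $\bar C_i$ is an $(n-1)$-ball with boundary the $(n-2)$-sphere $\partial\bar C_i$.

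The technical core is to identify $A_i:=\bar C_i\cap\Delta_{i-1}$, a subcomplex of $\partial\bar C_i$ (it omits the top simplex $C_i$ itself, the chambers being distinct). Fix an apartment $\Sigma_i$ containing $C$ and $C_i$, and call a panel $\pi$ of $C_i$ a \emph{descent panel} if $\mathrm{proj}_\pi(C)\neq C_i$ (equivalently, the $\Sigma_i$-neighbour of $C_i$ across $\pi$ is strictly closer to $C$; this does not depend on $\Sigma_i$). Using uniqueness of gates, I would prove that a face $\tau\subsetneq C_i$ lies in $\Delta_{i-1}$ if and only if $\tau$ is contained in some descent panel of $C_i$: if $\pi$ is a descent panel, the near chamber across $\pi$ is one of the earlier $C_j$, which gives ``if''; for ``only if'', note that if $\tau$ lies in no descent panel then $\mathrm{proj}_\tau(C)=C_i$, so $d(C,\tau)=d(C,C_i)$ and $C_i$ is the only chamber through $\tau$ at that distance, whence no earlier chamber (each at distance $\leq d(C,C_i)$ from $C$) can contain $\tau$. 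Therefore $A_i$ is precisely the union of the closed descent panels of $C_i$; there is at least one because $C_i\neq C$, and if there are $k<n$ of them they all contain the unique vertex of $C_i$ that none of them omits, so that $A_i$ is a cone, hence contractible, while if there are $n$ of them $A_i=\partial\bar C_i\cong S^{n-2}$.

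Granting this claim, an induction with base case $\Delta_0=\bar C$ shows that every $\Delta_i$ is $(n-2)$-connected: attaching the $(n-1)$-ball $\bar C_i$ along a contractible $A_i$ is a homotopy equivalence $\Delta_i\simeq\Delta_{i-1}$ (Proposition~0.18 of \cite{Hatcher}), while attaching it along $A_i=\partial\bar C_i$---whose inclusion into the $(n-2)$-connected $\Delta_{i-1}$ is null-homotopic---gives $\Delta_i\simeq\Delta_{i-1}\vee S^{n-1}$. Hence $\Delta$ is $(n-2)$-connected and $(n-1)$-dimensional, so spherical of dimension $n-1$ by Theorem~\ref{thm:spherical}, with one $S^{n-1}$ summand for each \emph{sphere-creating} stage, i.e.\ each $i$ at which all $n$ panels of $C_i$ are descent panels. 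Such a $C_i$ is a local maximum of $d(C,-)$ within $\Sigma_i$, hence the chamber of $\Sigma_i$ opposite $C$; conversely a chamber $D$ opposite $C$ lies in a unique apartment $\Sigma_D$ containing $C$, and at the stage $D$ is attached all of its panels are already present (each $\Sigma_D$-neighbour of $D$ is closer to $C$), so that stage is sphere-creating. Thus the sphere-creating stages biject with the chambers opposite $C$, hence with the apartments containing $C$, and the number of $S^{n-1}$-summands is $r:=\#\{\Sigma':C\in\Sigma'\}$. Finally, for a sphere-creating $C_i$ the apartment $\Sigma_i$ lies entirely in $\Delta_i$ (its other chambers are strictly closer to $C$) and meets $\Delta_{i-1}$ in a punctured apartment, an $(n-1)$-ball; collapsing that ball shows that the fundamental class $[\Sigma_i]\in\widetilde H_{n-1}(\Delta_i)$ equals, up to sign and modulo the image of $\widetilde H_{n-1}(\Delta_{i-1})$, the $S^{n-1}$-summand created at that stage. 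Inducting over the sphere-creating stages, the classes $[\Sigma']$ with $\Sigma'\ni C$ therefore generate $\widetilde H_{n-1}(\Delta,\ZZ)$; being $r$ generators of a free $\ZZ$-module of rank $r$, they form a basis, and by the Hurewicz theorem a basis of $\pi_{n-1}(\Delta)$ as well.

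The hard step is the lemma of the second paragraph---pinning down $A_i=\bar C_i\cap\Delta_{i-1}$---for which one must control not merely the panels of $C_i$ but all of its lower-dimensional faces; this is exactly where the building axioms, convexity of apartments, and the uniqueness and functoriality of gates are genuinely used. The remaining ingredients---that every chamber lies in a common apartment with $C$, that two opposite chambers lie in a unique common apartment, and the elementary homotopy of cell attachments---are classical, and full details can be found in \cite{AB,Tits,Hatcher}.
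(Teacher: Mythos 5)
Your argument is correct and is precisely the classical Solomon--Tits proof (ordering the chambers by gallery distance from $C$, identifying $\overline{C_i}\cap\Delta_{i-1}$ as the union of the closed descent panels of $C_i$, and recognising the sphere-creating chambers as those opposite $C$, which biject with the apartments through $C$); this is exactly the proof of Theorem~4.73 of \cite{AB}, which the paper cites in lieu of giving an argument. The only point worth polishing is the final Hurewicz step in the low-rank case $n=2$, where $\pi_1(\Delta)$ is a nonabelian free group and the apartment classes form a free basis rather than a basis of an abelian group.
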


\begin{proof}
See Theorem 4.73 of \cite{AB}.
\end{proof}

Recall that a simplicial complex $K$ of finite dimension $n$ is said to be Cohen-Macaulay if for any simplex $\sigma\in K$ (including the empty simplex), the link $\Lk_K(\sigma)$ is spherical of dimension $n-|\sigma|$, where $|\sigma|$ is the size of $\sigma$.
Since in a building the links of simplices are buildings, we conclude that they are Cohen-Macaulay.

\begin{corollary}
\label{coro:CMBuildings}
Let $\Delta$ be a spherical building of rank $n$ with apartment system $\A$, and let $\sigma \in \Delta$ be a simplex.
Then $\Lk_{\Delta}(\sigma)$ is a building of rank $n-|\sigma|$ with apartment system given by $\A_{\Delta} = \{ \Lk_A(\sigma) \tq A \in \A \text{ and } \sigma \in A\}$.

In particular, $\Delta$ and $\Lk_{\Delta}(\sigma)$ are Cohen-Macaulay.
\end{corollary}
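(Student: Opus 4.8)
The statement to prove is Corollary~\ref{coro:CMBuildings}: given a spherical building $\Delta$ of rank $n$ with apartment system $\A$, and a simplex $\sigma\in\Delta$, the link $\Lk_\Delta(\sigma)$ is again a building of rank $n-|\sigma|$, with apartment system $\A_\Delta = \{A\setminus\sigma \tq A\in\A,\ \sigma\in A\}$, and consequently both $\Delta$ and $\Lk_\Delta(\sigma)$ are Cohen--Macaulay.

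The plan is to reduce everything to two inputs: the standard structure theory of buildings (that links of simplices in a building are buildings, realised via the residue/link description, see \cite{AB,Tits}) and the Solomon--Tits theorem (Theorem~\ref{thm:solomonTits}) quoted just above. First I would recall the combinatorial fact that in a building $\Delta$ with apartment system $\A$, the link $\Lk_\Delta(\sigma)$ of a simplex $\sigma$ of type $J$ (equivalently, the residue of the corresponding cotype) is itself a building of rank $n-|\sigma|$; this is the content of, e.g., \cite[Section~4]{AB} and requires only checking the building axioms on the residue, with the apartments of the link being precisely the traces $A\cap\Lk_\Delta(\sigma)$ for apartments $A\ni\sigma$. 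Once this identification of the apartment system is in place, the rank count $n-|\sigma|$ is immediate from the Coxeter-diagram description. The only mild care needed is the bookkeeping that, as simplicial complexes, $A\setminus\sigma$ (the faces of $A$ disjoint from $\sigma$ whose union with $\sigma$ is again a face of $A$) is exactly the apartment $A\cap\Lk_\Delta(\sigma)$ in the link; this is a direct unwinding of the definition of $\Lk$ given in Section~\ref{sec:simplicialComplexes}.

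With the building structure of $\Lk_\Delta(\sigma)$ established, the Cohen--Macaulay conclusion follows formally. By definition, $\Delta$ (dimension $n-1$) is Cohen--Macaulay if for every simplex $\tau\in\Delta$ (including $\tau=\emptyset$) the link $\Lk_\Delta(\tau)$ is spherical of dimension $(n-1)-|\tau| = (n-|\tau|)-1$. By the first part, $\Lk_\Delta(\tau)$ is a building of rank $n-|\tau|$, so Solomon--Tits (Theorem~\ref{thm:solomonTits}) says it is homotopy equivalent to a wedge of spheres of dimension $(n-|\tau|)-1$, i.e. it is spherical of the required dimension. Hence $\Delta$ is Cohen--Macaulay. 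Applying the same reasoning to $\Lk_\Delta(\sigma)$ in place of $\Delta$ — legitimate now that we know it is a building of rank $n-|\sigma|$, and using that a link in a link is a link, $\Lk_{\Lk_\Delta(\sigma)}(\tau) = \Lk_\Delta(\tau\cup\sigma)$ for $\tau\in\Lk_\Delta(\sigma)$ — gives that $\Lk_\Delta(\sigma)$ is Cohen--Macaulay as well.

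I do not expect a serious obstacle here: this corollary is essentially a packaging of well-known facts, and the real work has already been done in citing Solomon--Tits and the structure theory of buildings. The only point that needs a careful sentence rather than a one-line appeal is the explicit identification of the apartment system $\A_\Delta = \{A\setminus\sigma \tq A\in\A,\ \sigma\in A\}$ with the genuine apartment system of the link building — one must check both that each such $A\setminus\sigma$ is an apartment of $\Lk_\Delta(\sigma)$ (it is a Coxeter complex of the right type, being the link of $\sigma$ in the apartment $A$, itself a Coxeter complex) and that these cover enough of $\Lk_\Delta(\sigma)$ to satisfy the building axioms (any two simplices of the link lie in a common apartment, inherited from $\Delta$ by intersecting an apartment containing $\sigma$ together with the two given simplices). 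After that, the Cohen--Macaulay statements are purely formal consequences of Solomon--Tits applied to links, with no computation.
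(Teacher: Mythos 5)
Your proposal is correct and follows essentially the same route as the paper, which simply cites Proposition 4.9 of \cite{AB} (links in buildings are buildings, with the induced apartment system) together with the Solomon--Tits Theorem \ref{thm:solomonTits}; your additional bookkeeping on dimensions and the identity $\Lk_{\Lk_\Delta(\sigma)}(\tau)=\Lk_\Delta(\tau\cup\sigma)$ is exactly the unwinding the paper leaves implicit.
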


\begin{proof}
This follows from Proposition 4.9 of \cite{AB} together with its proof, and Theorem \ref{thm:solomonTits}.
\end{proof}

In conjunction with Corollary \ref{coro:spherical} we get:

\begin{corollary}
\label{coro:sphericalExtendedBuilding}
Let $\Delta$ be a building of rank $n$ and let
$K$ be a simplicial complex such that $\Delta \leq K$ is a full subcomplex and
$K\setminus \Delta = V$ is a discrete set of vertices.
Assume that $\Lk_{\Delta}(v)$ is spherical of dimension $n-1$ for all $v\in V$, and for some chamber $C\in \Delta$, any apartment of $\Delta$ containing $C$ is contained in some $\Lk_{\Delta}(v)$, $v\in V$.
Then $K$ is spherical of dimension $n$.

If in addition $\Lk_{\Delta}(v)$ is Cohen-Macaulay for all $v\in V$ and, for a given system of apartments $\A$, every apartment of $\A$ is contained in the link of some $v\in V$, then $K$ is Cohen-Macaulay.
\end{corollary}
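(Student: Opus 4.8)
The plan is to read the first statement off Corollary~\ref{coro:spherical} applied to $L=\Delta\leq K$, and then to bootstrap it to Cohen--Macaulayness by running the same argument inside every link of $K$. First I would check the four hypotheses of Corollary~\ref{coro:spherical} for $\Delta\leq K$. Hypothesis (1) is the Solomon--Tits Theorem~\ref{thm:solomonTits}, which gives that $\Delta$ is spherical of dimension $n-1$; hypothesis (2) is part of the assumptions. For (3) and (4) I take $C$ to be the distinguished chamber and let $(S_i)_{i\in I}$ be the apartments of $\Delta$ containing $C$: each $S_i$ is an $(n-1)$-dimensional triangulated sphere containing the simplex $C$, and by Theorem~\ref{thm:solomonTits} the classes $[S_i]$ form a basis of $\pi_{n-1}(\Delta,C)$, so in particular generate it, while (4) is exactly the hypothesis that every apartment of $\Delta$ through $C$ lies in some $\Lk_\Delta(v)$. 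Corollary~\ref{coro:spherical} then yields that $K$ is spherical of dimension $n$.

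For the Cohen--Macaulay statement, the key observation is that, $V$ being discrete in $K$, no simplex of $K$ can contain two vertices of $V$ (they would span an edge of $K\setminus\Delta$). I would then compute $\Lk_K(\sigma)$ for an arbitrary $\sigma\in K$, distinguishing two cases. If $\sigma=\sigma_0\cup\{v\}$ with $v\in V$ and $\sigma_0\in\Delta$ (possibly $\sigma_0=\emptyset$), then, using the observation and fullness of $\Delta$ in $K$, one gets $\Lk_K(\sigma)=\Lk_{\Lk_\Delta(v)}(\sigma_0)$, which is spherical of dimension $(n-1)-|\sigma_0|=n-|\sigma|$ because $\Lk_\Delta(v)$ is Cohen--Macaulay of dimension $n-1$ by hypothesis. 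If instead $\sigma\in\Delta$, then $\Lk_\Delta(\sigma)$ is a full subcomplex of $\Lk_K(\sigma)$ whose complement is the discrete set $V_\sigma=\{v\in V:\sigma\cup\{v\}\in K\}$, and one checks $\Lk_{\Lk_\Delta(\sigma)}(v)=\Lk_{\Lk_\Delta(v)}(\sigma)$ for $v\in V_\sigma$.

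In the second case I would apply Corollary~\ref{coro:spherical} once more, now to $\Lk_\Delta(\sigma)\leq\Lk_K(\sigma)$, with the rank $n$ there replaced by $n-|\sigma|$. By Corollary~\ref{coro:CMBuildings}, $\Lk_\Delta(\sigma)$ is a building of rank $n-|\sigma|$ with induced apartment system $\{A\setminus\sigma:A\in\A,\ \sigma\in A\}$, hence spherical of dimension $(n-|\sigma|)-1$ by Theorem~\ref{thm:solomonTits}; and for $v\in V_\sigma$ the link $\Lk_{\Lk_\Delta(\sigma)}(v)=\Lk_{\Lk_\Delta(v)}(\sigma)$ is spherical of dimension $(n-|\sigma|)-1$ by Cohen--Macaulayness of $\Lk_\Delta(v)$. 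For the remaining two hypotheses I fix a chamber $C$ of $\Delta$ with $\sigma\subseteq C$: the apartments of $\Lk_\Delta(\sigma)$ through $C\setminus\sigma$ are the $A\setminus\sigma$ with $A\in\A$ and $C\subseteq A$, their classes generate $\pi_{(n-|\sigma|)-1}(\Lk_\Delta(\sigma),C\setminus\sigma)$ by Theorem~\ref{thm:solomonTits}, and for each such $A$ the covering hypothesis gives $v\in V$ with $A\subseteq\Lk_\Delta(v)$; since $\sigma\in A$ this forces $v\in V_\sigma$ and $A\setminus\sigma\subseteq\Lk_{\Lk_\Delta(v)}(\sigma)=\Lk_{\Lk_\Delta(\sigma)}(v)$. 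Corollary~\ref{coro:spherical} then gives that $\Lk_K(\sigma)$ is spherical of dimension $n-|\sigma|$. Together with the case $\sigma=\emptyset$, which is the first statement (and shows $\dim K=n$), this proves that $K$ is Cohen--Macaulay of dimension $n$.

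I expect the main obstacle to be the bookkeeping in the case $\sigma\in\Delta$: correctly identifying $\Lk_K(\sigma)$ and the links $\Lk_{\Lk_\Delta(\sigma)}(v)$, and, above all, verifying that the apartment-covering hypothesis is inherited by $\Lk_\Delta(\sigma)$ via the explicit description of its induced apartment system in Corollary~\ref{coro:CMBuildings}. Once that is in place, everything reduces to two mechanical applications of Solomon--Tits and Corollary~\ref{coro:spherical}.
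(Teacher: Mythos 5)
Your proposal is correct and follows essentially the same route as the paper: the first claim is Corollary~\ref{coro:spherical} applied to $\Delta\leq K$ with the apartments through $C$ as generating spheres (via Solomon--Tits), and the Cohen--Macaulay claim is obtained by splitting into the two cases $v\in\sigma$ and $\sigma\in\Delta$, identifying $\Lk_K(\sigma)$ as the extension of the building $\Lk_\Delta(\sigma)$ by $V_\sigma$, and rerunning the first argument there using the induced apartment system from Corollary~\ref{coro:CMBuildings}. The only cosmetic difference is that you invoke Corollary~\ref{coro:spherical} a second time explicitly where the paper cites its own first part, which amounts to the same thing.
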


\begin{proof}
That $K$ is spherical follows from the Solomon-Tits theorem and Corollary \ref{coro:spherical} applied to $L = \Delta$ where we use the classes of apartments containing $C$ as generating spheres $S_i$ in item (3) there.
Note that $K = \Delta_{V}$.

We show now that $K$ is Cohen-Macaulay under the additional assumptions.
Let $\A$ be a system of apartments for $\Delta$ such that for all $A \in \A$ there is $v\in V$ with $A \subseteq \Lk_K(v)$. Let $\sigma \in K$ and $v\in V$.
If $\sigma\in \Lk_K(v)$, then $\Lk_K(\sigma\cup \{v\}) = \Lk_{\Lk_{\Delta}(v)}(\sigma)$ is spherical of dimension $n -1 - |\sigma|$ by hypothesis.

On the other hand, $\Lk_K(\sigma) = \Lk_{\Delta}(\sigma)_{\F_{\sigma}}$ where $\F_{\sigma} = \{v'\in V\tq v'\in \Lk_K(\sigma)\}$.
Now, $\Lk_{\Delta}(\sigma)$ is a building by Corollary \ref{coro:CMBuildings}, and a system of apartments for it is given by $\Lk_A(\sigma)$ such that $A\in \A$ and $\sigma\in A$.
For such an apartment $A$, there is $v\in V$ with $A\subseteq \Lk_K(v)$ by hypothesis.
Hence $\sigma \in \Lk_K(v)$, $\Lk_A(\sigma) \subseteq\Lk_K(v)$, and therefore the apartment $\Lk_A(\sigma)$ of $\Lk_{\Delta}(\sigma)$ is contained in $\Lk_{\Lk_{\Delta}(\sigma)}(v)$.
By the first part of this corollary applied to the building $\Lk_{\Delta}(\sigma)$ inside the simplicial complex $\Lk_K(\sigma)$, we conclude that the latter is spherical (of the correct dimension).
\end{proof}

\section{Algebraic groups and main results}
\label{sec:algebraicgroups}

Now we apply the homotopical results described in the previous sections to the case $L = \Delta$ is the building of a finite group of Lie type $H$ and a vertex $v\in K\setminus \Delta$ can be regarded as a field automorphism of $H$ such that $\Lk_{\Delta}(v) = \Delta^v$ (the fixed point subcomplex) contains an apartment of $\Delta$ and it is homotopy equivalent to the building associated with $C_H(v)$.
Below we provide the necessary notation and basic results on linear algebraic groups and groups of Lie type.
Our main reference for algebraic groups is \cite{MalleTesterman}, but sometimes we will refer to \cite{GLS98, Steinberg, St2} for some general facts.
From now on, we denote by $\Delta(G)$ the building associated with a (not necessarily finite) group $G$ with a BN-pair.

Assume that $\KK$ is an algebraically closed field of characteristic $p>0$, and let $\Frob$ denote the Frobenius map which acts as $p$-powering on the entries of the matrices of $\GL_n(\KK)$.
For a (closed) linear algebraic group $\GG \leq \GL_n(\KK)$, we also write $\Frob$ for the induced Frobenius map on $\GG$.
A Steinberg endomorphism of $\GG$ is an algebraic group homomorphism $F:\GG\to \GG$ such that $F^m = \Frob^a$ for some $m,a\geq 1$.
Denote by $\Aut_0^+(\GG)$ the set of Steinberg endomorphisms of $\GG$.
The family of groups of Lie type in characteristic $p$, which we denote by $\Lie(p)$, consists of the finite groups $O^{p'}(\GG^F)$, where $\GG^F$ is the fixed point group of a Steinberg endomorphism $F$ on a simple algebraic group $\GG$ over an algebraically closed field of characteristic $p$:


\begin{center}
    $\Lie(p) = \{ O^{p'}(\GG^F) \tq \GG$ is a simple algebraic group in char $p$ and $F \in \Aut_0^+(\GG)\}$.
\end{center}


By a \textit{split-pair} of $\GG$ we mean a pair $(B,T)$ where $B$ is a Borel subgroup and $T\leq B$ is a maximal torus.
If $\GG$ is connected reductive with a maximal torus $T$, we fix a root system $\Sigma$ and base $\Pi$, with positive root system $\Sigma^+$, and therefore we have a Borel subgroup $B$ spanned by $T$ and the root subgroups $U_{\alpha}$, $\alpha\in \Sigma^+$ (cf. Theorem 11.1 of \cite{MalleTesterman}).
We call the tuple $(B,T,\Sigma,\Pi)$ a \textit{root-setup} for $\GG$, and $(B,T)$ is its associated split-pair.
When $\Sigma$ is indecomposable (e.g., $\GG$ is simple), we write $\Sigma = A_n,B_n$,\ldots to denote its type.


Suppose $\GG$ is a simple simply connected algebraic group with a root-setup $(B,T,\Sigma,\Pi)$.
A Steinberg endomorphism $F$ of $\GG$ can be written, up to an inner-automorphism, as $\gamma \Frob^s$, where $s>0$ and $\gamma$
is a group-theoretic automorphism of $\GG$ that arises from a permutation of the set of simple roots $\Pi$ that induce a symmetry of the Coxeter diagram.
Although $\gamma$ is a bijective endomorphism of algebraic groups, it might not be invertible as an algebraic group map.
For instance, in the cases $\Sigma = B_2,F_4,G_2$ with characteristic $p=2,2,3$ respectively, $\gamma$ can arise from the permutation that interchanges the long and short roots, which is an order-$2$ symmetry of the Coxeter diagram. In this case, we will use the notation $\gamma = \psi$ (here following \cite{GLS98}), and it holds $\psi^2 = \Frob$.
See, for example, Theorem 22.5 of \cite{MalleTesterman}.
For an arbitrary simple algebraic group $\GG$, we obtain the same decomposition for a Steinberg endomorphism $F$ after passing through a simply connected group $\GG_{sc}$ with an isogeny $\GG_{sc}\to \GG$ (cf. Proposition 9.15 of \cite{MalleTesterman} and Theorem 1.15.7 of \cite{GLS98}).

Moreover, $\gamma$ and $\Frob$ commute, and $B,T$ are (by construction) stable by $\gamma$ and $\Frob$.
If $F = \gamma \Frob$, we will say that $F$ is a Steinberg endomorphism in \textit{standard form} (with respect to the fixed root-setup).
Recall that if $x$ is an inner automorphism of $\GG$, then $\GG^{xF}$ and $\GG^F$ are isomorphic (see Corollary 21.8 of \cite{MalleTesterman}).
Therefore, to study the groups $O^{p'}(\GG^F)$ we can (and we will) assume that $F$ is in standard form.
Finally, we say that $O^{p'}(\GG^F)$ is a \textit{twisted group} if $\gamma \neq 1$, and \textit{untwisted} otherwise.

Let $H :=  O^{p'}(\GG^F)$.
If $F = \gamma \Frob^s$ is in standard form as above, any power of $\Frob$ induces an automorphism on $\GG^F$ and on $H$, which we call the \textit{canonical field automorphisms} of $H$.
We have a group homomorphism $\ZZ \to \Aut(H)$ given by $n\mapsto (\Frob|_{H})^n$, and we denote its image by $\Phi_{H}$.
This is a cyclic group of order $ds$, where $d$ is the order of the symmetry induced from $\gamma$ if $\gamma\neq \psi$ in the cases $\Sigma = B_2,F_4,G_2$ with $p=2,2,3$ respectively.
In such cases, $\Phi_{H}$ has order $2s+1$.

A \textit{field automorphism} of $H$ is an element $x \in \Aut(H)$ which is $\Aut(H)$-conjugate to an element of $\Phi_{H}$
(see also Section 10 of \cite{Steinberg}).
Note that this terminology differs from \cite{GLS98}, where a field automorphism for $H$ is a field automorphism in our sense but in addition, in the twisted cases for $\gamma\neq \psi$, its order must be prime to the order $d$ of the symmetry induced by $\gamma$ on the Coxeter diagram.
In such cases, if the order of $x$ is divisible by $d$ then 
\cite[Definition 2.5.13(c)]{GLS98} calls $x$ a graph automorphism.
We will not adopt this terminology here.

\begin{remark}
\label{rk:BpGandBuilding}
Let $F$ be a Steinberg endomorphism of a connected reductive algebraic group $\GG$ in characteristic $p>0$, with $\GG^F$ a finite group and $H = O^{p'}(\GG^F)$.
It is well-known that $F$ stabilises a split-pair $(B,T)$ of $\GG$ (cf. Corollary 21.12 of \cite{MalleTesterman}), and for any such a pair, 
$B^F,N_{\GG}(T)^F$ yields a BN-pair for $\GG^F$ (Theorem 24.10 of \cite{MalleTesterman}).
The poset of parabolic subgroups of $\GG^F$ gives rise to the building of $\GG^F$ defined from this BN-pair. This also defines a building for $H$, which we shall denote by $\Delta(H)$, where the parabolic subgroups are the parabolic subgroups of $\GG^F$ intersected with $H$.
Moreover, such a poset of parabolic subgroups is anti-isomorphic to the poset of radical $p$-subgroups of $H$ via the map $P\mapsto O_p(P)$ with inverse $R\mapsto N_{H}(R)$.
Therefore $\Delta \Bp(H) \cong \Delta(H)$ (and indeed this isomorphism is $\Aut(H)$-invariant).

Let $S$ be the set of simple reflections generating the Weyl group $W = N_{\GG}(T)/T$.
Then each proper subset $I$ of $S$ defines a \textit{standard parabolic subgroup} $P_I$ of $\GG$ (see Proposition 12.2 of \cite{MalleTesterman}).
Moreover, the order complex of the poset $\{ w^{-1} P_I w \tq w\in W, \ I\subsetneq S\}$ yields a \textit{standard apartment} for $\Delta(\GG)$.

Now, $F$ permutes the set $S$, and each $F$-invariant subset $I$ of $S$ defines an $F$-stable parabolic $P_I$, and $P_I^F$ is a \textit{standard parabolic subgroup} of $\GG^F$ (and indeed it is a parabolic subgroup for the BN-pair $B^F,N_{\GG}(T)^F$).
Hence, the order complex of the subposet of parabolics
\begin{equation}
    \label{eq:standardParabolic}
    \{ w^{-1} ((P_I)^F\cap H) w \tq w\in W^F,\ I\subsetneq S \text{ is $F$-invariant} \}, 
\end{equation}
yields a \textit{standard apartment} for $\Delta(H)$ (see p.320 of \cite{AB} and Section 26.1 of \cite{MalleTesterman}).
\end{remark}

\begin{lemma}
\label{lm:fieldAutomorphismsAndFixedPointsBuilding}
Let $x$ be a field automorphism of a group of Lie type $H\in \Lie(p)$.
Then the fixed point subcomplex $\Delta(H)^x$ contains an apartment that is $\Aut(H)$-conjugate to a standard apartment of $\Delta(H)$.
Moreover, if $|x|=p$ then $\Delta(H)^{x}$ is homotopy equivalent to $\Delta\Bp( O^{p'}( C_H(x) ) )$, which is a building of the same type as $\Delta(H)$.
\end{lemma}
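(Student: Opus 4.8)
The plan is as follows. Write $H=\GG^F$ with $\GG$ simple and $F$ a Steinberg endomorphism in standard form relative to a fixed root-setup $(B,T,\Sigma,\Pi)$, with $S$ the associated set of simple reflections. By definition a field automorphism of $H$ is $\Aut(H)$-conjugate to a canonical field automorphism $(\Frob|_H)^n\in\Phi_{\GG^F}$, and since $\Aut(H)$ acts on $\Delta(H)\cong\Delta\Bp(H)$ (Remark~\ref{rk:BpGandBuilding}) permuting apartments, the first assertion is invariant under replacing $x$ by an $\Aut(H)$-conjugate; so it suffices to treat $x=(\Frob|_H)^n$.

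For such an $x$ I would show that $\Frob^n$ fixes pointwise the standard apartment $A$ of $\Delta(H)$ described in Remark~\ref{rk:BpGandBuilding}, i.e.\ the order complex of the poset~\eqref{eq:standardParabolic}. Since $\Frob^n$ stabilises the root-setup it stabilises each standard parabolic $P_I$ of $\GG$ ($I\subsetneq S$), and since it commutes with $F$ it stabilises $(P_I)^F$ for every $F$-invariant $I$. Using Lang--Steinberg, pick for each $w\in W^F$ a representative $\dot w\in N_{\GG}(T)^F$; as $\Frob$ acts trivially on $W=N_{\GG}(T)/T$ (the torus $T$ being $\Frob$-split), $\Frob^n(\dot w)$ again represents $w$, so $\Frob^n(\dot w)\dot w^{-1}\in T^F\leq B^F\leq (P_I)^F$, and hence $\Frob^n$ fixes the parabolic $\dot w^{-1}(P_I)^F\dot w$. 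These subgroups are exactly the elements of~\eqref{eq:standardParabolic}, and a poset automorphism fixing a chain setwise fixes it pointwise; thus $A\subseteq\Delta(H)^x$, which proves the first claim.

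For the ``moreover'' part assume $|x|=p$. Then $\langle x\rangle$ is a $p$-group acting on $H$, so Remark~\ref{rk:fixedPointsByPgroup} gives $\Bp(H)^x\simeq\Bp(C_H(x))$. Using the $\Aut(H)$-equivariant isomorphism $\Delta(H)\cong\Delta\Bp(H)$ and the admissibility of the $\langle x\rangle$-action on the poset $\Bp(H)$, passing to $x$-fixed points yields
\[ \Delta(H)^x \;\cong\; \Delta\big(\Bp(H)^x\big)\;\simeq\;\Delta\Bp\big(C_H(x)\big). \]
Finally, by the structure theory of centralisers of field automorphisms of order $p$ of groups in $\Lie(p)$ in characteristic $p$ (see \cite[Section~2.5]{GLS98}), $O^{p'}(C_H(x))$ is a central quotient of a group in $\Lie(p)$ of the same Coxeter type as $H$ --- roughly, the group over the subfield fixed by $x$. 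Since $C_H(x)$ and $O^{p'}(C_H(x))$ have the same $p$-subgroup posets up to homotopy, $\Delta\Bp(C_H(x))\simeq\Delta\Bp(O^{p'}(C_H(x)))$, which by Remark~\ref{rk:BpGandBuilding} is a building of that type. Hence $\Delta(H)^x\simeq\Delta\Bp(C_H(x))$ is a building of the same type as $\Delta(H)$.

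The main obstacle is this last identification: pinning down $O^{p'}(C_H(x))$ as a group of Lie type over a subfield with the \emph{same} Coxeter/building type. This is transparent for untwisted $H$ (where the centraliser is literally the group over the fixed subfield), but requires the standard case analysis in the twisted cases, for $p=2$, and when $x$ involves the exceptional symmetry $\psi$ of $B_2$, $F_4$, $G_2$ --- exactly the points where one must lean on the cited literature rather than a uniform argument.
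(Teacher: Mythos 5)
Your proof is correct and follows essentially the same route as the paper's: reduce to a canonical field automorphism, check that powers of $\Frob$ fix the standard apartment of Eq.\ (\ref{eq:standardParabolic}) pointwise, and then combine Remark \ref{rk:fixedPointsByPgroup} with the description of centralisers of order-$p$ field automorphisms (Proposition \ref{prop:GLSprops}) for the ``moreover'' part. Your verification that $\Frob^n$ fixes the $W^F$-conjugates of the standard parabolics, via representatives $\dot w\in N_{\GG}(T)^F$ and the observation that $\Frob^n(\dot w)\dot w^{-1}\in T^F$, is just a more explicit version of the one-line justification in the paper.
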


\begin{proof}
Suppose that $H = O^{p'}(\GG^F)$, where $F$ is a Steinberg endomorphism of a simple algebraic group $\GG$ over an algebraically closed field of characteristic $p>0$, and fix a root-setup $(B,T,\Sigma,\Pi)$ for $\GG$.
Without loss of generality, we may assume that $F$ is in standard form, so $F=\gamma \Frob^s$ for some $s> 0$.
Let $W = N_{\GG}(T)/T$ be the corresponding Weyl group and $S$ the set of simple reflections generating $W$.
Then $\gamma$ permutes $S$ and $\Frob$ centralises $S$ (and hence $W$).
Let $Z$ be the set of parabolic subgroups of $H$ defined in Eq. (\ref{eq:standardParabolic}), so $\Delta(Z)$ is a standard apartment for $\Delta(H)$.
Up to $\Aut(H)$-conjugation, we can assume $x = F'|_H:H\to H$, with $F' = \Frob^a$ for some $a>0$.
Since $\Frob$ acts trivially on $W$ and stabilises $B$, we see that $x$ fixes every vertex in $Z$.
Therefore $\Delta(Z) \leq \Delta(H)^{x}$.

Suppose now that $|x|=p$.
The homotopy equivalence $\Delta(H)^x \simeq \Delta\Bp(C_H(x))$ follows from Remarks \ref{rk:fixedPointsByPgroup} and \ref{rk:BpGandBuilding}.
Finally, by Proposition \ref{prop:GLSprops} we have that $O^{p'}(C_H(x))\in \Lie(p)$ and its building has the same type as the building of $H$, with $\Delta(O^{p'}(C_H(x))) \cong \Delta\Bp(O^{p'}(C_H(x))) \cong \Delta\Bp(C_H(x))$.
This proves the Moreover part.
\end{proof}

Recall that for $H\in \Lie(p)$ we have $\Aut(H) \leq \Aut(H/Z(H))$, and the latter is a split-extension of $\Inndiag(H)$ by $\Phi_H \Gamma_H$ (see Theorems 2.5.12 and 2.5.14 of \cite{GLS98}).
Here $\Inndiag(H)$ denotes the group of inner-diagonal automorphisms of $H$, which has order prime to $p$, as well as $Z(H)$.
As defined above, $\Phi_H$ is the group of field automorphisms induced by the Frobenius map (after fixing a suitable root-setup and a Steinberg endomorphism in standard form), and $\Gamma_H$ is the set of graph automorphisms induced from symmetries of the underlying Coxeter diagram of $H$.
We write 
\begin{equation}
\Out(H)^* := \Aut(H) / \Inndiag(H),
\end{equation}
which is a subgroup of $\Phi_H \Gamma_H$ (with equality when $H=O^{p'}(\GG^F)$ and $\GG$ is simple and either simply connected or adjoint), and contains $\Phi_H$ as a normal subgroup.
We shall denote by $E^*$ and $x^*$ the image in $\Out(H)^*$ of a group $E$ and element $x$, respectively, inducing automorphisms on $H$.
In particular, if $x\in \Aut(H)$ induces a field automorphism on $H$ then $x^* \in \Phi_H$.
Recall $\Out(H)^*$ is abelian except for $H = D_4(q)$.
For more details, see Theorem 2.5.14 of \cite{GLS98}.
We warn that $\Gamma_H$ does not always define a subgroup of $\Out(H)^*$, and $\Gamma_H = 1$ if $H$ is twisted.

\bigskip

If a group $E$ acts on a group $H$, we say that $x\in E$ \textit{induces an outer automorphism on $H$} if the image of $x$ under the map $E\to \Aut(H)$ does not lie in $\Inn(H)$, the group of inner automorphisms of $H$.
We say that $E$ induces outer automorphisms on $H$ if every non-trivial element $x\in E$ induces an outer automorphism on $H$.

Recall that a subgroup $H$ of $G$ is self-centralising if $C_G(H)\leq H$, i.e., $C_G(H) = Z(H)$.
This property implies that, if $E\leq G$ normalises $H$ and $E \cap H = 1$, then $E \groupiso EH/H \leq \Out(H)$.
If in addition $H$ is normal in $G$, the elements of $\F_G(H)$ induce outer automorphisms on $H$.
An important example of a normal self-centralising subgroup is the generalised Fitting subgroup of $G$, which we will denote by $F^*(G)$.
From the properties of the components of $G$ and the Fitting subgroup, one can show that if $H$ is self-centralising, normal and quasisimple (i.e., $H=[H,H]$ is perfect and $H/Z(H)$ is simple) then $H = F^*(G)$.

We will be interested in the case 
that $G$ contains a self-centralising normal subgroup $H$ such that
$H \in \Lie(p)$:

\begin{definition}
\label{def:scnlp}
Let $G$ be a finite group and fix a prime $p$.
An $\name_p$-subgroup of $G$ is a self-centralising normal subgroup which is isomorphic to a finite group of Lie type in characteristic $p$.
\end{definition}

Note that if $H$ is an $\name_p$-subgroup of $G$, then $H = F^*(G)$ if and only if $H$ is quasisimple (see Theorem 2.2.7 of \cite{GLS98}).
Conversely, a group $H\in \Lie(p)$ is quasisimple except for the cases described in Table \ref{tab:excludedCases}.
In those cases, we see that either $[H,H]$ is a non-abelian simple group or $H$ is solvable.

From now on, we will adopt the following notation.
We write $H = {}^d\Sigma(q)$ if $H$ arises as the $O^{p'}$-subgroup of the fixed points of a Steinberg endomorphism $F = \gamma \Frob^s$ of a simple algebraic group $\GG$ with root-setup $(B,T,\Sigma,\Pi)$ such that $\gamma$ induces a symmetry of the Coxeter diagram of order $d$, and $q = p^{s}$.
In the special case that $\gamma = \psi$ and $\Sigma = B_2,F_4,G_2$ with $p=2,2,3$ resp., we write $q = p^{2s+1}$.

\begin{table}[ht]
\def\arraystretch{1.2}
\begin{tabular}{|c|c|c|c|}
    \hline
    $H$ & $p$ & $[H,H]$ & $\Out(H)$  \\
    \hline
    $A_1(2) \cong \Sym_3$ & $2$ &  Solvable & $1$\\
    $A_1(3) $ & $3$ & Solvable & $\Outdiag(H)\groupiso C_2$ \\
    ${}^2A_2(2)$ & $2$ & Solvable & $\Outdiag(H):\Phi_H \groupiso \Sym_3$ \\
    ${}^2B_2(2)$ & $2$ & Solvable & $1$\\
    $\PSp_4(2)\cong \Sym_6$ & $2$ & $\Alt_6$, simple & $\Gamma_H\groupiso C_2$ \\
    $G_2(2)\cong \Aut(\PSU_3(3))$ & $2$ & $\PSU_3(3)$, simple & $1$ \\
    ${}^2F_4(2)$ & $2$ & Tits group, simple & $1$\\
    ${}^2G_2(3)\cong \Aut(\PSL_2(8))$ & $3$ & $\PSL_2(8)$, simple & $1$\\
    \hline
\end{tabular}
\caption{In all cases, $|H:[H,H]|=p$ and $H\in \Lie(p)$.}
\label{tab:excludedCases}
\end{table}

In the following example, we analyse the homotopy type of the Quillen poset of the almost simple groups appearing in Table \ref{tab:excludedCases}.
Recall that $n_p$ denotes the $p$-part of a non-zero integer $n$, i.e., the largest power of $p$ dividing $n$.

\begin{example}
\label{ex:titsGroup}
Let $p = 2$ and let $H = {}^2F_4(2)$.
Then $Z(H) = 1$, $\Aut(H) = H$, and if $H$ is an $\name_2$-subgroup of a group $G$, then $H=G$.
However, $H$ is not simple, but $[H,H]$, the Tits group, is simple of index $2$ in $H$ (see Table \ref{tab:excludedCases}).
Since $[H,H]$ contains all the involutions of $H$, it follows that $\A_2(H) = \A_2([H,H])$.
Therefore $\A_2([H,H])$ is $H$-homotopy equivalent to a wedge of $1$-spheres, and the number of such spheres is $|H|_2 = 2^{12}$.

More generally, if $H$ is one of the non-solvable groups of the first column of Table \ref{tab:excludedCases} and $p$ is the index of $[H,H]$ in $H$, then it is not hard to show that $\Ap([H,H]) \simeq_H \Ap(H)\simeq_H  \Delta(H)$.
Therefore $\Ap([H,H])$ is homotopy equivalent to a wedge of $|H|_p$ spheres of dimension equal to the Lie rank of $H$ minus one.
\end{example}

Next, we show that $\name_p$-subgroups are unique.

\begin{lemma}
\label{lm:uniqueLiep}
Let $G$ be a finite group and $p$ a prime.
Then $G$ contains at most one $\name_p$-subgroup.
\end{lemma}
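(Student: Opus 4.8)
The plan is to exploit the self-centralising hypothesis together with the structure theory of $\Lie(p)$-groups, and to distinguish the quasisimple case from the handful of exceptions in Table \ref{tab:excludedCases}. Suppose $H_1$ and $H_2$ are both $\name_p$-subgroups of $G$. Since both are normal in $G$, so is $[H_1,H_2]\leq H_1\cap H_2$, and the commutator $[H_1,H_2]$ is normal in both $H_1$ and $H_2$. If $H_1$ is quasisimple, then $H_1=F^*(G)$ and $C_G(H_1)=Z(H_1)$; since $H_2$ normalises $H_1$, it either centralises $H_1$ (forcing $H_2\leq Z(H_1)\leq H_1$, hence $H_2\leq H_1$, and then equality by symmetry using $C_G(H_2)=Z(H_2)$) or $[H_1,H_2]$ is a non-trivial normal subgroup of the quasisimple group $H_1$, hence $[H_1,H_2]H_1/Z(H_1)=H_1/Z(H_1)$, i.e. $H_1=[H_1,H_2]Z(H_1)\leq H_2 Z(H_1)$. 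Playing this off against the symmetric statement and using that $Z(H_i)$ is a $p'$-group while $H_i=O^{p'}(H_i)$ in the quasisimple case should pin down $H_1=H_2$.

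First I would therefore handle the quasisimple case cleanly: if both $H_1,H_2$ are quasisimple then both equal $F^*(G)$, so they coincide. This reduces the problem to the situation where at least one $H_i$, say $H_1$, is one of the non-quasisimple groups of Table \ref{tab:excludedCases}. In each such case $H_1$ is either solvable (the rows $A_1(2)$, $A_1(3)$, ${}^2A_2(2)$, ${}^2B_2(2)$) or has $[H_1,H_1]$ a non-abelian simple group of index $p$ (the remaining rows). In the latter subcase $E:=[H_1,H_1]$ is the unique component of $G$ inside $H_1$, hence $E\normal G$ and $E=F^*(G)$ or at least $E$ is characteristic in $F^*(G)$; the same analysis applied to $H_2$ shows $F^*(G)$ contains a unique such component, which must be $[H_2,H_2]$, so $[H_1,H_1]=[H_2,H_2]=:E$. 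Then both $H_i\leq N_G(E)$, and since $C_G(E)\leq C_G(\text{something})$... more precisely, because $H_1$ is self-centralising, $C_G(E)\cap H_1=C_{H_1}(E)=Z(H_1)$ is trivial (these groups have trivial centre by the table), and in fact $C_G(H_1)=1$; one then argues $C_G(E)$ is a $p'$-group (as $\Out(E)$ has cyclic Sylow structure and $H_1/E$ has order $p$) meeting $H_1$ trivially, so $C_G(E)$ centralises $H_1=E\langle t\rangle$ for a suitable $p$-element $t\in H_1$... Here one must be slightly careful: the cleanest route is to observe $H_i=O^{p'p}(N_G(E))$ or to directly compute $H_i/E\leq\Out(E)$ and note that in each row of the table the relevant subgroup of $\Out(E)$ of order $p$ is \emph{unique} (e.g. for $E=\PSU_3(3)$ inside $G_2(2)$, $\Out(\PSU_3(3))\cong C_2$; for the Tits group $\Out\cong C_2$; for $\PSL_2(8)$ inside ${}^2G_2(3)$, $\Out\cong C_3$), forcing $H_1=H_2$ as preimages in $N_G(E)$ of the same subgroup of $\Out(E)$.

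The remaining subcase is when $H_1$ is one of the four solvable rows; then $|H_1|$ is small and $\Omega_1(H_1)=H_1$, and $O_p(H_1)$ or $O^{p'}(H_1)$ is a specific characteristic subgroup. I would again locate a canonical characteristic subgroup of $G$ — e.g. $O_{p'}(G)$ or $F^*(G)$ — inside each $H_i$ and compare: since $H_1$ is self-centralising and normal, $F^*(G)\leq H_1$ (as $C_G(F^*(G))\leq F^*(G)$ forces $F^*(G)\not\leq\ker$ of the action, and $F^*(G)$ has no proper normal subgroup of... ) so $F^*(G)\leq H_1\cap H_2$, and then $H_i/F^*(G)$ embeds in $\Out(F^*(G))$; the table shows these outer automorphism groups are tiny ($1$ or $C_2$ or $\Sym_3$), and the subgroup cut out by the defining properties of an $\name_p$-subgroup (self-centralising, in $\Lie(p)$) is forced. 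I expect the main obstacle to be exactly this bookkeeping in the non-quasisimple cases of Table \ref{tab:excludedCases}: one must check case-by-case that within $N_G(F^*(G))$ there is at most one subgroup with the required isomorphism type that is self-centralising in $G$, and this uses the explicit structure of $\Out$ and of the $p$-local structure recorded in the table, rather than any single uniform argument. The quasisimple case, by contrast, is immediate from $H=F^*(G)$.
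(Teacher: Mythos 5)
Your overall strategy --- dispose of the quasisimple case via $H=F^*(G)$ and then grind through the non-quasisimple exceptions of Table \ref{tab:excludedCases} --- is essentially the paper's, but two steps do not go through as written. First, in the subcase where $E:=[H_1,H_1]$ is simple, your key claim that ``in each row of the table the relevant subgroup of $\Out(E)$ of order $p$ is unique'' is false for precisely the row you omit from your list: $H_1=\PSp_4(2)\cong\Sym_6$ with $E=\Alt_6$, where $\Out(\Alt_6)\cong C_2\times C_2$ has three subgroups of order $2$, corresponding to the three index-$2$ overgroups $\Sym_6$, $\PGL_2(9)$ and $M_{10}$ of $\Alt_6$ inside $\Aut(\Alt_6)$. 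To force $H_1=H_2$ here you must additionally use that $H_2\in\Lie(2)$ and that among these three extensions only $\Sym_6=\PSp_4(2)$ lies in $\Lie(2)$; this is exactly the case the paper isolates and treats separately. (Your identification of $H_i$ with the full preimage in $N_G(E)$ of a subgroup of $\Out(E)$ also silently needs $C_G(E)=1$; this does hold --- $C_G(E)$ is normal, meets $H_1$ in $C_{H_1}(E)=1$, hence centralises $H_1$ and is trivial --- but your text trails off at this point rather than closing the argument. You also do not address the mixed case where $H_2$ is quasisimple, though that is quickly excluded since it would force $H_2=E\notin\Lie(p)$.)

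Second, the solvable branch is not actually carried out. The assertion $F^*(G)\leq H_1$ is true (it is the standard fact that $F^*(G)=F^*(N)$ for a self-centralising normal subgroup $N$), but your justification for it is left as a fragment, and even granting it, the comparison of $H_1$ and $H_2$ is deferred to unspecified ``bookkeeping''. For the four solvable rows one genuinely has to verify, for each admissible $G$, that there is at most one self-centralising normal subgroup of the required isomorphism type; the paper does this by splitting on $\Out(H)=1$ versus $\Out(H)\neq 1$ (the former forces $G=H$, and one checks $H$ has no proper self-centralising $\Lie(p)$-subgroup; the latter forces $H_1,H_2$ into the same row of the table, after which the remaining verification is done by computer). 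As it stands, your proposal identifies the correct reduction but leaves both the $\Sym_6$ row and the entire solvable case unproved.
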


\begin{proof}
By way of contradiction, suppose that $G$ contains two distinct $\name_p$-subgroups $H$ and $K$.

By the discussion above on quasisimplicity and the generalised Fitting subgroup, we can assume without loss of generality that $H$ is not quasisimple, i.e., $H$ is one of the groups given in the first column of Table \ref{tab:excludedCases}.
Note that $K/K\cap H \groupiso KH/H$ embeds into $\Out(H)$, and the condition $[H,K]=1$ would imply $K\leq Z(H)$ and $H\leq Z(K)$, that is, $H,K$ are abelian, which never holds.
Thus $[H,K]\neq 1$, and in consequence $H\cap K\neq 1$.
Moreover, since $\Out(K),\Out(H)$ are solvable, we see that if $H$ or $K$ is solvable then $G$ is, and hence both $H,K$ are solvable (that is, $H$ is solvable if and only if $K$ is).

Suppose that $[H,H]$ is simple (and hence $K$ is not solvable).
Then $H\cap K = H$ or $[H,H]$, and since $[H,H]\notin \Lie(p)$, we see that $[H,H]<K$.
So $1\neq K/[H,H] \leq \Out(H)$, which forces $H = \PSp_4(2)$ and $H\leq G\leq \Aut(H)$.
From this, it is not hard to show that $H = K$.
Therefore, both $H$ and $K$ are solvable as in the first column of Table \ref{tab:excludedCases}.

Now, $\Out(H) = 1$ leads to $G/H = 1$, so $G = H$ and $K\leq H$.
But a case-by-case inspection shows that $H$ does not contain a proper self-centralising normal subgroup $K\in \Lie(p)$.
Therefore $\Out(H) \neq 1$, which implies that $H,K$ lie in the same row of Table \ref{tab:excludedCases}.
Again, a case-by-case inspection with GAP shows that one must have $H = K$ (to see this, pass through the quotient $G/Z(H)$ and analyse the possibilities of $G$ and $K/(Z(H)\cap K)$ as a normal subgroup of $G/Z(H)$).
\end{proof}

In light of the previous lemma, we can talk about the ``unique" (if it exists) self-centralising normal subgroup $H\in \Lie(p)$ of a group $G$.
Note that $G$ might still contain a self-centralising normal subgroup $K\in\Lie(r)$ for a different prime $r$.
For example, $H = \PSp_4(2)\in \Lie(2)$ and $K = [H,H]\groupiso\Alt_6 \groupiso \PSL_2(9)\in \Lie(3)$ are both self-centralising normal subgroups of any $G$ such that $H \leq G\leq \Aut(H)$.

\bigskip

Next, we introduce a useful notation that arranges different types of order-$p$ outer automorphisms of a group of Lie type in characteristic $p$.

\begin{definition}
Let $G$ be a finite group with an $\name_p$-subgroup $H$.
Then we write
\begin{align*}
    \F_f & := \{ E\in \F_G(H) \tq |E|=p \text{ induces field automorphisms on }H\},\\
    \F_g & := \{ E\in \F_G(H) \tq |E|=p, \ E\notin \F_f \text{ and } O_p(C_{H}(E)) = 1\},\\
    \F_c & := \{ E\in \F_G(H) \tq |E|=p, \ O_p(C_{H}(E)) \neq  1\}.
\end{align*}
\noindent
Note that $\F_f$, $\F_g$ and $\F_c$ are (possibly empty) antichains, and they are uniquely determined by $p$ and $G$ by Lemma \ref{lm:uniqueLiep}.
\end{definition}

The elements of $\F_g$ are roughly the order-$p$ subgroups inducing \textit{graph} or \textit{graph-field} automorphisms on $H$ whose image in $\Out(H)^*$ do not lie in $\Phi_{H}$ (cf. Definition 2.5.13 of \cite{GLS98}).

Below we quote the crucial parts of Propositions 4.9.1 and 4.9.2 of \cite{GLS98} on centralisers of order-$p$ outer automorphisms of groups of Lie type in characteristic $p$.

\begin{proposition}
\label{prop:GLSprops}
Let $H\in \Lie(p)$ and let $x\in \Aut(H)\setminus \Inndiag(H)$ be an order-$p$ element.
The following hold:
\begin{enumerate}
    \setlength{\itemsep}{0.4em}
    \item (Field) If $x^* \in \Phi_H$ then exactly one of the following holds:
    \begin{enumerate}
        \item $d\neq p$, $x$ is a field automorphism of $H$ and $O^{p'}(C_H(x)) \groupiso {}^d\Sigma(q^{1/p}) \in \Lie(p)$.
        Further, if $Z(H) = 1$ then $Z(C_H(x)) =1$ and $C_{\Inndiag(H)}(x) = \Inndiag({}^d\Sigma(q^{1/p}))$.
        \item $d = p$ and $O_p(C_H(x)) \neq 1$.
        \item $d = p$, $x$ is a field automorphism of $H$ and $O^{p'}(C_H(x))\in \Lie(p)$ is described in Table \ref{tab:twistedCases}.
        \begin{table}[ht]
            \def\arraystretch{1.2}
            \centering
            \begin{tabular}{|c|c|c|}
            \hline
            $p$ & $H$ & $O^{p'}(C_H(x))$ \\
            \hline
            $2$ & ${}^2A_{n-1}(q)$ & $B_{[n/2]}(q)$\\
            $2$ & ${}^2D_{n}(q)$ & $B_{n-1}(q)$\\
            $2$ & ${}^2E_6(q)$ & $F_4(q)$\\
            $3$ & ${}^3D_4(q)$ & $G_2(q)$.\\
            \hline
            \end{tabular}
            \caption{Field automorphisms in the twisted cases.}
            \label{tab:twistedCases}
        \end{table}
    \end{enumerate}
    \noindent
    In cases (a) and (c), $H$ and $O^{p'}(C_H(x))$ have the same Lie rank, and if $y\in \Aut(H)\setminus \Inndiag(H)$ also has order $p$, $y^* \in \Phi_H$ and case (b) does not hold for $y$, then $\gen{x}$ and $\gen{y}$ are $\Inndiag(H)$-conjugate.
    
    \item (Non-field) If $x^* \in \Out(H)^* \setminus \Phi_H$ then $d = 1$ (so $H$ is untwisted), $x$ is not a field automorphism of $H$, $p=2$ or $3$, and either $O_p(C_H(x)) \neq 1$ or $O^{p'}(C_H(x))\in \Lie(p)$ is described in Table \ref{tab:decreasingRanks}. 

    Moreover, if $y\in \Aut(H)\setminus \Inndiag(H)\Phi_H$ has order $p$ and $y$ falls in the same row as $x$ in Table \ref{tab:decreasingRanks} then $\gen{x}$ and $\gen{y}$ are $\Inndiag(H)$-conjugate, except when $H = D_4(q)$ for $p=2$ in the graph case, or $p=3$ in the graph-field case.
    In those cases $\gen{x},\gen{y}$ are $\Inndiag(H)\Gamma_H$-conjugate, with $\Gamma_H \cong \Sym_3$.
    
    \begin{table}[ht]
    \centering
    \def\arraystretch{1.2}
    \begin{tabular}{|c|c|c|c|c|c|}
    \hline
    $p$ & $H$ & Rank of $H$ & Type of $x$ & $O^{p'}(C_{H}(x))$ & Rank of $O^{p'}(C_{H}(x))$\\
     \hline
   $2$ & $B_2(2^{2a+1})$ & $2$ & graph & ${}^2B_2(2^{2a+1})$ & $1$\\
   $2$ & $F_4(2^{2a+1})$ & $4$ & graph & ${}^2F_4(2^{2a+1})$ & $2$\\
   $2$ & $A_{n-1}(2^a)$, $n\geq 3$ & $n-1$ & graph & $B_{[n/2]}(2^a)$ & $[n/2]$\\
   $2$ & $A_{n-1}(2^{2a})$, $n\geq 3$ & $n-1$ & graph-field & ${}^2A_{n-1}(2^{a})$ & $[n/2]$  \\
   $2$ & $D_n(2^a)$, $n\geq 4$ & $n$ & graph & $B_{n-1}(2^a)$ & $n-1$ \\
   $2$ & $D_n(2^{2a})$, $n\geq 4$ & $n$ & graph-field & ${}^2D_n(2^{a})$ & $n-1$\\
   $3$ & $D_4(3^a)$ & $4$ & graph & $G_2(3^a)$ & $2$\\
   $3$ & $D_4(3^{3a})$ & $4$ & graph-field & ${}^3D_4(3^{a})$ & $2$\\
   $2$ & $E_6(2^a)$ & $6$ & graph & $F_4(2^a)$ & $4$\\
   $2$ & $E_6(2^{2a})$ & $6$ & graph-field & ${}^2E_6(2^{a})$ & $4$\\
    \hline
    \end{tabular}
    \caption{Centralisers for $x\in \Aut(H)$ of order $p$ such that $x^* \in \Out(H)^* \setminus \Phi_{H}$ and $O_p(C_H(x))=1$.}
    \label{tab:decreasingRanks}
    \end{table}
\end{enumerate}
\end{proposition}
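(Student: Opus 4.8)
The plan is to deduce the statement from \cite[Propositions~4.9.1 and 4.9.2]{GLS98} together with the description of $\Aut(H)$ in \cite[Theorem~2.5.14]{GLS98}; essentially everything is a reorganisation of that material, so the work lies in matching the case divisions there with ours.

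I would begin with the basic dichotomy. Write $H = {}^d\Sigma(q)$ with $q = p^s$ and recall that $\Phi_H$ is cyclic and normal in $\Out(H)^* = \Aut(H)/\Inndiag(H)$. Since $x \in \Aut(H)\setminus\Inndiag(H)$ has order $p$, its image $x^*$ is a nontrivial order-$p$ element of $\Out(H)^*$, and either $x^* \in \Phi_H$ (item~(1)) or $x^*$ has nontrivial image in $\Out(H)^*/\Phi_H$ (item~(2)). In the latter case, inspecting $\Out(H)^*$ via \cite[Theorem~2.5.14]{GLS98} shows that this quotient is nontrivial only when $H$ is untwisted (so $d = 1$, as $\Gamma_H = 1$ for twisted $H$) and then has exponent dividing $6$; hence $p \in \{2,3\}$, $d = 1$, and, since $x^* \notin \Phi_H$, $x$ is not a field automorphism --- the opening assertions of~(2). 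The rest of~(2) --- the description of $C_H(x)$ ($O_p(C_H(x)) \neq 1$, or $O^{p'}(C_H(x)) \in \Lie(p)$ as in Table~\ref{tab:decreasingRanks}), the rank column, and the $\Inndiag(H)$-conjugacy (resp.\ $\Inndiag(H)\Gamma_H$-conjugacy when $H = D_4(q)$) of subgroups in the same row --- is then read off directly from \cite[Proposition~4.9.2]{GLS98}, the $D_4$ exceptions reflecting that $\Gamma_{D_4} \cong \Sym_3$ permutes the corresponding classes.

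For item~(1) I would invoke \cite[Proposition~4.9.1]{GLS98} and split on whether $d = p$ (equivalently $p \mid d$, since $d \leq 3$). If $d \neq p$: then $x$ is $\Inndiag(H)$-conjugate to a power of $\Frob|_H$, hence a field automorphism in our sense, and $O^{p'}(C_H(x)) \groupiso {}^d\Sigma(q^{1/p}) \in \Lie(p)$; the refinements on $Z(C_H(x))$ and $C_{\Inndiag(H)}(x)$ for $Z(H) = 1$ are part of the same statement. This is case~(a), and it preserves the Lie rank since $q^{1/p}$ and $q$ give the same root datum. If $d = p$ (so $p \in \{2,3\}$ and $H \in \{{}^2A_{n-1}, {}^2D_n, {}^2E_6\}$ or $H = {}^3D_4$), then \cite[Proposition~4.9.1]{GLS98} yields exactly the split into case~(b), $O_p(C_H(x)) \neq 1$, and case~(c), where $x$ is a field automorphism with $O^{p'}(C_H(x))$ as in Table~\ref{tab:twistedCases}. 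The three cases are mutually exclusive --- (a) presupposes $d \neq p$, while (b) and (c) are separated by $O_p(C_H(x)) \neq 1$ versus $= 1$ --- and exhaustive by \cite[Proposition~4.9.1]{GLS98}. An inspection of Table~\ref{tab:twistedCases} shows the Lie rank is preserved in case~(c) as well (${}^2A_{n-1}$ and $B_{[n/2]}$ have rank $[n/2]$; ${}^2D_n$ and $B_{n-1}$ have rank $n-1$; ${}^2E_6$ and $F_4$ have rank $4$; ${}^3D_4$ and $G_2$ have rank $2$). Finally, the cyclicity of $\Phi_H$ gives it a unique subgroup of order $p$, so any two order-$p$ field automorphisms are $\Inndiag(H)$-conjugate once their inner-diagonal parts are absorbed, again by \cite[Proposition~4.9.1]{GLS98}; this is the last clause of~(1).

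I expect the main difficulty to be the bookkeeping in the case $d = p$ of item~(1), and more generally the interplay of $\Phi_H$ and $\Gamma_H$ in the ``mixed'' characteristics: one must keep aside the very twisted Suzuki--Ree situations $B_2, F_4, G_2$ with $\gamma = \psi$ (where $\Phi_H$ has odd order $2s+1$ and so admits no order-$2$ field automorphism), and correspondingly notice that for untwisted $B_2(2^{2a+1})$, $F_4(2^{2a+1})$ the order-$2$ outer automorphism lands outside $\Phi_H$ and hence in item~(2), whereas for $B_2(2^{2a})$ it lies in $\Phi_H$ and falls under item~(1), so as to be certain that (a), (b), (c) genuinely partition the order-$p$ elements $x$ with $x^* \in \Phi_H$. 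A secondary, mechanical point is identifying our $O^{p'}(C_H(x))$ --- the subgroup generated by the Sylow $p$-subgroups of $C_H(x)$ --- inside the (often disconnected reductive) centralisers as recorded in \cite{GLS98}, and verifying line by line in Tables~\ref{tab:twistedCases} and~\ref{tab:decreasingRanks} that this ``Lie part'' is exactly the group listed.
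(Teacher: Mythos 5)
Your proposal is correct and follows the same route as the paper: the paper offers no separate proof, presenting the proposition explicitly as a quotation of the crucial parts of Propositions 4.9.1 and 4.9.2 of \cite{GLS98} (together with the structure of $\Aut(H)$ from Theorem 2.5.14 there), which is exactly the reduction you carry out. Your additional bookkeeping --- the dichotomy via normality of $\Phi_H$, the $d=p$ versus $d\neq p$ split, the rank checks, and the caveat about the Suzuki--Ree cases where $\Phi_H$ has odd order --- is a faithful elaboration of that translation.
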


We derive some useful properties from the previous proposition.

\begin{proposition}
\label{prop:FfFgFc}
Let $G$ be a finite group with an $\name_p$-subgroup $H$, and let $E\in \F_G(H)$.
\begin{enumerate}[label=(\arabic*)]
    \item $E\groupiso E^* \leq \Out(H)^*$ and it has $p$-rank at most $2$.
    \item If $H$ is untwisted, then $E\in \F_f$ if and only if $E$ is cyclic and $E^* \leq \Phi_H$.
    \item If $H$ is untwisted, then $E\in \F_g\cup \F_c$ if and only if $E$ is cyclic and $E^*\cap \Phi_H = 1$.
    Moreover, $\F_g$ and $\F_c$ are disjoint, and if $\F_c$ is non-empty then $\F_g$ is non-empty.
    \item If $H$ is twisted, then $\F_g = \emptyset$, $E\in \F_f\cup \F_c$, $E^*\leq \Phi_H$, and $\F_f$ and $\F_c$ are disjoint.
    \item If $E$ is cyclic then $E \in \F_f\cup \F_g\cup \F_c$, which is a disjoint union.
    \item If $E\in \F_c$ then $\Bp(K)^E \simeq *$ for any $H\leq K\leq G$.
    
    \item Let $\mathrm{\bf q}:G\to \Aut(H)$ be the map induced by conjugation on $H$, and define
    \[ G_d:=\mathrm{\bf q}^{-1}(\Inndiag(H)) \ \text{ and } \ G_{df} := \mathrm{\bf q}^{-1}(\Inndiag(H)\Phi_H).\]
    Then $\F_{G_{df}}(H) = \F_f$ if $H$ is untwisted, and $\F_{G_{df}}(H) = \F_f \cup \F_c$ otherwise.
    In particular, $\F_{G_{df}}(H)$ is an antichain of order-$p$ subgroups.
    

\end{enumerate}
\end{proposition}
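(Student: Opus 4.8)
The plan is to treat $\mathrm{\bf q}\colon G\to\Aut(H)$ (with kernel $C_G(H)=Z(H)$), the image $E^*\le\Out(H)^*$, and Proposition~\ref{prop:GLSprops} as the common engine. For $E\in\F_G(H)$ the hypotheses $E\cap H=1$ and $E$ a nontrivial $p$-group immediately give that $\mathrm{\bf q}|_E$ is injective and $\mathrm{\bf q}(E)\cap\Inndiag(H)=1$ (since $|\Inndiag(H)|$ is prime to $p$), hence $E\groupiso E^*\le\Out(H)^*$; and from the structure of $\Out(H)^*$ --- generated by the cyclic group $\Phi_H$ together with $\Gamma_H$, which is cyclic of order $\le 3$ or $\cong\Sym_3$ for $D_4$, cf.\ \cite[Theorem 2.5.14]{GLS98} --- one reads off $\mathrm{rk}_p(\Out(H)^*)\le 2$, which is (1). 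I would also invoke the standard fact that $O_p(L)=1$ for every $L\in\Lie(p)$; combined with $O_p(N)\trianglelefteq O^{p'}(N)$, this forces $O_p(C_H(E))=1$ whenever $O^{p'}(C_H(E))\in\Lie(p)$, and by Proposition~\ref{prop:GLSprops}(1) (cases (a),(c), the only ones open to a field automorphism) this holds for every $E\in\F_f$. Hence $\F_f$ is disjoint from $\F_c$, it is disjoint from $\F_g$ by definition, and $\F_g\cap\F_c=\emptyset$ trivially; together with the observation that a cyclic $E\in\F_G(H)$ has $|E|=p$ and then lies in $\F_f$, $\F_g$ or $\F_c$ according to whether it induces field automorphisms, resp.\ has $O_p(C_H(E))=1$ or $\ne1$, this proves (5).

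For (2), the forward implication is the definitional fact that a field automorphism has outer image in $\Phi_H$; for the converse I would apply Proposition~\ref{prop:GLSprops}(1) to a generator $x$ of $E$ (which satisfies $x\notin\Inndiag(H)$, $x^*\in\Phi_H$, $|x|=p$) and use that $H$ untwisted forces $d=1\ne p$, so only case (a) is available and $x$ is a field automorphism. Item (3) then rests on (2) and Proposition~\ref{prop:GLSprops}(2): a cyclic $E$ lies in $\F_g\cup\F_c$ precisely when $|E|=p$ and $E^*\cap\Phi_H=1$ --- if $E^*\cap\Phi_H\ne1$ then $E^*\le\Phi_H$ and $E\in\F_f$ by (2), contradicting the disjointness above; if $E^*\cap\Phi_H=1$ the generator induces an automorphism $x$ with $x^*\in\Out(H)^*\setminus\Phi_H$, and Proposition~\ref{prop:GLSprops}(2) gives either $O_p(C_H(E))\ne1$ (so $E\in\F_c$) or $O^{p'}(C_H(E))\in\Lie(p)$ (so $O_p(C_H(E))=1$ and $E\in\F_g$); disjointness of $\F_g,\F_c$ is by definition. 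The one delicate clause is the last assertion of (3): given $E=\gen x\in\F_c$, the element $x$ induces a graph or graph-field automorphism catalogued in Table~\ref{tab:decreasingRanks}, and inside $HE\le G$ --- whose image in $\Aut(H)$ is $\Inn(H)\gen{\mathrm{\bf q}(x)}$ --- I must produce an order-$p$ element whose induced automorphism is a \emph{standard} one of the same type (an entry of Table~\ref{tab:decreasingRanks}, which has trivial $p$-core centraliser), since that element generates a subgroup in $\F_g$. This reduces to the statement that every $\Inn(H)$-coset inside $\Out(H)^*\setminus\Phi_H$ carrying an order-$p$ element carries a standard one, and I expect this to be the main obstacle, to be extracted from the Aschbacher--Seitz/GLS analysis of order-$p$ outer automorphisms underlying Proposition~\ref{prop:GLSprops}(2), with the $D_4$ cases ($\Gamma_H\cong\Sym_3$) treated separately.

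Item (4): if $H$ is twisted then $\Gamma_H=1$, so $\Out(H)^*\le\Phi_H$ is cyclic, whence every $E\in\F_G(H)$ is cyclic with $E^*\le\Phi_H$, so $|E|=p$ and, by (5), $E\in\F_f\cup\F_g\cup\F_c$; moreover if $O_p(C_H(E))=1$ then by Proposition~\ref{prop:GLSprops}(1) (cases (a) or (c), case (b) being excluded) the generator is a field automorphism, i.e.\ $E\in\F_f$, so $\F_g=\emptyset$, $E\in\F_f\cup\F_c$, and $\F_f\cap\F_c=\emptyset$ as before. Item (6) follows from Remark~\ref{rk:fixedPointsByPgroup}, which gives $\Bp(K)^E\simeq\Bp(C_K(E))$, together with the observation that $C_H(E)=C_K(E)\cap H$ is normal in $C_K(E)$ (since $H\trianglelefteq G$), so the nontrivial characteristic subgroup $O_p(C_H(E))$ is normal in $C_K(E)$, whence $O_p(C_K(E))\ne1$ and $\Bp(C_K(E))$ is contractible by Proposition~\ref{prop:contractibleAp}. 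Finally for (7): $H\le G_{df}$ because $\Inn(H)\le\Inndiag(H)\Phi_H$, and for $E\le G_{df}$ with $E\cap H=1$ one has $E^*\le(\Inndiag(H)\Phi_H)/\Inndiag(H)=\Phi_H$, so $E$ is cyclic of order $p$; for untwisted $H$ this says $E\in\F_f$ by (2), while conversely $E\in\F_f$ has $E^*\le\Phi_H$, hence $\mathrm{\bf q}(E)\le\Inndiag(H)\Phi_H$ and $E\le G_{df}$, so $\F_{G_{df}}(H)=\F_f$; for twisted $H$ the condition $E^*\le\Phi_H$ is automatic and $\F_g=\emptyset$ by (4), so the same argument gives $\F_{G_{df}}(H)=\F_f\cup\F_c$. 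In all cases the members have order $p$, so the set is trivially an antichain.
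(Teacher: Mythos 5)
Your plan follows the paper's proof quite closely: reduce everything to Proposition~\ref{prop:GLSprops} via the embedding of $E$ into $\Out(H)^*$, and dispatch the individual items by case analysis on $d$ and on $O_p(C_H(E))$. Items (2), (4), (5), (6), (7) and the bulk of (1) and (3) are handled correctly and in essentially the same way as the paper. Two points, however, need attention.

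In item (1) you justify $\mathrm{\bf q}(E)\cap\Inndiag(H)=1$ ``since $|\Inndiag(H)|$ is prime to $p$.'' That premise is false: $\Inndiag(H)$ contains $\Inn(H)\groupiso H/Z(H)$, whose order is divisible by $p$ for any $H\in\Lie(p)$. What is true, and what the paper's proof actually uses, is that $\Outdiag(H)=\Inndiag(H)/\Inn(H)$ has order prime to $p$. The correct two-step argument is: $E\cap H=1$ forces $\mathrm{\bf q}(E)\cap\Inn(H)=1$; then any nontrivial element of $\mathrm{\bf q}(E)\cap\Inndiag(H)$ would map to a nontrivial $p$-element of $\Outdiag(H)$, a contradiction. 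Your conclusion is correct but your stated reason is not, and the fix is not cosmetic since the missing ingredient ($\mathrm{\bf q}(E)\cap\Inn(H)=1$) is exactly where the hypothesis $E\cap H=1$ enters.

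The more substantial issue is the last clause of item (3), namely that for untwisted $H$, $\F_c\neq\emptyset$ implies $\F_g\neq\emptyset$. You correctly flag this as the delicate step, but you only reduce it to a statement that you ``expect \ldots to be extracted from the Aschbacher--Seitz/GLS analysis'' --- you do not prove it, and your reduction is also phrased imprecisely (there are no ``$\Inn(H)$-cosets inside $\Out(H)^*$''; you presumably mean cosets in $\Aut(H)$ whose image in $\Out(H)^*$ avoids $\Phi_H$). This is therefore a genuine gap. The paper closes it by citing Proposition 4.9.2(g) of \cite{GLS98} together with the remark that follows it: for $\gen{x}\in\F_c$ there exist $\varphi\in\Aut(H)$, a long-root element $z\in Z\leq H$, and a standard order-$p$ graph (or graph-field) automorphism $y$ as in Table~\ref{tab:decreasingRanks} with $\gen{\varphi(yz)}=\gen{x}$. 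Since $\varphi(z)\in H$, one has $\varphi(y)\in H\gen{x}\leq G$, and $\gen{\varphi(y)}\in\F_g$ because $\varphi$ preserves the properties of $y$ (order $p$, not a field automorphism, trivial $p$-core of the centraliser). Your write-up would need this explicit reference and the final ``$\varphi(y)\in H\gen{x}$'' computation to be complete.
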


\begin{proof}
If $E\in \F_G(H)$ then $E$ is faithful on $H$ and $E\cap \Inndiag(H) = 1$ since $\Outdiag(H)$ and $Z(H)$ have order prime to $p$.
Thus $E\groupiso E^*\leq \Out(H)^*$.

Items (2-5) follow directly from Proposition \ref{prop:GLSprops}.
We briefly explain that $\F_g$ is non-empty if $\F_c$ is non-empty and $H$ is untwisted.
Take $Z$ to be a long-root subgroup of $H$, $y\in \Aut(H)$ an order-$p$ element inducing ``graph" automorphisms on $H$ (according to the rows of Table \ref{tab:decreasingRanks}), and suppose $\gen{x}\in \F_c$.
Then there exists $\varphi\in \Aut(H)$ such that $\gen{\varphi(yz)} = \varphi(\gen{yz}) = \gen{x}$ for some $z\in Z$, by Proposition 4.9.2(g) of \cite{GLS98} and the remark below that proposition.
Since $\varphi(z)\in H$, we see that $\varphi(y)\in H\gen{x}\leq G$, so $\gen{\varphi(y)} \in \F_g$.

For item (6) recall first that $\Bp(K)^E\simeq \Bp(C_K(E))$ by Remark \ref{rk:fixedPointsByPgroup}.
Also since $H\leq K$ is normal, $C_{H}(E)$ is normal in $C_K(E)$, so $1\neq O_p(C_{H}(E)) \leq O_p(C_K(E))$.
This implies $\Bp(C_K(E))\simeq *$ by Proposition \ref{prop:contractibleAp}.

On the other hand, $\F_f\subseteq \F_{G_{df}}(H)$.
Reciprocally, if $E\in \F_{G_{df}}(H)$ then $E^* \leq \Phi_H$ and the latter is a cyclic group, so $E$ has order $p$.
By items (2) and (4), $E\in \F_f$ if $H$ is untwisted, and $E\in \F_f\cup \F_c$ otherwise.
This establishes (7).
\end{proof}

Now we have the necessary preliminaries to prove the first part of our main theorem.

\begin{theorem}
[{Field case}]
\label{thm:fieldCase}
Let $p$ be a prime and $G$ a finite group with an $\name_p$-subgroup $H$ of Lie rank $n$ such that $\F_g = \emptyset$.
Then
 \[ \Ap(G) \simeq_{G} (\Delta\Bp(H))_{\F_f} \cong_{G} \Delta(H)_{\F_f}.\]
Moreover, $\Delta(H)_{\F_f}$ is spherical of dimension $n$ if $\F_f\neq \emptyset$, and $n-1$ otherwise.
In any case,
\begin{align*}
    \widetilde{H}_{n}( \Ap(G),\ZZ ) & \groupiso_G \Ker\left(\, \bigoplus_{\overline{E}\in \F_f/G} \Ind_{N_G(E)}^G\big(\widetilde{H}_{n-1}( \Delta(H)^E ,\ZZ )\big) \overset{i'}{\longrightarrow} \widetilde{H}_{n-1}( \Delta(H),\ZZ) \,\right)\\
    & \groupiso_G \Ker\left(\, \bigoplus_{\overline{E}\in \F_f/G} \Ind_{N_G(E)}^G\big(\widetilde{H}_{n-1}( \Ap(C_H(E)) ,\ZZ )\big) \overset{i}{\longrightarrow} \widetilde{H}_{n-1}( \Ap(H),\ZZ) \,\right),
\end{align*}
where the maps $i',i$ are induced from the corresponding inclusions.
\end{theorem}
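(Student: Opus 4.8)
The plan is to peel $G$ down to its normal subgroup $G_{df}$, recognise $\Ap(G_{df})$ as an $\F_f$‑extension of the Tits building of $H$, and then apply the Solomon--Tits theorem. Throughout I would use that $G_{df}\trianglelefteq G$ (Proposition~\ref{prop:FfFgFc}(7)) and that $H\trianglelefteq G_{df}$ is self-centralising in $G_{df}$.

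First I would set up Proposition~\ref{prop:extendingByNormalSubgroups} for the chain $H\le G_{df}\le G$. Its hypothesis is that $\F_{G_{df}}(H)\cup\F_G(G_{df})$ be an antichain: by Proposition~\ref{prop:FfFgFc}(7) the first family is an antichain of order‑$p$ subgroups, while $\F_G(G_{df})=\emptyset$ if $H$ is twisted (then $G_{df}=G$), and if $H$ is untwisted every $E\in\F_G(G_{df})$ embeds into $G/G_{df}$, a section of $\Gamma_H$, which has $p$‑rank at most $1$, so $|E|=p$; the two families are disjoint with no containments between them since one sits in $G_{df}$ while nontrivial elements of the other meet $G_{df}$ trivially. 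Proposition~\ref{prop:extendingByNormalSubgroups} then gives
\[ \Ap(G)\;\simeq_G\;\big((\Delta\Bp(H))_{\F_{G_{df}}(H)'}\big)_{\F_G(G_{df})'}\;\simeq_G\;(\Delta\Bp(G_{df}))_{\F_G(G_{df})'}. \]
Now I would identify both decorating families. Since $\F_c=\{E: |E|=p,\ O_p(C_H(E))\neq1\}$ and $\F_f,\F_c$ are disjoint, every order‑$p$ member of $\F_{G_{df}}(H)$ not in $\F_c$ has $O_p(C_H(E))=1$; with Proposition~\ref{prop:FfFgFc}(7) this yields $\F_{G_{df}}(H)'=\F_f$. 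The hypothesis $\F_g=\emptyset$ is exactly what forces $\F_G(G_{df})'=\emptyset$: any $E\in\F_G(G_{df})$ is cyclic with $E^*\cap\Phi_H=1$ (because $E\cap G_{df}=1$), so $E\in\F_g\cup\F_c=\F_c$ by Proposition~\ref{prop:FfFgFc}(3), whence $1\neq O_p(C_H(E))\trianglelefteq C_{G_{df}}(E)$ and $E\notin\F_G(G_{df})'$. Therefore $\Ap(G)\simeq_G(\Delta\Bp(H))_{\F_f}$, and the $\Aut(H)$‑equivariant isomorphism $\Delta\Bp(H)\cong\Delta(H)$ of Remark~\ref{rk:BpGandBuilding} (together with $G$‑invariance of $\F_f$) upgrades this to $\Ap(G)\simeq_G(\Delta\Bp(H))_{\F_f}\cong_G\Delta(H)_{\F_f}$.

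For sphericity: if $\F_f=\emptyset$ then $\Delta(H)_{\F_f}=\Delta(H)$ is a building of rank $n$, hence spherical of dimension $n-1$ by Theorem~\ref{thm:solomonTits}. If $\F_f\neq\emptyset$ I would apply Corollary~\ref{coro:spherical} (equivalently Corollary~\ref{coro:sphericalExtendedBuilding}) with $L=\Delta(H)$, $V=\F_f$: each link $\Lk_L(E)=\Delta(H)^E$ is a building of rank $n$ by Lemma~\ref{lm:fieldAutomorphismsAndFixedPointsBuilding}, hence spherical of dimension $n-1$. Fix $E_0\in\F_f$; by Lemma~\ref{lm:fieldAutomorphismsAndFixedPointsBuilding} the subcomplex $\Delta(H)^{E_0}$ contains an apartment $A_0$ of $\Delta(H)$ (spherical buildings carry a unique, hence $\Aut(H)$‑stable, apartment system, so a subcomplex $\Aut(H)$‑conjugate to a standard apartment is an apartment), and I pick a chamber $C_0\in A_0$. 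Since $H$ acts strongly transitively on its building, every apartment of $\Delta(H)$ through $C_0$ equals ${}^{h}A_0$ for some $h\in\Stab_H(C_0)$, hence lies in ${}^{h}(\Delta(H)^{E_0})=\Delta(H)^{{}^{h}E_0}$ with ${}^{h}E_0\in\F_f$ (as $h\in H\le G$ and $\F_f$ is $G$‑invariant); by Theorem~\ref{thm:solomonTits} these apartments generate $\pi_{n-1}(\Delta(H),C_0)$, so hypotheses (1)--(4) of Corollary~\ref{coro:spherical} are met and $\Delta(H)_{\F_f}$ is spherical of dimension $n$. I expect the crux of the whole proof to be precisely this covering statement — that the field‑automorphism fixed subcomplexes $\Delta(H)^E$, $E\in\F_f$, jointly contain all apartments through a single chamber — which is what couples Lemma~\ref{lm:fieldAutomorphismsAndFixedPointsBuilding}, strong transitivity of $H$, and $G$‑invariance of $\F_f$.

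Finally, for the homology formula I would feed $L=\Delta(H)\le K=\Delta(H)_{\F_f}$, $V=\F_f$ into the long exact sequence of Theorem~\ref{thm:MVsequence}(1). As $\Delta(H)$ and all $\Delta(H)^E$ are spherical of dimension $n-1$, reduced homology is concentrated in degree $n-1$ and the sequence collapses to
\[ 0\to\widetilde{H}_n(K,\ZZ)\to\bigoplus_{E\in\F_f}\widetilde{H}_{n-1}(\Delta(H)^E,\ZZ)\overset{j}{\longrightarrow}\widetilde{H}_{n-1}(\Delta(H),\ZZ)\to\widetilde{H}_{n-1}(K,\ZZ)\to0 \]
with $\widetilde{H}_m(K,\ZZ)=0$ for $m\notin\{n-1,n\}$. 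By the sphericity just established $\widetilde{H}_{n-1}(K,\ZZ)=0$ (trivially also when $\F_f=\emptyset$), so $j$ is onto and $\widetilde{H}_n(\Ap(G),\ZZ)\cong_G\widetilde{H}_n(K,\ZZ)=\Ker(j)$. Regrouping the direct sum over $G$‑orbits and using $\Stab_G(E)=N_G(E)$ identifies $j$ with the $G$‑equivariant map $i'$ of the statement, and the second kernel description follows from the $N_G(E)$‑equivalences $\Delta(H)^E\simeq\Bp(C_H(E))\simeq\Ap(C_H(E))$ (Remark~\ref{rk:fixedPointsByPgroup}, Proposition~\ref{prop:ApSpBpGHomotopyEquivalences}) and $\Delta(H)=\Delta\Bp(H)\simeq\Ap(H)$ (Remark~\ref{rk:BpGandBuilding}), which are compatible with the displayed inclusions.
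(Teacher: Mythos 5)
Your proof is correct and follows the same overall strategy as the paper: reduce $\Ap(G)$ to the $\F_f$-extension of the Tits building $\Delta(H)$, verify sphericity by covering the apartments through a fixed chamber with the fixed-point subcomplexes $\Delta(H)^E$ ($E\in\F_f$), and extract the homology formula from the Mayer--Vietoris long exact sequence of Theorem~\ref{thm:MVsequence}(1). You correctly identify the covering-by-apartments statement as the heart of the argument.

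The one place where your route diverges is the first reduction. You pass through the chain $H\le G_{df}\le G$ and invoke Proposition~\ref{prop:extendingByNormalSubgroups} once, then eliminate $\F_G(G_{df})'=\emptyset$ using $\F_g=\emptyset$. The paper instead applies Proposition~\ref{prop:extendingByNormalSubgroups} directly with $K=G$, noting via Proposition~\ref{prop:FfFgFc} that, under $\F_g=\emptyset$, the family $\F_G(H)$ is automatically an antichain of order-$p$ subgroups with $\F_G(H)'=\F_f$ (so the third decorating family is empty from the start). Your detour through $G_{df}$ is perfectly valid — you verify the required antichain hypothesis and the rank argument for $\F_G(G_{df})$ is sound — but it does the same work twice; $G_{df}$ is genuinely needed only later in Theorem~\ref{thm:main} when $\F_f$ and $\F_g$ are both non-empty. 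A smaller streamlining on your side: you replace the paper's ``act by an element of $\Aut(H)$ to move to a convenient apartment system'' by appealing to uniqueness of the apartment system of a thick spherical building; this is fine because $\F_f\neq\emptyset$ forces $q\geq p^p\geq 4$, so $\Delta(H)$ is thick in the relevant cases. On the homology identification, your ``compatibility of the displayed inclusions'' is correct but elided; the paper also treats it briefly, reapplying Theorem~\ref{thm:MVsequence}(1) to $K=\Delta\Ap(G)$ with the full subcomplex on $\Ap(G)\setminus\F_f$ as $L$, which is the cleaner way to see that the two kernels literally coincide as $G$-modules.
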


\begin{proof}
By Remark \ref{rk:BpGandBuilding}, the isomorphism of $\Bp(H)^{\op}$ with the poset of parabolic subgroups of $H$ is also a $G$-isomorphism since $H$ is characteristic in $G$ by uniqueness Lemma \ref{lm:uniqueLiep}.
Then we have a $G$-isomorphism $\Delta \Bp(H) \cong_{G} \Delta(H)$, which naturally extends to a $G$-isomorphism
\[ \Delta \Bp(H)_{\F_f} \cong_{G} \Delta(H)_{\F_f}.\]

On the other hand, by Proposition \ref{prop:FfFgFc} we conclude that $\F_G(H)' = \F_f$, and thus by Proposition \ref{prop:extendingByNormalSubgroups} applied to $H$ and $K=G$, we get a $G$-homotopy equivalence
\[ \Ap(G) \simeq_G (\Delta\Bp(H))_{\F_f}.\]

Next, we prove that this complex is spherical, and since $\Delta \Bp(H) \cong \Delta(H)$ is already spherical by the Solomon-Tits Theorem \ref{thm:solomonTits}, we can assume that we are in the case that $\F_f$ is non-empty.
For $E\in \F_{G}(H)$, we have that $\Lk_{\Delta(H)}(E) = \Delta(H)^E$, which is spherical of dimension $n-1$ and contains an apartment $A$ such that for a suitable $y\in \Aut(H)$, $A' = \{\sigma\cdot y \tq \sigma \in A\}$ is an apartment of the standard apartment system $\A'$ of $\Delta(H)$ by Lemma \ref{lm:fieldAutomorphismsAndFixedPointsBuilding} (here $\sigma\cdot y$ denotes the simplicial action on the right of $\Aut(H)$ on $\Delta(H)$).
After acting by an element of $\Aut(H)$, we get an apartment system $\A$ of $\Delta(H)$ that contains an apartment fixed by some $E\in \F_f$.
Now recall that $H$ is transitive on $\{ (C,A) \tq A\in \A, \text{ $C$ is a chamber in }A\}$ (see Theorem 6.56 of \cite{AB}).
In particular, if we take a chamber $C$ lying in an apartment $A_0\in \A$ fixed by some $E_0\in \F_f$, then $\Stab_H(C)$ is transitive on the set of apartments of $\A$ containing $C$, so each apartment of $\A$ containing $C$ is contained in $\Lk_{\Delta(H)}(E)$ for some $\Stab_H(C)$-conjugate $E\in \F_f$ of $E_0$.
By Corollary \ref{coro:sphericalExtendedBuilding} we conclude then that $\Delta(H)_{\F_f}$ is spherical of dimension $n$.

Finally, the isomorphism in homology in terms of $\Delta(H)^E$ and $\Delta(H)$ follows from Theorem \ref{thm:MVsequence}(1) applied to $K = \Delta(H)_{\F_f}$ and $L = \Delta(H)$.
For the case of $\Ap(C_H(E))$ and $\Ap(H)$, it also follows from Theorem \ref{thm:MVsequence}(1) applied to $K = \Delta \Ap(G)$ and $L = \Delta \N_G(H) \simeq_G \Ap(H)$, where $\N_G(H)  = \Ap(G)\setminus \Ap(H)$, so $K\setminus L = \F_f$.
\end{proof}


Let us view some examples and determine the number of spheres that appear in such a wedge decomposition.

\begin{example}
\label{ex:PSL2}
Let $p = 2$, $q$ a power of $2$, $H = \PSL_2(q^2)$ and $G = \Aut(H) = H:\Phi_H$.
Then $\F_f/G$ consists of a unique orbit represented by the unique order-$2$ subgroup of $\Phi_H$, which we can denote by $\Phi$.
Also $N_G(\Phi) = C_H(\Phi)\Phi_H = \PSL_2(q)\Phi_H$.
Since $\widetilde{\chi}(\A_2(H)) = |H|_2 = q^2$ and $\widetilde{\chi}(\A_2(C_H(\Phi))) = |C_H(\Phi)|_2=q$,
by Theorem \ref{thm:fieldCase} we see that $\A_2(G)$ is homotopy equivalent to a wedge of $1$-spheres and
\[ \dim \widetilde{H}_1(\A_2(G),\QQ) = \frac{|G|}{|N_G(\Phi)|}q - q^2 = \frac{q^2(q^4-1)}{q(q^2-1)}q - q^2 = q^4.\]
\end{example}

\begin{example}
\label{ex:2DnChar2}
Suppose $p=2$, $q$ is a power of $2$, $H = \POmega^-_{2n}(q)$ $(\, ={}^2D_n(q))$, and $G = \Omega_1(\Aut(H))$.
Then $G = H:\Phi$, where $\Phi$ is the unique subgroup of order $2$ of $\Phi_{H}$.
Recall that $H$ is obtained from a symmetry of order $2$, and thus $\Phi_H$ has order $2s$, where $q=p^s$.
Moreover, since $\Inndiag(H) = H$, we see that elements of $\F_f$ are all $G$-conjugate to $\Phi$ and
\[ N_G(\Phi) = C_G(\Phi) = \PSp_{2n-2}(q) \times \Phi\]
by Proposition \ref{prop:GLSprops}(1)(c).
Here we have used that $B_n(2^a) \groupiso C_n(2^a)$ (see \cite[Theorem 37]{Steinberg}).
Hence, by Theorem \ref{thm:fieldCase}, $\A_2(G)$ is homotopy equivalent to a wedge of $(n-1)$-spheres, and the number of such spheres is
\begin{equation}
\label{eq:dimensionField2DnChar2}
    \dim \widetilde{H}_{n-1}(\A_2(G),\QQ) = \frac{|G|}{|N_G(\Phi)|}\cdot q^{(n-1)^2} -q^{n(n-1)} = q^{n(n-1)}((q^n+1) - 1) = q^{n^2}.
\end{equation}
\end{example}

Furthermore, our previous theorem can be used to show that $\Bp(\Omega_1(G))$ is spherical of dimension $n$ if $\F_f\neq\emptyset$:

\begin{corollary}
\label{coro:sphericalBp}
Let $p$ be a prime and $G$ a finite group with an $\name_p$-subgroup of Lie rank $n$ such that $\F_g = \emptyset$ and $\F_f\neq\emptyset$.
Then $\Bp(\Omega_1(G))$ is spherical of dimension $n$.
\end{corollary}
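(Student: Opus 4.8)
The plan is to deduce the statement from Theorem~\ref{thm:fieldCase} and the standard equivalences among $p$-subgroup posets, and then to pin down the exact dimension of the order complex $\Delta\Bp(\Omega_1(G))$ by analysing its chains directly. For the homotopy type: since $\Omega_1(G)$ contains every order-$p$ element of $G$ we have $\Ap(G)=\Ap(\Omega_1(G))$, and by Proposition~\ref{prop:ApSpBpGHomotopyEquivalences} the inclusion $\Bp(\Omega_1(G))\hookrightarrow\Sp(\Omega_1(G))$ is a homotopy equivalence, so $\Bp(\Omega_1(G))\simeq\Ap(G)$. As $\F_g=\emptyset$ and $\F_f\neq\emptyset$, Theorem~\ref{thm:fieldCase} gives $\Ap(G)\simeq\Delta(H)_{\F_f}$, which is homotopy equivalent to a wedge of $n$-spheres. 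Hence $\Bp(\Omega_1(G))$ is already homotopy equivalent to such a wedge, and it remains only to show that the order complex $\Delta\Bp(\Omega_1(G))$ has dimension exactly $n$.

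For the bound $\dim\Delta\Bp(\Omega_1(G))\le n$, write $L=\Omega_1(G)$. First I would observe that $|L/H|_p=p$: the hypothesis $\F_g=\emptyset$, combined with Propositions~\ref{prop:GLSprops} and~\ref{prop:FfFgFc}, forces every order-$p$ element of $G$ either to be inner on $H$ or to have image in $\Phi_H$ inside $\Out(H)^*$; since $\Phi_H$ is cyclic and $\F_f\neq\emptyset$, the image of $L$ in $\Out(H)^*$ is the unique subgroup of order $p$, and because $\Outdiag(H)$ has order prime to $p$ this yields $|L/H|_p=p$. Next, using the standard fact that $O_p(M)\le R$ for every radical $p$-subgroup $R$ of any finite group $M$, one checks that for $R\in\Bp(L)$ one has $N_L(R)\le N_L(R\cap H)$, so $R$ is radical in $N_L(R\cap H)$, hence $O_p(N_L(R\cap H))\le R$, and therefore $O_p(N_H(R\cap H))=R\cap H$; that is, $R\cap H$ is either trivial or a radical $p$-subgroup of $H$. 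Now given a chain $R_0<\dots<R_k$ in $\Bp(L)$, put $Q_i=R_i\cap H$: the distinct non-trivial $Q_i$ form a chain in $\Bp(H)$, which has length at most $n$ since $\Delta\Bp(H)\cong\Delta(H)$ has dimension $n-1$, while the groups $R_iH/H$ run through the subgroups of a cyclic group of order $p$. Since at each step either $Q$ or $R_iH/H$ must grow strictly, a short bookkeeping over these two cases (noting that $R_iH/H$ can grow only once, and that $R_0\cap H=1$ forces $R_0H/H$ to be the whole Sylow already) gives $k\le n$.

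For the reverse bound I would exhibit an explicit chain of length $n+1$. Choose $x$ of order $p$ with $\gen{x}\in\F_f$. By Lemma~\ref{lm:fieldAutomorphismsAndFixedPointsBuilding} the complex $\Delta(H)^x$ contains an apartment, hence an $x$-fixed chamber of $\Delta(H)$; translating through Remark~\ref{rk:BpGandBuilding}, this yields a maximal chain $V_0\supsetneq V_1\supsetneq\dots\supsetneq V_{n-1}$ of radical $p$-subgroups of $H$ (the unipotent radicals of an $x$-stable maximal flag of proper parabolics), each normalised by $x$. Then
\[ V_0\gen{x}\supsetneq V_1\gen{x}\supsetneq\dots\supsetneq V_{n-1}\gen{x}\supsetneq V_{n-1} \]
is a chain of $n+1$ $p$-subgroups of $L$, and each term is radical in $L$: this is verified by a normaliser computation that uses $|L/H|_p=p$, the Borel--Tits description of the radical $p$-subgroups of $H$, and the fact that the centraliser in a Levi factor of a non-trivial field automorphism is again reductive with trivial $p$-core, so that no normal $p$-subgroup of the relevant normaliser can be enlarged. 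Thus $\dim\Delta\Bp(L)\ge n$, and combined with the previous paragraph $\dim\Delta\Bp(L)=n$, completing the proof.

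The main obstacle is precisely this dimension computation: the homotopy equivalence $\Bp(\Omega_1(G))\simeq\Ap(G)$ only controls the homotopy type, so one genuinely has to understand how the order-$p$ field automorphisms enlarge the poset $\Bp(H)$ — the content being that, under $\F_g=\emptyset$ and $\F_f\neq\emptyset$, they contribute exactly one extra level, raising the dimension from $n-1$ to $n$ and no further.
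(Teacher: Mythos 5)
Your reduction and upper-bound argument are essentially the paper's. After noting $\Ap(G)=\Ap(\Omega_1(G))$ you need that $H\leq\Omega_1(G)$ (so that $|L/H|$ makes sense); the paper makes this explicit by observing that $\F_f\neq\emptyset$ forces $H$ to be quasisimple, hence $\Omega_1(H)=H$ and $H$ remains an $\name_p$-subgroup of $\Omega_1(G)$. Your bookkeeping with $Q_i=R_i\cap H$ and $R_iH/H$ is correct and tracks the same dichotomy as the paper's observation that if $R<T$ in $\Bp(L)$ with $R\cap H=T\cap H$ then $R\leq H$ with index $p$ in $T$, giving $k\leq n$.

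Where you diverge is the lower bound $\dim\Delta\Bp(L)\geq n$, and this is where your proposal has a real gap. You propose to exhibit the explicit chain $V_0\gen{x}\supsetneq\cdots\supsetneq V_{n-1}\gen{x}\supsetneq V_{n-1}$ inside $\Bp(L)$, but the claim that each term is radical in $L$ is precisely the content you would need to prove, and the sketched ``normaliser computation'' is not routine. You need $O_p\bigl(N_L(V_i\gen{x})\bigr)=V_i\gen{x}$ and $O_p\bigl(N_L(V_{n-1})\bigr)=V_{n-1}$; for the latter this reduces to showing $O_p(L_{n-1}\gen{\bar x})=1$ for the Levi quotient $L_{n-1}$, which requires arguing that no $L_{n-1}$-conjugate of $\bar x$ in $L_{n-1}\gen{\bar x}$ can be central, i.e.\ that a non-trivial field automorphism of the Levi (and its semisimple part) is never induced by an inner element. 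This is believable but case-dependent, and the situation for $V_i\gen{x}$ with $0<i<n-1$ (where the subgroup need not be a unipotent radical nor of the form $O_p$ of a parabolic) is even less obvious. The paper avoids all of this: it invokes the non-vanishing $\widetilde{\chi}(\Ap(G))\neq 0$ (via Corollary~\ref{coro:eulerFieldCase}, resting on \cite[Theorem 3]{AK} or \cite[Theorem 6.6]{PS22}), which forces $\widetilde H_n(\Bp(L),\QQ)\neq 0$ because the homotopy type is a wedge of $n$-spheres, and hence $\dim\Bp(L)\geq n$. That one line replaces the entire explicit-chain construction. Unless you actually carry out the radicality verification for all terms, the clean route is to cite the Euler characteristic argument as the paper does.
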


\begin{proof}
Let $H$ be the $\name_p$-subgroup of $G$.
Since $\F_f\neq\emptyset$, $H$ is not one of the groups in Table \ref{tab:excludedCases}, and therefore $H$ is quasisimple with $\Omega_1(H) = H$. In particular, $H$ is an $\name_p$-subgroup of $\Omega_1(G)$ and $\F_G(H) = \F_{\Omega_1(G)}(H)$, so we may assume that $G = \Omega_1(G)$.
We claim that $|G|_p = p\cdot |H|_p$.
From $\F_g = \emptyset$, we also get $\F_c = \emptyset$ if $H$ is untwisted by Proposition \ref{prop:FfFgFc}(3), and therefore $G^* \leq \Omega_1(\Phi_H)$, which is a cyclic group of order $p$.
In any case, $G^* = \Omega_1(\Phi_H)$ by the assumption $\F_f\neq\emptyset$.
This proves our claim.

Now we invoke a crucial property on radical $p$-subgroups: If $N$ is a normal subgroup of $K$, and $R\in \Bp(K)$, then $N\cap R\in \Bp(N)\cup \{1\}$.
In our situation, if $R\in \Bp(G)$ then $R\cap H=1$ implies that $R$ is cyclic of order $p$ and hence $R\in \F_f$.
Moreover, if $R < T$ lie in $\Bp(G)$ and $R\cap H = T\cap H$ then
we claim that $R\leq H$ and $R$ has index $p$ in $T$.
Indeed, $|R:R\cap H| \leq p$, and $|T:T\cap H|\leq p$.
If it happens that $R \neq R\cap H$ then both indices are $p$, and hence $|R| = p |R\cap H| = p |T\cap H| = |T|$, i.e., $R = T$.
Thus $R\leq H$, and again since  $T > R = R\cap H = T\cap H$, we see that $R$ has index $p$ in $T$.

These computations show that $\Bp(G)$ has dimension at most one plus the dimension of $\Bp(H)$.
Since $\Bp(G)$ is homotopy equivalent to a non-trivial wedge of $n$-spheres by Theorem \ref{thm:fieldCase} (see also Corollary \ref{coro:eulerFieldCase}), we see that $\Bp(G)$ has dimension exactly $n$.
Hence $\Bp(G)$ is spherical.
\end{proof}

In general, $\Bp(G)$ might not have dimension $n$:

\begin{example}
Let $p=2$, $H = \PSL_2(16)$ and $G = \Aut(H) = H:\Phi_H$, where $\Phi_H$ is cyclic of order $4$.
Then $\B_2(G)$ is homotopy equivalent to a wedge of $256$ $1$-spheres by Example \ref{ex:PSL2}, but $\B_2(G)$ has dimension $2$: 
We have a chain of radical $2$-subgroups given by $\Omega_1(\Phi_H) < \Phi_H < S\Phi_H$, where $S$ is a Sylow $2$-subgroup of $H$ normalised by $\Phi_H$.
\end{example}

Theorem \ref{thm:fieldCase} describes the homotopy type of $\Ap(G)$ when $G$ contains at most field automorphisms of order $p$, that is, $\F_g = \emptyset$, which is indeed the case for most groups in $ \Lie(p)$.
The following theorem determines the homotopy type of $\Ap(G)$ in the case
$\F_f = \emptyset$, which includes then $\F_g\neq\emptyset$ (and hence $p=2,3$).

\begin{theorem}
[{No field case}]
\label{thm:noFieldCase}
Let $p$ be a prime and $G$ a finite group with an $\name_p$-subgroup $H$ of Lie rank $n$ such that $\F_f = \emptyset$.
Then $\Ap(G) \simeq_{G} \Delta \Bp(H)_{\F_g} \cong_{G} \Delta(H)_{\F_g}$, and the latter is homotopy equivalent to a wedge of spheres of dimension $n-1$ and dimension $m_E$, for each $E\in\F_g$ such that $O^{p'}(C_H(E))$ has rank $m_E$ (and no such spheres appear if $\F_g$ is empty).

Moreover, the inclusion $\Ap(H)\hookrightarrow \Ap(G)$ induces an injection in homology, and we have the following $G$-isomorphism of $G$-modules for all $m\geq 0$:
\[ \widetilde{H_m}(\Ap(G),\QQ) \cong \widetilde{H}_{m}(\Ap(H),\QQ) \oplus \bigoplus_{\overline{E}\in \F_g/G} \Ind_{N_G(E)}^G \big(\, \widetilde{H}_{m-1}(\Ap(C_H(E)),\QQ) \,\big).\]
\end{theorem}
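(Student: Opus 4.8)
The plan is to run the argument of the proof of Theorem~\ref{thm:fieldCase}, but with the Solomon--Tits/apartment-covering step of Corollary~\ref{coro:sphericalExtendedBuilding} (which there forced the dimension up to $n$) replaced by an elementary connectivity argument: in the present situation the fixed-point subcomplexes $\Delta(H)^E$ turn out to have dimension strictly smaller than $\Delta(H)$, so no dimension jump occurs and each of them sits null-homotopically inside $\Delta(H)$. First I would identify the relevant extending family. Since $\F_f=\emptyset$, I claim that every $E\in\F_G(H)$ is cyclic of order $p$. By Proposition~\ref{prop:FfFgFc}(1) such an $E$ has $p$-rank at most $2$; if $H$ is twisted, Proposition~\ref{prop:FfFgFc}(4) gives $E^*\le\Phi_H$, which is cyclic, so $\rk{E}\le 1$; if $H$ is untwisted and $\rk{E}=2$, then $E^*\cong C_p\times C_p$ inside $\Out(H)^*\le\Phi_H\Gamma_H$, and since $\Phi_H$ is cyclic and $\Gamma_H$ (of order $\le 6$) has cyclic Sylow $p$-subgroups, the normal subgroup $E^*\cap\Phi_H$ has order exactly $p$, whence the corresponding order-$p$ subgroup $E_0\le E$ has $E_0^*\le\Phi_H$ and so $E_0\in\F_f$ by Proposition~\ref{prop:FfFgFc}(2), a contradiction. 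Thus $\F_G(H)=\F_g\sqcup\F_c$ is an antichain and $\F_G(H)'=\F_g$ (since $\F_c$ consists, by definition, of the order-$p$ members with $O_p(C_H(E))\neq 1$). Proposition~\ref{prop:extendingByNormalSubgroups} applied with $K=G$ then yields the $G$-homotopy equivalence $\Ap(G)\simeq_G(\Delta\Bp(H))_{\F_g}$, which Remark~\ref{rk:BpGandBuilding}, together with the uniqueness Lemma~\ref{lm:uniqueLiep} (making the poset isomorphism $G$-equivariant), identifies with $\Delta(H)_{\F_g}$; here $\F_g$ is $G$-invariant, so this carries a compatible $G$-action.

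For the homotopy type I would put $K=\Delta(H)_{\F_g}$, $L=\Delta(H)$ -- a full subcomplex with $K\setminus L=\F_g$ discrete -- and invoke Theorem~\ref{thm:MVsequence}. For $E\in\F_g$ we have $O_p(C_H(E))=1$, so by Remarks~\ref{rk:fixedPointsByPgroup} and~\ref{rk:BpGandBuilding} the link $\Lk_L(E)=\Delta(H)^E$ is $N_G(E)$-homotopy equivalent to $\Ap(C_H(E))=\Ap(O^{p'}(C_H(E)))$, and by Proposition~\ref{prop:GLSprops}(2) and Table~\ref{tab:decreasingRanks} the group $O^{p'}(C_H(E))\in\Lie(p)$ has Lie rank $m_E$; so $\Lk_L(E)$ is spherical of dimension $m_E-1$. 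The crucial observation, read off directly from Table~\ref{tab:decreasingRanks}, is that $m_E<n$ in every row, hence $\dim\Lk_L(E)=m_E-1\le n-2$. Since $L=\Delta(H)$ is a wedge of $(n-1)$-spheres by the Solomon--Tits Theorem~\ref{thm:solomonTits}, it is $(n-2)$-connected, so each inclusion $\Lk_L(E)\hookrightarrow L$ is null-homotopic. Theorem~\ref{thm:MVsequence}(2) then gives
\[ K\;\simeq\;\Delta(H)\;\vee\;\bigvee_{E\in\F_g}\bigl(\Lk_L(E)*S_E\bigr), \]
and since $\Delta(H)$ is a wedge of $(n-1)$-spheres while $\Lk_L(E)*S_E$ is the (unreduced) suspension of a wedge of $(m_E-1)$-spheres, i.e.\ a wedge of $m_E$-spheres, this is exactly the claimed decomposition (no extra spheres when $\F_g=\emptyset$).

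Finally, for the homological statements I would use the long exact sequence of Theorem~\ref{thm:MVsequence}(1) for the same pair $(K,L)$. The maps $j\colon\bigoplus_{E\in\F_g}\widetilde H_m(\Lk_L(E),\ZZ)\to\widetilde H_m(\Delta(H),\ZZ)$ vanish for degree reasons -- the source is concentrated in degrees $m_E-1\le n-2$, where $\widetilde H_*(\Delta(H),\ZZ)=0$ -- so $i\colon\widetilde H_*(\Delta(H),\ZZ)\to\widetilde H_*(K,\ZZ)$ is injective and the sequence splits into short exact sequences, which over $\QQ$ split $G$-equivariantly by semisimplicity. Transporting along the $G$-equivalences $\Delta(H)\simeq_G\Delta\Bp(H)\simeq_G\Ap(H)$ and $K\simeq_G\Ap(G)$ from the first paragraph -- compatibly with the inclusions, exactly as in the proof of Theorem~\ref{thm:fieldCase} (via Theorem~\ref{thm:equivalentPosetApG} and Corollary~\ref{coro:ApAndExtended}) -- gives that $\Ap(H)\hookrightarrow\Ap(G)$ is injective in homology, and the last isomorphism of Theorem~\ref{thm:MVsequence}(2), with $\Stab_G(E)=N_G(E)$ and $\Lk_L(E)\simeq_{N_G(E)}\Ap(C_H(E))$, yields the stated $\QQ G$-module decomposition of $\widetilde H_m(\Ap(G),\QQ)$.

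The step I expect to be the real obstacle is not the topology but making sure that for every $E\in\F_g$ the link $\Delta(H)^E$ is $N_G(E)$-equivalent to the Quillen complex of a genuine group of Lie type of rank $m_E$ -- this rests on the precise centraliser structure in Proposition~\ref{prop:GLSprops}(2) -- and that the few non-quasisimple groups of Table~\ref{tab:excludedCases} that can still carry order-$p$ graph or graph-field automorphisms (most notably $\PSp_4(2)$ with $p=2$) do not break the conclusion; they do not, since there $O^{p'}(C_H(E))$ remains a rank-$1$ group of Lie type whose Quillen complex is a nonempty discrete set, i.e.\ still spherical of the expected dimension $m_E-1=0$.
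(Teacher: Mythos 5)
Your proposal is correct and follows essentially the same route as the paper: reduce to the extended complex $\Delta(H)_{\F_g}$ via Proposition \ref{prop:extendingByNormalSubgroups}, identify each link $\Delta(H)^E$ with the Bouc/Quillen complex of $O^{p'}(C_H(E))$ of rank $m_E<n$, conclude the inclusions into $\Delta(H)$ are null-homotopic for dimension/connectivity reasons (the paper packages this as Lemma \ref{lm:nullHomotopicDimension}), and apply Theorem \ref{thm:MVsequence}(2). Your extra verification that $\F_f=\emptyset$ forces every member of $\F_G(H)'$ to be cyclic of order $p$ is a detail the paper leaves implicit in its citation of Proposition \ref{prop:FfFgFc}, and it is carried out correctly.
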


\begin{proof}
Proposition \ref{prop:FfFgFc} and $\F_f=\emptyset$ imply that $\F_G(H)' = \F_g $ is an antichain (possibly empty).
Thus, similar to the proof of Theorem \ref{thm:fieldCase}, we have
$$\Ap(G) \simeq_{G} \Delta \Bp(H)_{\F_g} \cong_{G} \Delta(H)_{\F_g}.$$

Now, if $E\in \F_g$, then $\Lk_{\Delta(H)}(E) \cong \Delta(\Bp(H)^E) \simeq_{N_G(E)}  \Delta(\Bp(C_H(E)))$ by Remark \ref{rk:fixedPointsByPgroup}, and also
\[ \Bp(C_H(E)) = \Bp(O^{p'}(C_H(E))).\]
From Table \ref{tab:decreasingRanks}, we see that $O^{p'}(C_{H}(E)) \in \Lie(p)$ and $\Bp(O^{p'}(C_{H}(E)))$ has dimension $m_E-1$, which is strictly less than $n-1$.
Therefore, $\Lk_{\Delta(H)}(E)$ is $N_G(E)$-homotopy equivalent to a spherical complex of dimension strictly less than $n-1$, so the inclusion $\Lk_{\Delta(H)}(E) \hookrightarrow \Delta(H)$ is homotopy equivalent to a constant map by Lemma \ref{lm:nullHomotopicDimension}, for all $E\in \F_g$.
We get the desired conclusions by Theorem \ref{thm:MVsequence}(2).
\end{proof}

Now we can prove our main result:

\begin{theorem}
\label{thm:main}
Let $p$ be a prime and $G$ a finite group with an $\name_p$-subgroup $H$ of Lie rank $n$.
Then $\Ap(G)$ is homotopy equivalent to a wedge of spheres of dimension at most $n$.

Write $m_E$ for the Lie rank of $O^{p'}(C_H(E))$ if $E\in \F_g$.
Set $m_E^*=m_E-1$ if some element of $\F_f$ commutes with $E$, and $m_E^*=m_E$ otherwise.
The dimensions of the spheres that appear in the wedge decomposition of $\Ap(G)$ are as follows:
\begin{enumerate}
    \item If $\F_f = \emptyset$ then $(n-1)$-spheres appear.
    \item If $\F_f \neq \emptyset$ then $n$-spheres appear.
    \item If $E\in \F_g$ then spheres of dimension $m_E^*+1$ appear.
\end{enumerate}
Finally, if $\F_f\neq\emptyset$, we have the following $G$-module decomposition in homology.
Let $G_{df}$ be the group of Proposition \ref{prop:FfFgFc}.
Then
\begin{equation*}
\begin{aligned}
    \widetilde{H}_{*}( \Ap(G_{df}), & \ZZ )  = \widetilde{H}_{n}( \Ap(G_{df}),\ZZ )\\
    & = \Ker\left(\, \bigoplus_{\overline{E}\in \F_f/G} \Ind_{N_G(E)}^G\big(\widetilde{H}_{n-1}( \Ap(C_H(E)) ,\ZZ )\big) \overset{i}{\longrightarrow} \widetilde{H}_{n-1}( \Ap(H),\ZZ) \,\right),
\end{aligned}    
\end{equation*}
where $i$ is the $G$-equivariant surjective map induced by the inclusions $C_H(E)\leq H$,
and for $m\geq 0$ we have
\[ \widetilde{H}_m(\Ap(G),\QQ) = \widetilde{H}_m(\Ap(G_{df}),\QQ) \oplus \bigoplus_{\overline{E}\in \F_g/G} \Ind_{N_G(E)}^G\big( \widetilde{H}_{m-1}(\Ap(C_{G_{df}}(E),\QQ) \big),\]
where $\widetilde{H}_{m-1}(\Ap(C_{G_{df}}(E),\QQ) \big)\neq 0$ if and only if $m-1 = m_E^*$.
\end{theorem}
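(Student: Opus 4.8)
\emph{Plan and approach.} The plan is to deduce this from the two special cases already proved together with a two-stage application of Proposition \ref{prop:extendingByNormalSubgroups} along the chain $H\trianglelefteq G_{df}\trianglelefteq G$. If $\F_g=\emptyset$ everything is contained in Theorem \ref{thm:fieldCase}: applied to $G_{df}$ (keeping track of the action of the larger group $G$, which is legitimate since $H$ is characteristic in $G$ by Lemma \ref{lm:uniqueLiep}, $G_{df}\trianglelefteq G$, and the proof of that theorem runs through the $N_G(H,K)$-equivariant equivalences of Corollary \ref{coro:ApAndExtended}) it gives the sphericity of $\Ap(G_{df})$ and the kernel description of its $\ZZ$-homology, while $\F_G(G_{df})'=\F_g=\emptyset$ yields $\Ap(G)\simeq_G\Ap(G_{df})$ via Propositions \ref{prop:extendingByNormalSubgroups} and \ref{prop:ApSpBpGHomotopyEquivalences}. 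If $\F_f=\emptyset$ the statement is Theorem \ref{thm:noFieldCase} (there no element of $\F_f$ commutes with $E\in\F_g$, so $m_E^*=m_E-1$ and the spheres contributed by $E$ have dimension $m_E=m_E^*+1$, matching item (3)). So the genuine content is the mixed case $\F_f\neq\emptyset\neq\F_g$ together with the homology formulas, and that is where I would concentrate.

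\emph{Key structural step.} I would first establish $\F_{G_{df}}(H)'=\F_f$ (immediate from Proposition \ref{prop:FfFgFc}(7), since elements of $\F_c$ have $O_p(C_H(E))\neq1$) and $\F_G(G_{df})'=\F_g$. One inclusion of the latter is easy: if $E\cap G_{df}=1$ then $E$ embeds in $G/G_{df}$, which has $p$-rank at most $1$ by the structure of $\Out(H)^*$ (Proposition \ref{prop:FfFgFc}, \cite[Theorem~2.5.14]{GLS98}), so $E$ is cyclic of order $p$ with $E^*\notin\Phi_H$; moreover $O_p(C_H(E))$, being characteristic in $C_H(E)\trianglelefteq C_{G_{df}}(E)$, lies in $O_p(C_{G_{df}}(E))$, so $O_p(C_{G_{df}}(E))=1$ forces $O_p(C_H(E))=1$, whence $E\in\F_g$ by Proposition \ref{prop:GLSprops}(2). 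For the reverse inclusion I would show that for $E\in\F_g$ the group $L:=O^{p'}(C_H(E))\in\Lie(p)$, of Lie rank $m_E\leq n-1$ (Table \ref{tab:decreasingRanks}), is an $\name_p$-subgroup of $C_{G_{df}}(E)$: it is normal (characteristic in $C_H(E)\trianglelefteq C_{G_{df}}(E)$), and self-centralising because, by the centraliser analysis of \cite[Section~4.9]{GLS98}, no nontrivial inner-diagonal or field automorphism of $H$ commuting with $E$ acts trivially on $L$; then $O_p(C_{G_{df}}(E))\cap L\leq O_p(L)=1$, and since $[O_p(C_{G_{df}}(E)),C_H(E)]\leq O_p(C_{G_{df}}(E))\cap C_H(E)=1$ the group $O_p(C_{G_{df}}(E))$ centralises $L$, hence is trivial, i.e.\ $E\in\F_G(G_{df})'$. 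The same analysis shows that $C_{G_{df}}(E)$ induces on $L$ only inner-diagonal and field automorphisms, so $C_{G_{df}}(E)$ has $\name_p$-subgroup $L$ with empty $\F_g$-part, and its $\F_f$-part relative to $L$ is non-empty precisely when some element of $\F_f$ commutes with $E$.

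\emph{Assembly.} Granting this, Proposition \ref{prop:extendingByNormalSubgroups} (its antichain hypothesis holds since $\F_{G_{df}}(H)\cup\F_G(G_{df})$ consists of order-$p$ subgroups) together with $\F_{G_{df}}(H)'=\F_f$ and $\F_G(G_{df})'=\F_g$ gives
\[ \Ap(G)\ \simeq_G\ \big((\Delta\Bp(H))_{\F_f}\big)_{\F_g}\ \simeq_G\ (\Delta\Bp(G_{df}))_{\F_g}. \]
By Theorem \ref{thm:fieldCase} applied to $G_{df}$, the base $\Delta\Bp(G_{df})\simeq_G\Ap(G_{df})$ is spherical of dimension $n$ (hence $(n-1)$-connected) and its $\ZZ$-homology is the stated kernel. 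For $E\in\F_g$, Remark \ref{rk:fixedPointsByPgroup} and Proposition \ref{prop:ApSpBpGHomotopyEquivalences} give $\Lk_{\Delta\Bp(G_{df})}(E)=\Delta\Bp(G_{df})^E\simeq_{N_G(E)}\Delta\Bp(C_{G_{df}}(E))\simeq_{N_G(E)}\Ap(C_{G_{df}}(E))$, and applying Theorem \ref{thm:fieldCase} to $C_{G_{df}}(E)$ (with $\name_p$-subgroup $L$ and empty $\F_g$-part) this is spherical of dimension $m_E$ or $m_E-1$ according as some element of $\F_f$ commutes with $E$ or not, that is, of dimension $m_E^*$. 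Since $m_E^*\leq m_E\leq n-1<n$, each inclusion $\Lk_{\Delta\Bp(G_{df})}(E)\hookrightarrow\Delta\Bp(G_{df})$ is null-homotopic by Lemma \ref{lm:nullHomotopicDimension}, so Theorem \ref{thm:MVsequence}(2) yields
\[ \Ap(G)\ \simeq\ \Delta\Bp(G_{df})\ \vee\ \bigvee_{E\in\F_g}\Lk_{\Delta\Bp(G_{df})}(E)\join S_E, \]
whose $E$-th summand is the unreduced suspension of a wedge of $m_E^*$-spheres. This exhibits $\Ap(G)$ as a wedge of spheres of dimension at most $n$, with $n$-spheres coming from $\Delta\Bp(G_{df})$ (item (2)) and $(m_E^*+1)$-spheres from each $E\in\F_g$ (item (3)); item (1) is the $\F_f=\emptyset$ case above. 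The rational homology decomposition is then the $\QQ G$-splitting of Theorem \ref{thm:MVsequence}(2), using $\Stab_G(E)=N_G(E)$ and the above identification of the links, where $\widetilde H_{m-1}(\Ap(C_{G_{df}}(E)),\QQ)\neq0$ iff $m-1=m_E^*$ because $\Ap(C_{G_{df}}(E))$ is spherical of that dimension with non-zero reduced Euler characteristic (Corollary \ref{coro:eulerFieldCase}); and the $\ZZ$-homology of $\Ap(G_{df})$ is the kernel supplied by Theorem \ref{thm:fieldCase}.

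\emph{Main obstacle.} The hard part will be the structural step $\F_G(G_{df})'=\F_g$ --- concretely, verifying $O_p(C_{G_{df}}(E))=1$ and $\dim\Ap(C_{G_{df}}(E))=m_E^*$ for $E\in\F_g$ --- since these rest on the fine structure of centralisers of order-$p$ graph and graph-field automorphisms drawn from \cite{GLS98}, whereas the remainder is a routine assembly of Theorems \ref{thm:fieldCase}, \ref{thm:noFieldCase} and \ref{thm:MVsequence} through the two-step extension of Proposition \ref{prop:extendingByNormalSubgroups}.
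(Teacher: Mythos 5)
Your proposal is correct and follows essentially the same route as the paper: reduce to the mixed case, establish $\F_{G_{df}}(H)'=\F_f$ and $\F_G(G_{df})'=\F_g$, apply Proposition \ref{prop:extendingByNormalSubgroups} with $K=G_{df}$, identify each link $\Lk(E)\simeq \Bp(C_{G_{df}}(E))$ as spherical of dimension $m_E$ or $m_E-1$ via Theorem \ref{thm:fieldCase} applied to $C_{G_{df}}(E)$ with $\name_p$-subgroup $O^{p'}(C_H(E))$, and conclude by Theorem \ref{thm:MVsequence}(2); you in fact supply more detail than the paper on the key structural point that $O_p(C_{G_{df}}(E))=1$ and that $O^{p'}(C_H(E))$ is self-centralising in $C_{G_{df}}(E)$. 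One remark: when you say the link has ``dimension $m_E^*$'' you are using the convention $m_E^*=m_E$ when $E$ commutes with an element of $\F_f$, which is the convention of the introduction and of Example \ref{ex:DnChar2} but is the opposite of the definition printed in the theorem statement; your computation (and the paper's own proof) shows the printed definition has its two cases swapped.
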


\begin{proof}
Let $G$ and $H$ be as in the statement.
By Theorems \ref{thm:fieldCase} and \ref{thm:noFieldCase}, we may assume $\F_f,\F_g\neq\emptyset$.

By Proposition \ref{prop:FfFgFc}, $G_{df}$ is normal in $G$ and $\F_{G_{df}}(H)'=\F_f$.
We also claim that $\F_G(G_{df})'=\F_g$.
Indeed, the containment $\F_g\subseteq \F_G(G_{df})$ is clear, and thus it remains to see that elements of $\F_G(G_{df})$ have order $p$.
But this follows by Proposition \ref{prop:FfFgFc}(1).

With these observations in mind, we apply Proposition \ref{prop:extendingByNormalSubgroups} with $K = G_{df}$ to get a $G$-homotopy equivalence:
\[ \Ap(G) \simeq_G  \big( \, (\Delta \Bp(H))_{\F_f}\,\big)_{\F_g} = \big( \, \Delta(H)_{\F_f}\,\big)_{\F_g}.\]

Finally, let $E\in \F_g$, so $O^{p'}(C_H(E))\in \Lie(p)$ by Proposition \ref{prop:GLSprops}(2) and Table \ref{tab:decreasingRanks}, and denote its rank by $m_E$.

{Assume first that $C_{G_{df}}(E)$ contains field automorphisms of order $p$ (i.e., $E$ commutes with some subgroup in $\F_f$).
Then these induce field automorphisms of order $p$ on $O^{p'}(C_H(E))\leq C_{G_{df}}(E)$.
Since $O^{p'}(C_H(E))$ is an $\name_p$-subgroup of $C_{G_{df}}(E)$ (see Table \ref{tab:decreasingRanks} and compare with Table \ref{tab:excludedCases}), by Theorem \ref{thm:fieldCase} the poset $\Bp(C_{G_{df}}(E))$ has the homotopy type of a wedge of spheres of dimension $m_E$.
Suppose now that $C_{G_{df}}(E)$ contains no field automorphism of order $p$ (but we still have $\F_f\neq \emptyset$).
Then $\Bp(C_{G_{df}}(E)) \simeq \Bp(C_H(E)) = \Bp(O^{p'}(C_H(E)))$ is spherical of dimension $m_E-1$.}
In any case, $m_E < n$, and therefore
\[ \Lk_{ \Delta(H)_{\F_f}}(E)  \simeq \Bp(C_{G_{df}}(E)) \]
has the homotopy type of a spherical complex of dimension strictly less than $n = \dim \Delta(H)_{\F_f}$, and $\Delta(H)_{\F_f}$ is spherical by Theorem \ref{thm:fieldCase}.
Again, by Theorem \ref{thm:MVsequence}(2), we conclude that 
\[ \big( \, \Delta(H)_{\F_f}\,\big)_{\F_g} \]
is homotopy equivalent to a wedge of spheres, and from the wedge decomposition in such a theorem we see that the dimensions described in the statement are correct.
Also, the $G$-module decomposition as in the statement follows from Theorem \ref{thm:MVsequence}(2) and Theorem \ref{thm:fieldCase}.
\end{proof}

We illustrate the previous theorem in a very particular situation.

\begin{example}
\label{ex:DnChar2}
Suppose $p = 2$, and let $G$ be a finite group with $H = F^*(G)$ of type $D_n$, $n\geq 4$, over a field of even order.
Note $H$ is an $\name_p$-subgroup of $G$.
We assume that $\F_f,\F_g$ are non-empty, and write $H = D_n(q^2)$ with $q=2^a$.
It follows that $H$ is simple of order
\[ |H| = q^{2n(n-1)}(q^{2n}-1)\prod_{i=1}^{n-1}(q^{4i}-1).\]
In particular, $G\leq \Aut(H)$ is almost simple and  $\Inndiag(H) = H \leq G$.

Now, if $x\in G$ is an order-$2$ field automorphism of $H$, then any other order-$2$ field automorphism $y\in \Aut(H)$ of $H$ is $H$-conjugate to $x$, and hence $y\in H\gen{x} \leq G$ (see Proposition \ref{prop:GLSprops}).
That is, $G$ contains every order-$2$ field automorphism of $H$, so every element of $E\in \F_g$ commutes with some element of $\F_f$.
Therefore, for any $E\in \F_g$, $C_G(E)$ contains order-$2$ field automorphisms and $C_H(E)$ has Lie rank $n-1$, i.e., $m_E^* = n-1$.
Since the elements of $\F_g$ give rise to $n$-spheres by Theorem \ref{thm:main},
$\Ap(G)$ is homotopy equivalent to a wedge of $n$-spheres.




Finally, we show that $\widetilde{\chi}(\Ap(G)) = (-1)^n \dim \widetilde{H}_n(\Ap(G),\QQ) \neq 0$.
For $E\in \F_f$, $\Ap(H E)\subseteq \Ap(G)$, where the former has non-zero homology in degree $n$ by Theorem 6.6 of \cite{PS22} (see also Corollary \ref{coro:eulerFieldCase}), and $\widetilde{H}_n(\Ap(H E),\ZZ) \subseteq \widetilde{H}_n(\Ap(G),\ZZ)$ by Theorem \ref{thm:main}. Hence $(-1)^n\widetilde{\chi}(\Ap(G)) \geq (-1)^n\widetilde{\chi}(\Ap(H E)) > 0$.
\end{example}

A straightforward computation with the ranks of the centralisers yields the following corollary on the fundamental group.
This complements the results obtained in \cite{MP}, and provides new examples of simply connected $p$-subgroup posets which can be used to compute endotrivial modules according to \cite{Jesper}.

\begin{corollary}
Let $p$ be a prime and $G$ a group with an $\name_p$-subgroup $H$.
Then $\Ap(G)$ has free fundamental group.

Moreover, if $\Ap(G)$ is connected then $\pi_1(\Ap(G))$ is non-trivial if and only if one of the following holds:
\begin{enumerate}
    \item $H$ has Lie rank $1$ and $G$ contains field automorphisms of order $p$;
    \item $H$ has Lie rank $2$ and $G$ contains no field automorphisms of order $p$;
    \item $H = A_2(2^a)$, $p = 2$, and there exists $E\in \F_g$ such that no element of $\F_f$ commutes with $E$.
\end{enumerate}
\end{corollary}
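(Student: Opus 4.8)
The plan is to read the entire statement off from Theorem~\ref{thm:main} together with the list of centraliser ranks in Table~\ref{tab:decreasingRanks}; as the statement advertises, this is essentially bookkeeping once the wedge decomposition is made explicit.

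For the first assertion I would simply invoke Theorem~\ref{thm:main}: $\Ap(G)$ is homotopy equivalent to a wedge of spheres, and any space homotopy equivalent to a wedge of spheres has free fundamental group, since by the Seifert--van Kampen theorem $\pi_1$ of a wedge of spheres is the free product of the fundamental groups of the wedge summands, each of which is $\ZZ$ (for $S^1$) or trivial (for $S^{\geq 2}$). When $\Ap(G)$ is disconnected — which, by the description below, happens only if $H$ has Lie rank $1$ and $\F_f=\emptyset$, in which case $\Ap(G)\simeq\Delta(H)$ is a discrete set — each path component is a point and the claim is trivial.

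For the second assertion I would first make the decomposition explicit. Iterating Theorem~\ref{thm:MVsequence}(2) as in the proofs of Theorems~\ref{thm:fieldCase} and~\ref{thm:noFieldCase} gives
\[ \Ap(G)\;\simeq\; B \;\vee\; \bigvee_{E\in\F_g}\big(\Lk_{\Delta(H)_{\F_f}}(E)\ast S^0\big), \]
where $B=\Delta(H)_{\F_f}$ is a wedge of $n$-spheres if $\F_f\neq\emptyset$ and is the building $\Delta(H)$, a wedge of $(n-1)$-spheres, if $\F_f=\emptyset$; and, for $E\in\F_g$, $\Lk_{\Delta(H)_{\F_f}}(E)\simeq\Bp(C_{G_{df}}(E))$ is a wedge of $m_E$-spheres if some element of $\F_f$ centralises $E$ and of $(m_E-1)$-spheres otherwise, $m_E$ being the Lie rank of $O^{p'}(C_H(E))$ (so, following Example~\ref{ex:DnChar2}, the join with $S^0$ being a suspension, the summand attached to such an $E$ is a wedge of $(m_E+1)$-spheres, resp.\ of $m_E$-spheres). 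Assuming $\Ap(G)$ connected, every sphere here has dimension $\geq 1$, so $\pi_1(\Ap(G))$ is free on the set of $1$-spheres, and is non-trivial exactly when a $1$-sphere occurs. Now a $1$-sphere occurs in $B$ precisely when $\F_f=\emptyset$ and $n=2$, or $\F_f\neq\emptyset$ and $n=1$; in the first case $\Delta(H)$ is the thick building of a rank-$2$ group, hence a non-trivial wedge of circles by Solomon--Tits (Theorem~\ref{thm:solomonTits}), and in the second $\widetilde H_1(B)\neq0$ by Theorem~\ref{thm:fieldCase} (equivalently by the non-vanishing in Corollary~\ref{coro:eulerFieldCase}); since $\F_f\neq\emptyset$ means precisely that $G$ contains field automorphisms of order $p$, these are exactly cases (2) and (1). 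A $1$-sphere occurs in the summand of some $E\in\F_g$ precisely when $m_E=1$ and no element of $\F_f$ centralises $E$; then that summand is the suspension of the thick rank-$1$ building $\Bp(O^{p'}(C_H(E)))$, which is non-trivial, and the corresponding term survives in $\widetilde H_1(\Ap(G),\QQ)$ by the homology formula of Theorem~\ref{thm:main}. Scanning Table~\ref{tab:decreasingRanks}, $m_E=1$ for some $E\in\F_g$ forces $p=2$ and $H\in\{B_2(q),A_2(q)\}$ with $q$ a power of $2$ (both of Lie rank $2$); for $H=B_2(q)$ one has $q=2^{2a+1}$ and $\Out(H)^\ast=\langle\psi|_H\rangle$ is cyclic whose unique involution is a graph automorphism, so $\F_f=\emptyset$ and this is already case (2), leaving $H=A_2(2^a)$ — for which every $E\in\F_g$ has $m_E=1$ — which is exactly condition (3).

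Conversely, if none of (1)--(3) holds then $B$ contributes no $1$-sphere (that would require $\F_f=\emptyset$ with $n=2$, i.e.\ (2), or $\F_f\neq\emptyset$ with $n=1$, i.e.\ (1)), and no summand of an $E\in\F_g$ contributes a $1$-sphere, since that needs $m_E=1$, which by the scan above forces $H=B_2(2^{2a+1})$ — whence $\F_f=\emptyset$ and $n=2$, so (2) would hold — or $H=A_2(2^a)$, in which case it needs some $E\in\F_g$ centralised by no element of $\F_f$, i.e.\ (3); both are excluded. Hence all spheres have dimension $\geq2$ and $\pi_1(\Ap(G))=1$. The only real content beyond Theorem~\ref{thm:main} and van Kampen is this last bookkeeping: pinning down that $m_E=1$ occurs only for $H$ of type $B_2$ or $A_2$ over a field of even order, and the observation that $B_2(2^{2a+1})$ carries no field automorphism of order $2$ (its unique outer involution being a graph automorphism), which is what absorbs that case into (2); some care is also needed with the convention for the sphere dimension attached to $E\in\F_g$, namely that $E$ centralised by a field involution yields $(m_E+1)$-spheres and $E$ not so centralised yields $m_E$-spheres (cf.\ Example~\ref{ex:DnChar2}).
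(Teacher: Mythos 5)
Your proof is correct and is exactly the "straightforward computation with the ranks of the centralisers" that the paper asserts (without writing out) after Theorem \ref{thm:main}: read off the sphere dimensions from the wedge decomposition, note that $1$-spheres can only come from $B=\Delta(H)_{\F_f}$ when $n=1$, $\F_f\neq\emptyset$ or $n=2$, $\F_f=\emptyset$, or from an $E\in\F_g$ with $m_E^*=0$, and observe that Table \ref{tab:decreasingRanks} forces $H=B_2(2^{2a+1})$ (absorbed into case (2) since $\Phi_H$ has odd order) or $H=A_2(2^a)$. You also correctly use the convention $m_E^*=m_E$ when $E$ is centralised by an element of $\F_f$ (as in the proof of Theorem \ref{thm:main} and the introduction), which is the right one despite the swapped phrasing in the statement of Theorem \ref{thm:main}.
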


Recall that $\Ap(G)$ is disconnected if and only if $G$ has a strongly $p$-embedded subgroup \cite[Proposition 5.2]{Qui78}, and such groups have been classified during the proof of the CFSG (see, e.g., Chapter 7, Section 6 of \cite{GLS98}).

\section{Euler characteristic}
\label{sec:euler}

Recall that Quillen's conjecture states that if $O_p(G) = 1$ then $\Ap(G)$ is not contractible \cite{Qui78}.
In fact, under $O_p(G) = 1$ one usually tries to show that $\Ap(G)$ has non-zero rational homology.
For example, Quillen established this stronger conclusion for solvable groups, and this is the approach followed by M. Aschbacher and S.D. Smith \cite{AS93} to prove the conjecture for a wide class of groups.
In some special cases, it was even shown that $O_p(G) = 1$ leads to a non-zero Lefschetz module for $\Ap(G)$, which in particular implies that $\Ap(G)$ is not $\QQ$-acyclic.
For instance, this is the case for almost simple groups by the results of \cite{AK}.
So far, Quillen's conjecture remains open, and we refer to \cite{PS22} for recent developments.

In view of \cite{AK}, it is tempting to ask if for an almost simple group $G$ one has indeed $\widetilde{\chi}(\Ap(G)) \neq 0$.
For instance, this automatically holds if one of the $p$-subgroup posets $\Ap(G)$ or $\Bp(G)$ is spherical and non-contractible.
This is the case for $G$ a group of Lie type in characteristic $p$ since $\Delta \Bp(G) = \Delta(G)$ is the building of the group (see Remark \ref{rk:BpGandBuilding}).

In light of these observations, we propose:

\begin{conjecture}
\label{conj:euler}
If $G$ is an almost simple group, then $\widetilde{\chi}(\Ap(G))\neq 0$.
\end{conjecture}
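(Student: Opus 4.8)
The plan is to reduce to the socle and then split along the classification of finite simple groups. Let $G$ be almost simple with socle $S=F^*(G)$. If $p\nmid|G|$ then $\Ap(G)=\emptyset$ and $\widetilde\chi(\Ap(G))=-1\neq 0$, so assume $p\mid|G|$; then $O_p(G)=1$ automatically, since $O_p(G)\cap S$ is a normal $p$-subgroup of the simple group $S$ and so $O_p(G)\leq C_G(S)=1$. By the CFSG, $S$ is of Lie type (in some characteristic $r$), alternating, or sporadic, and I would treat these three cases by quite different means.

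The Lie type case is precisely the content of this paper. If $S\in\Lie(p)$ is quasisimple---the generic situation, the exceptions being the eight groups of Table~\ref{tab:excludedCases}, disposed of directly as in Example~\ref{ex:titsGroup}---then $S$ is an $\name_p$-subgroup of $G$, so Theorem~\ref{thm:main} realises $\Ap(G)$ as a wedge of spheres together with an explicit homology formula, and one reads off $\widetilde\chi(\Ap(G))$ as in Corollary~\ref{coro:euler}. When $\F_g=\emptyset$ the spheres sit in a single dimension ($n$ if $\F_f\neq\emptyset$, else $n-1$) and their count is the nonzero rank of a building homology, so $\widetilde\chi(\Ap(G))\neq 0$; when $\F_g\neq\emptyset$ one must check that the signed dimension count over the summands indexed by $\F_f$ and by the orbits in $\F_g$ (whose spheres lie in dimensions $m_E^*+1\le n$) cannot cancel, which works except possibly when $p=2$ and $S\in\{A_n(4^a),E_6(4^a)\}$ with $\F_f,\F_g$ both nonempty---finitely many parametrised families, to be handled by a separate, longer analysis of the kernel description of $\widetilde H_n(\Ap(G_{df}),\ZZ)$. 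If instead $r\neq p$, then $S$ is still a self-centralising normal subgroup of $G$ that is a group of Lie type, and one invokes the cross-characteristic half of this paper's Euler characteristic result to conclude $\widetilde\chi(\Ap(G))\neq 0$.

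For $S$ alternating or sporadic the wedge-decomposition machinery is unavailable---indeed $\Ap(G)$ may carry torsion and even a non-free fundamental group (Shareshian's $A_{13}$ at $p=3$; the $A_{10}$ example of \cite{MP})---so $\widetilde\chi$ must be extracted from a genuine chain complex. I would first peel off primes large relative to $S$: if the Sylow $p$-subgroup of $G$ is cyclic, then every member of $\Ap(G)$ has order $p$, the poset $\Ap(G)$ is discrete, and $\widetilde\chi(\Ap(G))$ equals the number of order-$p$ subgroups of $G$ minus one, which is positive because a unique order-$p$ subgroup would be normal in $G$, contradicting $O_p(G)=1$. The remaining finitely many pairs $(S,p)$---the sporadic groups with $p\ge 7$ and the small alternating groups at $p=5$---are settled by direct computation in \cite{Gap}, while for the infinite family $S=A_m$ with $p=5$ one appeals to the known structure of $\Ap(A_m)$ and to the reductions of \cite{AS93}. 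Together this establishes the conjecture for every almost simple $G$ with $p\ge 7$, matching the abstract.

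The main obstacle is the residual list left untouched: sporadic $S$ with $p\in\{2,3,5\}$ and alternating $S$ with $p\in\{2,3\}$. For these the Lefschetz module is known to be nonzero (\cite{AK} for almost simple groups), but that yields only non-$\QQ$-acyclicity, not non-vanishing of the \emph{alternating} sum of Betti numbers, which is what the conjecture demands; the homology genuinely spreads over several degrees and could a priori cancel. A brute-force evaluation of $\widetilde\chi(\Ap(S))$ is hopeless for the largest sporadic groups at $p=2$, so a realistic attack would aim either at a uniform congruence---showing $\widetilde\chi(\Ap(G))$ is congruent, modulo a suitable power of $p$, to a manifestly nonzero $p$-local invariant, in the spirit of the classical Euler-characteristic congruences of Brown and Quillen---or at an Aschbacher--Smith-style reduction to component data, where the Lie type results of this paper could be brought to bear. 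Proving such a statement uniformly across all simple groups is, I expect, the crux.
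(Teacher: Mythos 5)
This is a conjecture, and neither you nor the paper proves it; the paper's contribution is the partial evidence of Corollary~\ref{coro:eulerAlmostSimple}, which your sketch parallels in structure. Your treatment of the Lie-type socle matches the paper exactly: cross-characteristic via the Sylow-$r$ fixed-point congruence (Lemma~\ref{lm:crossCharacteristic}), equicharacteristic via Theorem~\ref{thm:main} and the Zsigmondy-prime argument (Proposition~\ref{prop:equicharacteristcEuler}), with the same residual $A_n(4^a)$ and $E_6(4^a)$ exceptions. Your speculation about Brown--Quillen-style congruences is exactly what the paper uses, both in Lemma~\ref{lm:crossCharacteristic} and in the orbit formula~(\ref{eq:eulerCharOrbits}).

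Where your plan diverges from the paper --- and where it would not deliver the $p\geq 7$ claim of the abstract --- is in the sporadic case. You peel off primes using a \emph{cyclic Sylow} criterion, which gives a discrete $\Ap(G)$ and a positive $\widetilde\chi$; that is correct as far as it goes, but it only handles $p$-rank~$1$. The paper instead uses $p$-rank $\leq 2$ together with Table~5.6.1 of~\cite{GLS98}, which already clears all sporadic socles at $p\geq 7$ except $F_1$ at $p=7$. Your proposal then wants to settle the leftovers by ``direct computation in~\cite{Gap},'' but a brute-force pass over $\A_7(F_1)$ is not feasible (the Monster has order~$\approx 8\times 10^{53}$). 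The paper circumvents this by invoking Yoshiara's classification of radical $7$-subgroups~\cite{Y1} together with~(\ref{eq:eulerCharOrbits}): one checks that a fixed prime $r\neq 7$ divides every index $|G:N_G(R)|$ for $R\in\B_7(F_1)$, so $\widetilde\chi(\B_7(F_1))\equiv -1\pmod r$. That step is indispensable and cannot be replaced by a raw GAP run. A smaller point: for alternating socles at $p=5$ the paper cites the argument of Aschbacher--Kleidman~\cite{AK}, not the Aschbacher--Smith reductions of~\cite{AS93}; the latter gives non-vanishing homology but does not by itself preclude cancellation in the alternating sum, which is what the conjecture requires. Your concluding assessment of what is genuinely open --- sporadic at $p\in\{2,3,5\}$, alternating at $p\in\{2,3\}$, and the two Lie-type families at $p=2$ --- is exactly the exception list of Corollary~\ref{coro:eulerAlmostSimple}.
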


To support this conjecture, in this section we show that $\widetilde{\chi}(\Ap(G))\neq 0$ for a large class of almost simple groups (see Corollary \ref{coro:eulerAlmostSimple}), starting with those such that $F^*(G)\in \Lie(r)$, for some prime $r$.

Below we show that $\widetilde{\chi}(\Ap(G)) \neq 0$ for any group $G$ containing an $\name_r$-subgroup for a prime $r\neq p$.
The proof is adapted from Proposition 8.2 of \cite{PS22}.

\begin{lemma}
\label{lm:crossCharacteristic}
Let $r$ be a prime and $G$ a group that contains an $\name_r$-subgroup.
If $p\neq r$ and $O_p(G) = 1$, then $\widetilde{\chi}(\Ap(G)) \neq 0$.
\end{lemma}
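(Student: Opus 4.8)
The plan is to exploit the cross-characteristic structure of an $\name_r$-subgroup $H$ to reduce the Euler characteristic computation to a simpler group where the building of $H$ does not interfere. First I would set $L = O^{r'}(H)$, the subgroup of $H$ generated by its Sylow $r$-subgroups; since $r\neq p$, the $p$-local structure of $H$ should be concentrated in its "$r'$-part" in a suitable sense. More precisely, I would like to show that $C_G(L)$ contains a nontrivial normal $p$-subgroup once one restricts attention properly, or alternatively that $\Ap(G)$ decomposes via a join or a wedge in a way that isolates the contribution of $H$. The model to follow is Proposition 8.2 of \cite{PS22}; the key point there (adapted) is that if $H\in\Lie(r)$ with $r\neq p$, then the elementary abelian $p$-subgroups of $H$ have a very rigid structure governed by the maximal tori and their normalisers, and in particular $H$ itself contributes trivially to $\widetilde{\chi}$ in the relevant degree, so that the reduced Euler characteristic of $\Ap(G)$ is controlled by that of a proper section or a smaller subgroup.

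The main steps I would carry out are as follows. Step one: reduce to the case $H = F^*(G)$, or at least arrange that $C_G(H) = Z(H)$ is an $r$-group (hence a $p'$-group) and $H$ is normal, using the self-centralising and normality hypotheses in the definition of $\name_r$-subgroup. Step two: analyse $\Ap(H)$ directly. Because $H\in\Lie(r)$ with $r\neq p$, one knows (e.g. from the generic torus theory or from the Aschbacher–Smith type analysis) that either $O_p(H)\neq 1$ — impossible here since then $O_p(G)\neq 1$ after using normality — or $\Ap(H)$ has a computable, typically non-vanishing reduced Euler characteristic, and moreover the relevant homological contributions are "inflated" from $\Ap$ of centralisers of semisimple-type elements. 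Step three: feed this into the multiplicativity/additivity of $\widetilde{\chi}$ under the standard fibration-type decompositions: using $O_p(G)=1$ to rule out the contractible case, and using the structure of $\Ap(G)$ relative to $\Ap(H)$, write $\widetilde{\chi}(\Ap(G))$ as a sum indexed by $G$-orbits of subgroups, where each summand is (up to sign) a product of Euler characteristics of $p$-group complexes of proper subgroups. Step four: show the sum does not collapse to zero, which is where a parity or positivity argument enters — typically one shows the dominant term (coming from $\Ap(H)$ itself, or from $H$ together with a Sylow $p$-normaliser) has a definite sign and strictly dominates the rest, perhaps via an inequality $(-1)^d\widetilde{\chi}(\Ap(G)) \geq (-1)^d\widetilde{\chi}(\Ap(HE)) > 0$ analogous to the one used in Example \ref{ex:DnChar2}.

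I expect the main obstacle to be Step two together with Step four: controlling $\widetilde{\chi}(\Ap(H))$ for $H\in\Lie(r)$ in cross characteristic $p$ in enough detail to guarantee a definite sign. Unlike the equal-characteristic case, where $\Ap(H)\simeq\Delta(H)$ is the building and the Euler characteristic is $(-1)^{n-1}|H|_r$ up to sign (hence manifestly nonzero), in cross characteristic the homotopy type of $\Ap(H)$ can be complicated, and a priori cancellation could occur. The way around this, following \cite{PS22}, is presumably not to compute $\widetilde{\chi}(\Ap(H))$ itself but to use the weighted-Euler-characteristic / Möbius-function formalism: $\widetilde{\chi}(\Ap(G))$ can be expressed as a sum over chains or over conjugacy classes of $p$-subgroups of terms $|\mu(\cdot)|$-weighted, and one isolates a single term — coming from a well-chosen $p$-subgroup whose normaliser is essentially $N_G(S)$ for $S\in\Syl_p$ — that cannot be cancelled for parity reasons because the relevant $p$-local subgroups have $O_p\neq 1$ except in that one spot. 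The hypothesis $O_p(G)=1$ is exactly what forces that spot to be genuinely there and contribute. So the delicate part is the bookkeeping that shows this one term survives; everything else is an application of results already available.

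If instead the intended argument is shorter — namely that when $r\neq p$ one always has $\Ap(G)\simeq_G \Ap(N_G(\text{some }p\text{-torus of }H))$-type reduction making $\Ap(G)$ itself spherical or at least forcing $\widetilde{\chi}\neq 0$ by a direct count — then the plan simplifies to: (i) locate the relevant $p$-structure inside $H$ via maximal tori $T$ with $|T|_p$ maximal; (ii) use that $N_H(T)/T$ acts, and $C_H(\Omega_1(O_p(T)))$ has a normal $p$-subgroup unless forced otherwise; (iii) conclude via Proposition \ref{prop:contractibleAp} and an inclusion-exclusion that the reduced Euler characteristic picks up a nonzero contribution. Either way, the hypothesis $O_p(G)=1$ and the self-centralising property of $H$ are the two levers, and the obstacle is ensuring no cancellation in the resulting alternating sum.
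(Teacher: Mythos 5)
There is a genuine gap: your proposal never isolates the actual mechanism the paper uses, and the strategies you do sketch either are not the paper's argument or would run into serious difficulties. The paper's proof of Lemma~\ref{lm:crossCharacteristic} is short and does not try to understand $\widetilde{\chi}(\Ap(H))$, $\Ap(H)$, or any homological decomposition of $\Ap(G)$ in cross characteristic at all. It takes $Q$ a Sylow $r$-subgroup of the $\name_r$-subgroup $H$, shows that $\Ap(G)^Q=\emptyset$ (i.e.\ no nontrivial elementary abelian $p$-subgroup of $G$ is normalised by $Q$; this is the content of Proposition 8.2 of \cite{PS22}, which ultimately rests on the Borel--Tits theorem and Proposition~C of \cite{SST}), and then applies the elementary fixed-point congruence for an $r$-group acting on a finite poset: $\widetilde{\chi}(\Ap(G))\equiv\widetilde{\chi}(\Ap(G)^Q)\equiv -1\pmod r$, hence $\widetilde{\chi}(\Ap(G))\neq 0$. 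There is no "bookkeeping," no alternating sum over chains, no parity analysis, and no positivity estimate; the nonvanishing drops out of a single congruence.

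Your Step two (``analyse $\Ap(H)$ directly'' in cross characteristic) is precisely what the paper avoids, and for good reason: the homotopy type and Euler characteristic of $\Ap(H)$ for $H\in\Lie(r)$ and $p\neq r$ are in general not controlled and could have either sign, so there is no obvious way to rule out cancellation. Your Step three and Step four propose a Möbius-function / weighted Euler-characteristic argument and then a dominant-term positivity argument; neither is carried out and neither is what Proposition 8.2 of \cite{PS22} does. The inequality $(-1)^d\widetilde{\chi}(\Ap(G))\geq (-1)^d\widetilde{\chi}(\Ap(HE))>0$ you cite is the device used in Example~\ref{ex:DnChar2} in the \emph{equi}characteristic situation, where sphericity is available --- it is not applicable here. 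Your alternative route (maximal tori $T$ with $|T|_p$ maximal, $C_H(\Omega_1(O_p(T)))$ having a normal $p$-subgroup, inclusion-exclusion) is also not the paper's argument and is left too vague to assess. To repair the proposal you should replace Steps two through four by: fix $Q\in\Syl_r(H)$; show that every elementary abelian $p$-subgroup of $G$ normalised by $Q$ is trivial (reduce modulo $Z(H)$, invoke \cite[Prop.\ 8.2]{PS22} when $H/Z(H)$ is simple, and handle the finitely many non-quasisimple $H$ from Table~\ref{tab:excludedCases} by direct inspection or computation); and conclude via the $\bmod\ r$ congruence on fixed points. You also need to address the groups of Table~\ref{tab:excludedCases}, which your proposal does not mention and which the Borel--Tits input does not cover uniformly; the paper treats $G_2(2)$, ${}^2F_4(2)$, and $\PSp_4(2)$ by direct computation.
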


\begin{proof}
The proof is essentially the same as in Proposition 8.2 of \cite{PS22} in the case $F^*(G)\in \Lie(r)$ is simple.

Let $H$ be an $\name_r$-subgroup of $G$.
We will invoke the consequences of the Borel-Tits theorem, so we suppose first that $H\not\groupiso\PSp_4(2)$ (since such a theorem does not apply for this group), and also $O_p(G) = 1$ (in particular, $Z(H) = C_G(H)$ is a $p'$-group).
We take $Q$ to be a Sylow $r$-subgroup of $H$ and show that $\Ap(G)^Q$ is empty.
Indeed, if this holds, then an elementary counting argument shows that
\[ \widetilde{\chi}(\Ap(G)) \equiv \widetilde{\chi}(\Ap(G)^Q) \equiv -1 \pmod{r},\]
proving our assertion.

We shall prove that $\Ap(G)^Q$ is empty.
Take $E$ an elementary abelian $p$-subgroup of $G$ normalised by $Q$.
We show that $E = 1$.
Write $\overline{K}$ for the image of a subgroup $K$ of $G$ in the quotient $G/Z(H)$.
Since $Z(H)$ has order prime to $p$ and $r$, we see that $Q\groupiso \overline{Q}$ normalises $\overline{E}\groupiso E$, so it is enough to show the assertion for $Z(H) = 1$.
Then $E = 1$ follows from Proposition 8.2 of \cite{PS22} when $H$ is simple.
Hence we may assume that $H$ is as in the first column of Table \ref{tab:excludedCases}, with $Z(H) = 1$.
Moreover, the same argument given in \cite{PS22} applies to the cases $H = A_1(2)$, $A_1(3)$, ${}^2A_2(2)$, ${}^2B_2(2)$, and ${}^2G_2(3)$ since the proof of  \cite[Proposition 8.2]{PS22} invokes the conclusion ``$O_r(M)\neq 1$" of Proposition C of \cite{SST} (with $p$ there our prime $r$ here), which for these groups also holds by Lemma 2.1 of \cite{SST}, and the Borel-Tits theorem which applies to these cases as well.

Therefore, we are reduced to the cases $G = H = G_2(2) = \Aut(\PSU_3(3))$ and $G = H = {}^2F_4(2)$, with $r = 2$, and here $\Ap(G)^Q = \emptyset$ by direct computation.

Finally, $\widetilde{\chi}(\Ap(G))\neq 0$ for the case $H\groupiso \PSp_4(2)$, $r=2$, and $p=3,5$ follows by straightforward computations using that $[H,H]\groupiso \PSL_2(9)$ (see Example \ref{ex:alt6case}).
\end{proof}

\begin{example}
\label{ex:alt6case}
Let $G$ be an almost simple group such that $F^*(G) = \Alt_6 \groupiso \PSL_2(9)$.
Note that $\Bp(G) = \Bp(F^*(G))$ for odd primes $p=3,5$.

Now, if $p = 3$ then $\widetilde{\chi}(\A_3(G)) = \widetilde{\chi}(\Delta(\PSL_2(9))) = 9$.
On the other hand, since a Sylow $5$-subgroup $S$ of $\PSL_2(9)$ is cyclic of order $5$, for $p = 5$ the poset $\B_5(G)$ is disconnected with one connected component per Sylow $5$-subgroup.
Since $N_G(S)\groupiso D_{10}$, the Dihedral group of order $10$, we conclude that $\widetilde{\chi}(\A_5(G)) = |G|/|N_G(S)|-1 = 35$.

Finally, for $p = 2$, the different possibilities for $G$ and $\widetilde{\chi}(\B_p(G))$ are as follows:
If $G \groupiso \PSL_2(9)$, $\Sym_6$ or the Mathieu group $M_{10}$, then $\widetilde{\chi}(\B_2(G)) = -16$; and otherwise, $G \groupiso \PGL_2(9)$ or $\Aut(\Alt_6)$ with $\widetilde{\chi}(\B_2(G)) = -160$.
In any case, $\B_2(G)$ is spherical of dimension $1$.
\end{example}

Next, we study the equicharacteristic case $p = r$ and show that $\widetilde{\chi}(\Ap(G))\neq 0$ if $G$ contains an $\name_p$-subgroup, except possibly for some particular cases.
We will use the following expression for the Euler characteristic, derived from the conclusions of Theorem \ref{thm:main}.

\begin{corollary}
\label{coro:eulerCharFormula}
Let $p$ be a prime and $G$ a group containing an $\name_p$-subgroup $H$ of Lie rank $n$. Then:
\[ \widetilde{\chi}(\Ap(G)) = (-1)^{n-1}|H|_p - (-1)^{n-1} \sum_{\overline{E}\in \F_f/G} \frac{|G|}{|N_G(E)|} |C_H(E)|_p - \sum_{\overline{E}\in \F_g/G} \frac{|G|}{|N_G(E)|} \widetilde{\chi}(\Ap(C_{G_{df}}(E))).\]
In particular, if $r\neq p$ is a prime such that $r\mid |G|/|N_G(E)|$ for all $E\in \F_f\cup \F_g$ then $\widetilde{\chi}(\Ap(G)) \neq 0$.
\end{corollary}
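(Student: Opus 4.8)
The plan is to read off $\widetilde{\chi}(\Ap(G))$ from the iterated extended-complex model of Theorem \ref{thm:main}, exploiting that the reduced Euler characteristic behaves additively across the pushout of Theorem \ref{thm:MVsequence}. The elementary input is the identity
\[ \widetilde{\chi}(L_{\F}) \;=\; \widetilde{\chi}(L) \;-\; \sum_{E\in\F}\widetilde{\chi}\big(\Lk_L(E)\big) \]
for an $\F$-extended complex $L_{\F}$ as in Definition \ref{def:complexExtended}. This follows immediately from the long exact sequence of Theorem \ref{thm:MVsequence}(1) (an exact sequence has Euler characteristic zero), or alternatively from ordinary inclusion--exclusion for $\chi$ over the open stars of the vertices of $\F$, each of which is an open cone on $\Lk_L(E)=L^E$ and hence contributes $1-\chi(L^E)$.

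By Theorem \ref{thm:main}, and by Theorems \ref{thm:fieldCase} and \ref{thm:noFieldCase} in the degenerate cases where one of $\F_f,\F_g$ is empty, we have $\Ap(G)\simeq_G \big(\Delta(H)_{\F_f}\big)_{\F_g}$. Applying the additivity identity twice --- first to the outer extension by $\F_g$, then to the inner extension of $\Delta(H)$ by $\F_f$ --- gives
\[ \widetilde{\chi}(\Ap(G)) = \widetilde{\chi}(\Delta(H)) - \sum_{E\in\F_f}\widetilde{\chi}\big(\Delta(H)^E\big) - \sum_{E\in\F_g}\widetilde{\chi}\big(\Lk_{\Delta(H)_{\F_f}}(E)\big), \]
since $\Lk_{\Delta(H)}(E)=\Delta(H)^E$ for $E\in\F_f$. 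I would then evaluate the three pieces. By the Solomon--Tits Theorem \ref{thm:solomonTits}, $\Delta(H)=\Delta\Bp(H)$ is a wedge of spheres of dimension $n-1$ whose number is the dimension of the Steinberg module, namely $|H|_p$, so $\widetilde{\chi}(\Delta(H))=(-1)^{n-1}|H|_p$. For $E\in\F_f$, Lemma \ref{lm:fieldAutomorphismsAndFixedPointsBuilding} together with Proposition \ref{prop:GLSprops}(1) identifies $\Delta(H)^E\simeq\Delta\Bp(C_H(E))=\Delta\big(O^{p'}(C_H(E))\big)$, a building of the \emph{same} Lie rank $n$, giving $\widetilde{\chi}(\Delta(H)^E)=(-1)^{n-1}|C_H(E)|_p$. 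For $E\in\F_g$, the proof of Theorem \ref{thm:main} identifies $\Lk_{\Delta(H)_{\F_f}}(E)\simeq\Bp(C_{G_{df}}(E))\simeq\Ap(C_{G_{df}}(E))$, contributing $\widetilde{\chi}(\Ap(C_{G_{df}}(E)))$. Finally, since $\F_f$ and $\F_g$ are $G$-invariant, $\widetilde{\chi}$ is constant on $G$-orbits and the stabiliser of the vertex $E$ is $N_G(E)$; hence each of the two sums rewrites as a sum over orbits $\overline{E}\in\F_f/G$, resp.\ $\F_g/G$, weighted by $|G|/|N_G(E)|$. This is precisely the stated formula.

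For the final assertion, fix a prime $r\neq p$ with $r\mid |G|/|N_G(E)|$ for every $E\in\F_f\cup\F_g$ and reduce the formula modulo $r$: every summand of the two orbit sums carries a factor divisible by $r$, so $\widetilde{\chi}(\Ap(G))\equiv(-1)^{n-1}|H|_p\pmod r$; since $H\in\Lie(p)$ has order divisible by $p$, the integer $|H|_p$ is a positive power of $p$ and is therefore prime to $r$, so $\widetilde{\chi}(\Ap(G))\not\equiv 0\pmod r$, and in particular it is nonzero. I do not anticipate a genuine obstacle here: once Theorem \ref{thm:main} is available the computation is essentially bookkeeping. The two points deserving care are the degenerate cases $\F_f=\emptyset$ or $\F_g=\emptyset$ (covered by Theorems \ref{thm:noFieldCase} and \ref{thm:fieldCase}, with one of the two sums simply empty), and the orbit-counting step --- checking that the $G$-stabiliser of the extra vertex $E$ in the extended complex really is $N_G(E)$, and invoking Proposition \ref{prop:FfFgFc} to know that all elements of $\F_f\cup\F_g$ have order $p$, so that the mod-$r$ reduction applies uniformly.
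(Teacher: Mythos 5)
Your proof is correct and follows essentially the same route as the paper: the paper simply says ``apply Theorem \ref{thm:main}'' and reads off dimensions of the concentrated homology groups, which amounts to exactly the Euler-characteristic additivity over the two-step extended-complex model $\big(\Delta(H)_{\F_f}\big)_{\F_g}$ that you derive from the Mayer--Vietoris sequence of Theorem \ref{thm:MVsequence}(1). Your evaluation of the three pieces (Solomon--Tits for $\widetilde{\chi}(\Delta(H))=(-1)^{n-1}|H|_p$, equality of Lie ranks for $E\in\F_f$ via Lemma \ref{lm:fieldAutomorphismsAndFixedPointsBuilding}, and the link identification for $E\in\F_g$), the orbit-counting with stabiliser $N_G(E)$, and the mod-$r$ reduction all match the intended argument.
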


\begin{proof}
Apply Theorem \ref{thm:main} and then use the fact that $$\widetilde{\chi}(\Ap(C_H(E))) = (-1)^{n-1}|C_H(E)|_p$$ for $E=1$ or $E\in \F_f$ (notice the Lie rank of $O^{p'}(C_H(E))$ always equals the Lie rank of $H$ by Lemma \ref{lm:fieldAutomorphismsAndFixedPointsBuilding}).
\end{proof}

The non-vanishing of the Euler characteristic for the case $\F_g = \emptyset$ and $G$ almost simple follows from the previous work \cite{PS22} and the results given in this article.
Below we extend this conclusion to any group $G$ with a not necessarily simple $\name_p$-subgroup.

\begin{corollary}
\label{coro:eulerFieldCase}
Let $p$ be a prime and $G$ a group containing an $\name_p$-subgroup $H$ and $\F_g = \emptyset$.
Then $\widetilde{\chi}(\Ap(G)) \neq 0$.
\end{corollary}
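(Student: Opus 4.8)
The plan is to reduce to the already-known almost simple case via the uniqueness of the $\name_p$-subgroup and an induction on $|G|$, using the Euler-characteristic formula of Corollary \ref{coro:eulerCharFormula} in the specialised form where $\F_g=\emptyset$. First I would invoke Lemma \ref{lm:uniqueLiep} to let $H$ be the unique $\name_p$-subgroup of $G$, and split according to whether $\F_f=\emptyset$ or not. When $\F_f=\emptyset$, Theorem \ref{thm:fieldCase} gives $\Ap(G)\simeq \Delta(H)$, which is a building of rank $n\geq 1$, so $\widetilde{\chi}(\Ap(G))=(-1)^{n-1}|H|_p\neq 0$; here one should note that $H$ has positive Lie rank because a group in $\Lie(p)$ of Lie rank $0$ is trivial and hence not self-centralising in a group with a nontrivial $p$-part (or more simply, $\F_p$-subgroups by definition have nontrivial order-$p$ elements, so $|H|_p>1$). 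When $\F_f\neq\emptyset$, the formula of Corollary \ref{coro:eulerCharFormula} becomes
\[ \widetilde{\chi}(\Ap(G)) = (-1)^{n-1}\Big( |H|_p - \sum_{\overline{E}\in \F_f/G} \frac{|G|}{|N_G(E)|}\,|C_H(E)|_p\Big),\]
so it suffices to show the integer in parentheses is nonzero.

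Next I would analyse that sum. For $E\in\F_f$, the subgroup $O^{p'}(C_H(E))$ is again a group of Lie type of the same Lie rank $n$ but over a smaller field (Proposition \ref{prop:GLSprops}(1)(a) and (c), and Lemma \ref{lm:fieldAutomorphismsAndFixedPointsBuilding}), so $|C_H(E)|_p = |O^{p'}(C_H(E))|_p$ is a proper divisor of $|H|_p$; in the untwisted case $H={}^1\Sigma(q^p)$ one has $|C_H(E)|_p = |H|_p^{1/p}$ exactly. The term $|G|/|N_G(E)|$ counts the $G$-orbit of $E$. The key point is a $p'$-divisibility control: since $H\leq N_G(E)H$ and by the structure of $\Phi_H$ there is a single $\Inndiag(H)$-class of order-$p$ field automorphisms (again Proposition \ref{prop:GLSprops}(1)), the orbit length $|G|/|N_G(E)|$ divides $|\Inndiag(H)|/|C_{\Inndiag(H)}(E)| \cdot (\text{a }p'\text{-number})$, and one computes that the whole expression $\sum_{\overline E}\frac{|G|}{|N_G(E)|}|C_H(E)|_p$ has strictly smaller $p$-adic valuation than $|H|_p$ (each summand $|C_H(E)|_p$ does, and the orbit multipliers are $p'$-numbers because $N_G(E)\geq C_H(E)$ which already contains a Sylow $p$-subgroup of $C_H(E)$, while $|G:H|$ is $p'$ up to the contribution of field automorphisms which normalise a common Sylow $p$-subgroup). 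Hence the valuations differ and the difference $|H|_p - \sum \cdots$ is nonzero.

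I would then note that the cases where $H$ is not quasisimple (Table \ref{tab:excludedCases}) need separate but trivial handling: there $\F_f=\emptyset$ forces us back to the first case, or, if $\F_f\neq\emptyset$, one checks directly from the small list of such $H$ that the above valuation comparison still goes through (the relevant $H$ with field automorphisms are essentially ${}^2A_2(2)$-type towers, handled as in Example \ref{ex:titsGroup}). The main obstacle I anticipate is making the $p'$-divisibility argument for the orbit sizes $|G|/|N_G(E)|$ fully rigorous across both twisted and untwisted types and across all choices of the overgroup $G$ between $H$ and $\Aut(H)$ modulo $\Outdiag$; the cleanest route is probably to reduce modulo a well-chosen prime $r\neq p$ dividing $|C_H(E)|$ but managing the orbit multiplicities, exactly as in the ``In particular'' clause of Corollary \ref{coro:eulerCharFormula}, rather than tracking exact $p$-adic valuations — but one must verify such an $r$ exists, which ultimately comes down to known facts about $|H|_p$ versus $|C_H(E)|_p$ for groups of Lie type. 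Alternatively, and perhaps more simply, since $\F_g=\emptyset$ the complex $\Bp(\Omega_1(G))$ is spherical of dimension $n$ by Corollary \ref{coro:sphericalBp} (when $\F_f\neq\emptyset$), so $\widetilde{\chi}(\Ap(G))=(-1)^n\dim\widetilde H_n(\Ap(G),\QQ)$, and it remains only to show this homology is nonzero; that follows from Theorem \ref{thm:fieldCase}, which exhibits $\widetilde H_n(\Ap(G_{df}),\ZZ)$ as the kernel of a surjection of Steinberg-type modules whose source has strictly larger rank than its target (again by the field-size drop), so the kernel is nonzero. This last argument avoids congruences entirely and is the one I would write up.
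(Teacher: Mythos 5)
Your second (preferred) route follows the same architecture as the paper's proof: dispose of $\F_f=\emptyset$ via $\Ap(G)\simeq\Delta(H)$, and otherwise use Theorem \ref{thm:fieldCase} to get that $\Ap(G)$ is spherical of dimension $n$, so that $\widetilde{\chi}(\Ap(G))=(-1)^n\dim\widetilde H_n(\Ap(G),\QQ)$ and everything reduces to the non-vanishing of the top homology. (The detour through Corollary \ref{coro:sphericalBp} is unnecessary, and mildly circular since that corollary's proof points back to the present statement for non-triviality; Theorem \ref{thm:fieldCase} alone gives the sphericity you need.) The gap is in the decisive step. You claim the kernel is non-zero because ``the source has strictly larger rank than its target (again by the field-size drop)''. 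But the field-size drop says $|C_H(E)|_p<|H|_p$, i.e.\ each individual summand $\widetilde H_{n-1}(\Ap(C_H(E)),\ZZ)$ has \emph{smaller} rank than the target $\widetilde H_{n-1}(\Ap(H),\ZZ)$; what must carry the inequality is the induction, that is, the orbit multipliers $|G:N_G(E)|$. One has to actually prove
\[\sum_{\overline E\in\F_f/G}|G:N_G(E)|\cdot|C_H(E)|_p\ >\ |H|_p,\]
an orbit-counting estimate (roughly: $|G:N_G(E)|\geq|H:N_H(E)|\geq|H|/((p-1)|C_H(E)|)$, and then $|H|_{p'}>(p-1)|C_H(E)|_{p'}$ for groups of Lie type, plus care with the exceptional $H$ of Table \ref{tab:excludedCases}). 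This is precisely the content the paper does not reprove: it cites \cite[Theorem 3]{AK}, or \cite[Theorem 6.6]{PS22} together with \cite[Lemma 0.12]{AS93}, for the non-vanishing of the degree-$n$ homology. As written, your argument asserts the inequality with a heuristic that, if anything, points the other way.

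Your first route contains an outright error worth flagging even though you discard it: the orbit multipliers $|G|/|N_G(E)|$ are \emph{not} $p'$-numbers. In Example \ref{ex:PSL2} ($H=\PSL_2(q^2)$, $p=2$) one has $|G|/|N_G(\Phi)|=q(q^2+1)$, divisible by $q$; consequently the $p$-adic valuations of $|H|_2=q^2$ and of $\sum_{\overline E}|G:N_G(E)|\,|C_H(E)|_2=q^2(q^2+1)$ are \emph{equal}, and the valuation-comparison argument collapses (the difference is $-q^4\neq0$, but not for the reason you give). The justification ``$N_G(E)\geq C_H(E)$, which already contains a Sylow $p$-subgroup of $C_H(E)$'' does not imply that $N_G(E)$ contains a Sylow $p$-subgroup of $G$.
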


\begin{proof}
If $\F_f = \emptyset$, then by Corollary \ref{coro:eulerCharFormula}, $\widetilde{\chi}(\Ap(G))  = (-1)^{n-1}|H|_p\neq 0$, where $n$ denotes the Lie rank of $H$.
Thus, we may assume that $\F_f \neq\emptyset$.

By Theorem \ref{thm:fieldCase}, $\Ap(G)$ is homotopy equivalent to a wedge of spheres of dimension $n$, where $n$ is the Lie rank of $H$, so
$$\widetilde{\chi}(\Ap(G)) = (-1)^n \dim \widetilde{H}_n(\Ap(G),\QQ).$$
The result then follows by \cite[Theorem 3]{AK}, or else by the argument given below.

Let $\overline{G} = G/Z(H)$ and $\overline{H} = H/Z(H)$.
By Theorem 6.6 from \cite{PS22} we have that $\Ap(\overline{G})$ has non-zero homology in degree $n$ (note that theorem is proved for $\overline{H}$ simple, but a straightforward computation shows that it also holds when $\overline{H}$ is as in the first column of Table \ref{tab:excludedCases}).
Since $\widetilde{H}_*(\Ap(\overline{G}),\QQ) \subseteq \widetilde{H}_*(\Ap(G),\QQ)$ (cf. \cite[Lemma 0.12]{AS93}), we see that $\Ap(G)$ has non-zero homology in degree $n$.
Therefore, $\widetilde{\chi}(\Ap(G)) \neq 0$.
\end{proof}

Alternatively, with the above notation, one can also prove that if $\F_g = \emptyset$, then there is a suitable prime $r\neq 2,3,p$ such that $r\mid |G|/|N_G(E)|$ for $E\in \F_f$ and hence apply Corollary \ref{coro:eulerCharFormula}.
We will follow this approach in some cases where $\F_g \neq\emptyset$.
Note that if $\F_g\neq\emptyset$ then $p=2,3$ and $H$ is one of the groups in the first column of Table \ref{tab:decreasingRanks}.
Indeed, we may take such a prime $r$ to be a Zsigmondy prime.
Recall that Zsigmondy's theorem states that for integers $a>b>0$ and $n\geq 3$, there exists a prime $r$ such that $r\mid a^n-b^n$ and $r\nmid a^m-b^m$ for all $1\leq m<n$, if and only if $(n,a,b)\neq (6,2,1)$.

\begin{proposition}
\label{prop:equicharacteristcEuler}
Let $p$ be a prime and $G$ a group with an $\name_p$-subgroup $H$.
Assume $\F_f=\emptyset$ when $p = 2$ and $H=A_n(4^a)$ ($n\geq 2$) or $E_6(4^a)$.
Then $\widetilde{\chi}(\Ap(G)) \neq 0$.
\end{proposition}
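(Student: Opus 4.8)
The plan is to reduce everything to the Euler-characteristic formula in Corollary \ref{coro:eulerCharFormula} and then exhibit, in each case, a prime $r\neq p$ that divides $|G|/|N_G(E)|$ simultaneously for every $E\in\F_f\cup\F_g$. First I would dispose of the easy cases: if $\F_f=\F_g=\emptyset$ then $\Ap(G)\simeq\Delta(H)$ and $\widetilde{\chi}=\pm|H|_p\neq 0$; if $\F_g=\emptyset$ (and $\F_f$ arbitrary) this is exactly Corollary \ref{coro:eulerFieldCase}. So I may assume $\F_g\neq\emptyset$, which by Proposition \ref{prop:FfFgFc} forces $p\in\{2,3\}$ and $H$ (more precisely $[H,H]$ modulo centre, or $H$ itself if quasisimple) to be one of the untwisted types listed in Table \ref{tab:decreasingRanks}: $B_2(2^{2a+1})$, $F_4(2^{2a+1})$, $A_{n-1}(2^a)$, $D_n(2^a)$, $D_4(3^a)$, $E_6(2^a)$. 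I would also split off the further degenerate possibility $\F_f=\emptyset$, $\F_g\neq\emptyset$: then by Theorem \ref{thm:noFieldCase} the homology of $\Ap(G)$ splits as $\widetilde{H}_*(\Ap(H))$ plus induced copies of $\widetilde{H}_*(\Ap(C_H(E)))$, the building terms live in different degrees, no cancellation is possible with the degree-$(n-1)$ building class of $H$, and $\widetilde{\chi}(\Ap(G))\neq 0$ follows directly (alternatively $\widetilde{\chi}(\Ap(H))=\pm|H|_p\neq 0$ already detects it, since the summands sit in strictly smaller degrees).

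So the real content is the case $\F_f,\F_g$ both non-empty, with $H$ from the short list above. Here I would use the Zsigmondy prime. Recall that for $E\in\F_f$ a field automorphism, Proposition \ref{prop:GLSprops}(1) gives $O^{p'}(C_H(E))={}^d\Sigma(q^{1/p})$ of the same rank as $H$, and $N_G(E)$ is controlled by $C_{\Inndiag(H)}(E)=\Inndiag(C)$ times the (bounded) outer part. The index $|G|/|N_G(E)|$ is then, up to the bounded factor $|G:\Inndiag(H)|$, essentially $|\Inndiag(H)|/|\Inndiag(C_H(E))|$, a ratio of orders of the same type over fields $q$ and $q^{1/p}$; similarly for $E\in\F_g$ the centraliser is the smaller-rank group from Table \ref{tab:decreasingRanks}. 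In every such ratio the group order contains cyclotomic factors $\Phi_k(q)$ (or $\Phi_k(2)$, etc.) coming from the ambient type that do not appear in the centraliser's order, and Zsigmondy's theorem supplies a prime $r$ dividing the relevant $q^k-1$ (or $2^{k}-1$) but nothing smaller, hence $r\mid|G|/|N_G(E)|$ for all the finitely many types of $E$ at once. One then checks $r\neq p$ (automatic, $r\equiv 1\bmod k$) and $r\nmid|G:\mathrm{q}(G)|$-type correction factors — these are divisors of bounded outer-automorphism orders, $2,3,$ or graph factors, so choosing $k$ large enough relative to the rank makes $r$ coprime to them. Then Corollary \ref{coro:eulerCharFormula} gives $\widetilde{\chi}(\Ap(G))\equiv(-1)^n|H|_p\not\equiv 0\pmod r$.

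The main obstacle is precisely the cases the statement excludes: $p=2$ with $H=A_n(4^a)$ or $E_6(4^a)$ and both $\F_f,\F_g$ non-empty. There the relevant cyclotomic value one wants a Zsigmondy prime for is $\Phi_6(2)=3$ (the Zsigmondy-exceptional triple $(6,2,1)$, or its shifts for $A_n$), so no primitive prime divisor beyond $3$ is guaranteed, and $3$ may well divide the bounded correction factors; the clean congruence argument breaks down exactly here, which is why these are left as open cases (to be handled by the more delicate homological computations referenced after Corollary \ref{coro:euler}). A secondary technical point is handling the non-quasisimple $H$ from Table \ref{tab:excludedCases}: but $\F_g\neq\emptyset$ already forces $H$ quasisimple (those exceptional small groups have no graph automorphisms of order $p$ — one reads this off Table \ref{tab:excludedCases}, where $\Out(H)$ is either trivial, diagonal, or a field group), so that subtlety does not actually arise in the $\F_g\neq\emptyset$ branch. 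I would also need to be mildly careful that the Zsigmondy prime chosen works uniformly across the field-degree parameter $a$; since $q=2^a$ (or $3^a$) and the cyclotomic factor is $\Phi_k(q)$ with $k$ depending only on the type, this is fine for $k\geq 3$, and the finitely many small exceptions (rank too small for a large enough $k$, e.g. $B_2$, $D_4$, ${}^3D_4$, small $A_{n-1}$) can be settled by exhibiting an explicit Zsigmondy prime by hand or a short direct computation.
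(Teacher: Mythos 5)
Your overall strategy is the paper's: reduce to Corollary \ref{coro:eulerCharFormula} and exhibit a prime $r\neq 2,3$ dividing $|G|/|N_G(E)|$ simultaneously for every $E\in\F_f\cup\F_g$, with Zsigmondy's theorem supplying $r$ type by type; your third paragraph is essentially Cases 1--4 of the paper's proof, and your identification of the excluded cases with the Zsigmondy exception $(6,2,1)$ is the right heuristic.

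There is, however, a genuine gap in how you dispatch the branch $\F_f=\emptyset$, $\F_g\neq\emptyset$. You assert that because the homology splits into the building class in degree $n-1$ plus induced summands in degrees $m_E$, ``no cancellation is possible'' and $\widetilde{\chi}(\Ap(G))\neq 0$ follows directly. This is false: the Euler characteristic is an alternating sum, so nonzero summands sitting in degrees of opposite parity cancel against each other (as for $S^1\vee S^2$, whose reduced Euler characteristic is $0$). The paper uses the direct argument only when $n-1\equiv m_E\pmod 2$ for all $E\in\F_g$, which by Table \ref{tab:decreasingRanks} covers only some types; for the rest it must still run the congruence argument. The clearest casualty is $H=F_4(2^{2a+1})$, where $\F_f$ is \emph{automatically} empty (the field degree is odd) and $n-1=3$, $m_E=2$ have opposite parity, so the building term and the ${}^2F_4$ terms enter $\widetilde{\chi}$ with opposite signs and could a priori cancel; the same issue arises for $E_6(2^a)$ and $D_4(3^a)$ without order-$p$ field automorphisms and for $A_n(2^a)$ of the wrong parity. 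Since you restrict your Zsigmondy argument to the case $\F_f\neq\emptyset\neq\F_g$, these cases are left uncovered. The fix is simply to apply the same Zsigmondy/congruence argument whenever the parities do not all agree, irrespective of whether $\F_f$ is empty.

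A secondary error: $\F_g\neq\emptyset$ does not force $H$ quasisimple. Table \ref{tab:excludedCases} explicitly lists $\Out(\PSp_4(2))=\Gamma_H\cong C_2$, a \emph{graph} automorphism of order $2$, and this is realised, e.g., by $H=\PSp_4(2)$ inside $G=\Aut(\Alt_6)$. This happens to be harmless here because $\PSp_4(2)=B_2(2^{2a+1})$ with $a=0$ falls under the generic $B_2$ case (where the parities do agree), but the justification you give for ignoring the non-quasisimple groups is a misreading of the table.
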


\begin{proof}
Denote the Lie rank of $H$ by $n$.
We may assume that $\F_g \neq\emptyset$, otherwise the result follows from Corollary \ref{coro:eulerFieldCase}.
Also note that $p=2$ or $3$, and the latter occurs only if $H\cong D_4(3^a)$.

For $E\in \F_f\cup \F_g \cup \{1\}$, let $m_E$ be the Lie rank of $O^{p'}(C_H(E))$.
Then we have
\[  \widetilde{\chi}(\Ap(C_H(E))) = (-1)^{m_E-1} |C_H(E)|_p.\]
Thus $\widetilde{\chi}(\Ap(G))\neq 0$ if, for example, $\F_f = \emptyset$ and $n-1, m_E$ have the same parity for all $E\in \F_g$ (see Theorem \ref{thm:noFieldCase}).
By Table \ref{tab:decreasingRanks}, this is the case if $p = 2$, $\F_f = \emptyset$, and
\begin{equation}
    \label{eq:goodCases}
    H = \PSp_4(2^{2a+1}), \,\, D_n(2^a) \, (n\geq 4), \,\, A_{n}(2^a) \, (n\geq 2 \text{ and } n\equiv 0,1 \text{ (mod $4$)}).
\end{equation}
In case $H = D_n(2^a)$, $n\geq 4$, and $\F_f\neq\emptyset$, we have $\widetilde{\chi}(\Ap(G))\neq 0$ by Example \ref{ex:DnChar2}.

To establish the non-vanishing of the Euler characteristic in the remaining cases, namely $H$ is not as in Eq. (\ref{eq:goodCases}), and further $\F_f = \emptyset$ if $p = 2$ and $H = A_n(4^a)$ or $E_6(4^a)$, we invoke Corollary \ref{coro:eulerCharFormula} and show that there is a prime $r$ such that
\begin{equation}
\label{eq:zsigmondyCondition}
r \neq 2,3,p \quad \text{and} \quad r\mid |G|/|N_G(E)| \text{ for all }E\in \F_f\cup \F_g.
\end{equation}

We split the proof into cases according to the possibilities for $H$ as in the first column of Table \ref{tab:decreasingRanks}.
Let $E \in \F_g \cup \F_f$ or $E = 1$, and set $C_E := O^{p'}(C_{H}(E))$.
We will describe the order of $N_G(E)$ based on Table 2.2 of \cite{GLS98}.
In any case, $C_E\in \Lie(p)$, the order of $Z(C_E)$ is either a power of $2$, a divisor of $(n,q-1)$ if $C_E=A_m(q)$, or a divisor of $(n,q+1)$ if $C_E={}^2A_m(q)$.
If $z_u$ denotes the order of the centre of the universal version of $C_E$, then $|Z(C_E)|$ divides $z_u$, and we write $\mathrm{\bf z}_E:=z_u / |Z(C_E)|$.
Then we have
\begin{equation}
	|C_E| = |C_E|_p \cdot \frac{1}{\mathrm{\bf z}_E} \cdot \prod_{i\in I_E} (q_E^i - \epsilon_{E,i}),
\end{equation}
where $q_E$ is the defining field if we write $C_E = {}^{d_E}\Sigma(q_E)$, as explained in the fifth column of Table \ref{tab:decreasingRanks}, and the set $I_E$ of powers and the $\epsilon_{E,i}$ are as in the third column of Table 2.2 of \cite{GLS98}.
Moreover, $C_E$ is self-centralising and normal in $N_G(E)$.
Hence $N_G(E)/Z(C_E)$ embeds into $\Aut(C_E)$ via the quotient map
$\mathrm{\bf q}_E:N_G(E) \to \Aut(C_E)$.
We further write
\[ \mathrm{\bf d}_E := |\mathrm{\bf q}_E^{-1}(\Inndiag(C_E))|/|C_E|, \quad \text{ and } \quad \mathrm{\bf fg}_E := |N_G(E)|/(|C_E|\cdot \mathrm{\bf d}_E). \]
When $E\in \F_g\cup \F_f$ we have $N_G(E) = C_G(E).L$ with $L$ a subgroup of the cyclic group of order $p-1$, and then $\mathrm{\bf a}_E=|L|$ (so $\mathrm{\bf a}_E = 1$ or $2$).
We set $\mathrm{\bf a}_1 := 1$.

With this notation at hand, we can write
\begin{equation}
\label{eq:orderDecomposition}
|N_G(E)| = |C_E|_p \cdot \frac{1}{\mathrm{\bf z}_E} \cdot \mathrm{\bf d}_E \cdot \mathrm{\bf fg}_E \cdot \mathrm{\bf a}_E \cdot \prod_{i\in I_E} (q_E^i - \epsilon_{E,i}).
\end{equation}

We have that $\mathrm{\bf fg}_E$ divides $\mathrm{\bf fg}_1$, and also $y_E := \prod_{i\in I_E} (q_E^i - \epsilon_{E,i})$ divides $y_1 := \prod_{i\in I_1} (q_1^i - \epsilon_{1,i})$.
Finally, the terms $\mathrm{\bf z}_E$ are natural numbers which might only involve the primes $2$, $3$, and primes dividing $q_E-1$ (in the linear case), or primes dividing $q_E+1$ (in the unitary case).
We will take a prime $r$ dividing $y_1 / y_E$, and prime to $2$, $3$, $\mathrm{\bf d}_E$ and $\mathrm{\bf z}_1$.
\bigskip

\noindent
\textbf{Case 1.} If $H \cong D_4(3^a)$ and $p=3$, then there exists a prime $r$ satisfying Eq. (\ref{eq:zsigmondyCondition}).
\begin{proof}
Let $q = 3^a$ and $q' = q^{1/3}$ (if $3\mid a$).
Then
\[ y_1 = (q^4-1)(q^2-1)(q^4-1)(q^6-1),\]
\[ \mathrm{\bf z}_1 = 1, 2 \text{ or } 4,\] 
and
\[ y_E = \begin{cases}
(q'^4-1)(q'^2-1)(q'^4-1)(q'^6-1) & E\in \F_f,\\
(q^6-1)(q^2-1) & E\in \F_g, C_E \cong G_2(q),\\
(q'^8+q'^4+1)(q'^6-1)(q'^2-1) & E\in \F_g, C_E \cong {}^3D_4(q').
\end{cases}\]
For $E\in \F_f\cup \F_g$ we have
\[ y_1 / y_E = \begin{cases}
(q'^8+q'^4+1)^2 (q'^4+q'^2+1)(q'^{12}+q'^6+1) & E\in \F_f, \\
(q^4-1)^2 & E\in \F_g, C_E \cong G_2(q),\\
(q'^{2}+1)(q^{4}-1)(q^{6}-1) & E\in \F_g, C_E \cong {}^3D_4(q').
\end{cases}\]
On the other hand, $\mathrm{\bf d}_E = 1, 2, 4$ for any $E\in \F_f\cup \F_f$.
If $q'$ is not defined, so only the case of $G_2(q)$ centralisers arises, we can take $r$ to be any odd prime dividing $q^4-1$.
When $q'$ is defined, we note that $(q'^8+q'^4+1)(q'^4-1) = q^4-1$, and so $q'^8+q'^4+1$ divides $y_1/y_E$ for all $E\in \F_f\cup \F_g$.
Thus we can take any prime $r$ dividing $q'^8+q'^4+1$ (and since the latter is odd we see that $r\neq 2,3$).
In any case, we have $r\neq 2,3$ and $r\mid |G|/|N_G(E)|$ for any $E\in \F_f\cup \F_g$, i.e., $r$ satisfies Eq. (\ref{eq:zsigmondyCondition}).
\end{proof}

\noindent
\textbf{Case 2.} If $H = F_4(2^{2a+1})$ and $p=2$, then there exists a prime $r$ satisfying Eq. (\ref{eq:zsigmondyCondition}).
\begin{proof}
Here $\F_f = \emptyset$ and $\mathrm{\bf d}_E = 1 = \mathrm{\bf z}_E$ for any $E\in \F_g\cup \{1\}$.
Also, if $q = 2^{2a+1}$ and $E\in \F_g$, we have
\[ y_1/y_E = (q^{6}-1)(q^4+1)(q^3-1)(q+1).\]
Hence we can take any prime $r$ dividing $q^3-1$ (and $r\neq 2,3$ since $q$ is an odd power of $2$).
\end{proof}

\noindent
\textbf{Case 3.} If $H = E_6(2^a)$ and $p=2$, then there exists a prime $r$ satisfying Eq. (\ref{eq:zsigmondyCondition}).
\begin{proof}
Let $q = 2^{a}$ and $q' = q^{1/2}$ (if $2\mid a$).
Then $\mathrm{\bf z}_1 = 1$ or $3$, and
\[ y_1 = (q^{12}-1)(q^9-1)(q^8-1)(q^6-1)(q^5-1)(q^2-1).\]
By hypothesis we assume $\F_f = \emptyset$ if $q'$ is defined, so we only consider $E\in \F_g$, and hence
\[ y_E = \begin{cases}
(q^{12}-1)(q^8-1)(q^6-1)(q^2-1) & C_E \cong  F_4(q),\\
(q'^{12}-1)(q'^9+1)(q'^8-1)(q'^6-1)(q'^5+1)(q'^2-1) & C_E \cong {}^2E_6(q').
\end{cases}\]
Note that $|\Outdiag(F_4(q))|=1$ and $|\Outdiag({}^2E_6(q'))|=(3,q'+1)$.
Now,
\[ y_1 / y_E = \begin{cases}
(q^9-1)(q^5-1) & C_E \cong F_4(q),\\
(q^{6}+1)(q'^9-1)(q^4+1)(q^3+1)(q'^5-1)(q+1) & C_E \cong {}^2E_6(q').
\end{cases}\]
In the case $q'$ is defined, we take then a Zsigmondy prime $r$ for $q'^9-1$, and since $3\mid q'^2-1$ we have $r\neq 2,3$.
If $q'$ is not defined, so only the case $F_4(q)$ occurs, we take a Zsigmondy prime for $q^9-1$, and again $r\neq 2,3$.
\end{proof}

\noindent
\textbf{Case 4.} Let $H = A_{n}(2^a)$ and $p=2$, with $n\geq 3$.
Then there exists a prime $r$ satisfying Eq. (\ref{eq:zsigmondyCondition}).

\begin{proof}
Let $q = 2^a$, and $q' = q^{1/2}$ if $2\mid a$.
Again, in the case $q'$ is defined we are assuming $\F_f = \emptyset$.
Here $\mathrm{\bf z}_1$ is a divisor of $(n+1,q-1)$.
We have two types of automorphisms $E\in \F_g$, namely those with symplectic centraliser and those with unitary centraliser.
In the symplectic case, $|\Outdiag(\PSp_{2[(n+1)/2]}(q))| = 1$.
On the other hand, for the unitary case, we have $|\Outdiag(\PSU_{n+1}(q'))| = (n+1,q'+1)$.
This implies that $\mathrm{\bf d}_1$ divides $q-1$, and $\mathrm{\bf d}_E$ divides $q'+1$ for $E\in \F_g$.
Thus we will take a prime $r\neq 2,3$ such that in addition $r\nmid q-1$ (and hence $r\nmid q'+1$).
Here
\[ y_1 = \prod_{i=2}^{n+1}(q^i-1)\]
and
\[ y_E = \begin{cases}
\prod_{i=1}^{[(n+1)/2]}(q^{2i}-1) & C_E \cong C_{[(n+1)/2]}(q),\\
\prod_{i=2}^{n+1} (q'^i - (-1)^i) & C_E \cong {}^2A_{n}(q').
\end{cases}\]
Then
\[
y_1/y_E = \begin{cases}
 \displaystyle{ (q^3-1) \prod_{i=5 \tq i \text{ odd}}^{n+1} (q^i-1)} & C_E \cong C_{[(n+1)/2]}(q),\\
 \displaystyle{\prod_{i=2}^{n+1} \frac{(q^i-1)}{(q'^i-(-1)^i)} = \prod_{i=2}^{n+1} (q'^i+(-1)^i)} & C_E \cong {}^2A_{n}(q').     
\end{cases}
\]
If $q'$ is defined, we take a Zsigmondy prime $r\mid q'^3-1$,
so $r\nmid q'^2-1=q-1$ and $r\neq 2,3$.
In the case $q'$ is not defined, we take any prime $r\mid q^3-1$ and $r\nmid q^i-1$ for $i=1,2$, and again $r\neq 2,3$.
\end{proof}
We have treated all the cases in the first column of Table \ref{tab:decreasingRanks}, concluding the proof of this proposition.
\end{proof}

From these computations and Theorem \ref{thm:main}, we conclude:

\begin{corollary}
\label{coro:euler}
Let $p$ be a prime and $G$ a group that contains an $\name_r$-subgroup $H$, for some prime $r$.
If $H= A_{n}(4^{a})$ ($n\geq 2$) or $E_6(4^{a})$, with $p=r=2$, assume further that either $\F_f$ or $\F_g$ is empty.
Then $\widetilde{\chi}(\Ap(G)) \neq 0$.
\end{corollary}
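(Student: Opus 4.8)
The plan is to assemble the statement from three results already established: Lemma~\ref{lm:crossCharacteristic} handles the cross-characteristic case $r\neq p$, while Corollary~\ref{coro:eulerFieldCase} and Proposition~\ref{prop:equicharacteristcEuler} together handle the equicharacteristic case $r=p$. First I would observe that we may assume $O_p(G)=1$, since otherwise $\Ap(G)$ is contractible by Proposition~\ref{prop:contractibleAp} and the asserted non-vanishing cannot hold; this assumption is automatic when $r=p$ --- indeed $[O_p(G),H]\le O_p(G)\cap H\le O_p(H)=1$ because $H\in\Lie(p)$ has trivial $p$-core, whence $O_p(G)\le C_G(H)=Z(H)$, a $p'$-group, so $O_p(G)=1$ --- and is exactly the hypothesis relevant to Lemma~\ref{lm:crossCharacteristic} when $r\neq p$.

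If $r\neq p$, then $H$ is an $\name_r$-subgroup of $G$ with $p\neq r$ and $O_p(G)=1$, so Lemma~\ref{lm:crossCharacteristic} applies directly and gives $\widetilde{\chi}(\Ap(G))\neq 0$; no restriction on $\F_f$, $\F_g$ is needed here, consistent with the statement, since the excluded configurations occur only for $p=r=2$. So suppose $r=p$, so that $H$ is an $\name_p$-subgroup of $G$. If $\F_g=\emptyset$, then $\widetilde{\chi}(\Ap(G))\neq 0$ by Corollary~\ref{coro:eulerFieldCase}; this already disposes of the exceptional groups $H=A_n(4^a)$ ($n\ge 2$) or $E_6(4^a)$ in the subcase where $\F_g$ is empty. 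If instead $\F_g\neq\emptyset$, I would apply Proposition~\ref{prop:equicharacteristcEuler}: its only extra requirement is that $\F_f=\emptyset$ when $p=2$ and $H=A_n(4^a)$ ($n\ge 2$) or $E_6(4^a)$, and in exactly those cases the hypothesis of the corollary forces $\F_f$ or $\F_g$ to be empty, so from $\F_g\neq\emptyset$ we get $\F_f=\emptyset$ and the proposition applies, yielding $\widetilde{\chi}(\Ap(G))\neq 0$. This exhausts all cases.

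All the analytic content is contained in Lemma~\ref{lm:crossCharacteristic}, Corollary~\ref{coro:eulerFieldCase}, and Proposition~\ref{prop:equicharacteristcEuler}; for the corollary itself the only point requiring care is the bookkeeping, namely checking that in the excluded cases the assumption that $\F_f$ or $\F_g$ is empty is precisely what lets one route the argument through Corollary~\ref{coro:eulerFieldCase} when $\F_g=\emptyset$ and through Proposition~\ref{prop:equicharacteristcEuler} when $\F_f=\emptyset$, with no overlooked configuration, and that the cross-characteristic case contributes no further exceptions. I do not anticipate any genuine obstacle beyond making this case split watertight and recalling why $O_p(G)=1$ may be assumed.
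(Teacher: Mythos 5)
Your proposal is correct and follows exactly the paper's route: the corollary is obtained by combining Lemma~\ref{lm:crossCharacteristic} for the cross-characteristic case $r\neq p$ with Corollary~\ref{coro:eulerFieldCase} (when $\F_g=\emptyset$) and Proposition~\ref{prop:equicharacteristcEuler} (when $\F_g\neq\emptyset$, using the exceptional hypothesis to force $\F_f=\emptyset$ for $A_n(4^a)$ and $E_6(4^a)$) in the equicharacteristic case. Your point about $O_p(G)=1$ is well taken: that hypothesis is needed for $r\neq p$ (e.g.\ $G=H=\SL_2(5)$ with $p=2$ has $O_2(G)=Z(H)\neq 1$ and contractible $\A_2(G)$), it appears in the abstract and in Lemma~\ref{lm:crossCharacteristic} but is omitted from the corollary's statement, and your verification that it is automatic when $r=p$ is exactly right.
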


In particular, if $G$ is an almost simple group such that $F^*(G)$ is not a sporadic group or an alternating group, then $\widetilde{\chi}(\Ap(G)) \neq 0$, except possibly for the cases excluded in Corollary \ref{coro:euler}.
Note that the case of the Tits group is covered in Example \ref{ex:titsGroup}.

Assume now that $F^*(G)$ is a sporadic simple group.
For $p$ odd, we have $\Ap(G) = \Ap(F^*(G))$.
Hence, if the $p$-rank of $F^*(G)$ is at most $2$, then $\Ap(F^*(G))$ is a graph and $\widetilde{\chi}(\Ap(G)) \neq 0$.
By Table 5.6.1 of \cite{GLS98}, we see then that, for $p\geq 5$, we have $\widetilde{\chi}(\Ap(G))\neq 0$ except possibly for $G=Co_1$ ($p=5$), $Ly$ ($p=5$), $F_5$ ($p=5$), $F_2$ ($p=5$) and $F_1$ ($p=5,7$), where the $p$-rank is at least $3$.

On the other hand, to show that $\widetilde{\chi}(\Ap(G))\neq 0$ one can argue similarly as in the proof of Proposition \ref{prop:equicharacteristcEuler} and exhibit a suitable prime $r$ that divides the index of the normaliser of any element of $\Ap(G)$ (or of any element of $\Bp(G)$).

To be more precise, suppose $X$ is a finite $G$-poset.
Then a standard counting argument with the chains of a poset shows that
\begin{equation}
\label{eq:eulerCharOrbits}
\widetilde{\chi}(X) = -1 - \sum_{x\in X} \widetilde{\chi}(X_{<x}) = -1 - \sum_{\overline{x}\in X/G} |G:\Stab_G(x)| \cdot \widetilde{\chi}(X_{<x}),
\end{equation}
where $\overline{x}$ is the $G$-orbit of $x$.
Thus, if $k := \gcd\big( |G:\Stab_G(x)| \tq \overline{x}\in X/G \big) \neq 1$, we have that $\widetilde{\chi}(X) \equiv -1\neq 0 \pmod{k}$.
Now, if $X = \Bp(G)$ for a sporadic simple group $G$, then $|G:\Stab_G(R)|$ is the index of the normaliser of a radical $p$-subgroup $R\in \Bp(G)$.
To show that $k\neq 1$ one may employ, for example, the classification of radical $p$-subgroups and their normalisers, which was concluded in 2005 by Yoshiara (see \cite{Y1, Y2}).
For instance, if $p = 7$ and $G = F^*(G) = F_1$ is the Monster sporadic group, then the order of the normalisers of radical $7$-subgroups can be read off from Theorem 7 of \cite{Y1}, and it is not hard to check that these normalisers cannot contain a Sylow $2$-subgroup.
Therefore, $r = 2$ divides the indices of these normalisers, i.e., $r\mid k$ above, and this shows that $\widetilde{\chi}(\B_7(F_1)) \neq 0$ by using Eq. (\ref{eq:eulerCharOrbits}) with $X = \B_7(G)$.

The previous approach shows that $\Ap(G)^Q$ is empty for a suitable Sylow $r$-subgroup with $r\neq p$.
One can also check this by looking at the local ranks:
After fixing the prime $p$, if there exists a prime $r$ such that the $p$-local $r$-rank of $G$ is strictly less than the $r$-rank of $G$, then a $p$-local subgroup cannot contain a Sylow $r$-subgroup of $G$, so $\Ap(G)^Q$ is empty for $Q$ a Sylow $r$-subgroup.
For example, this is the case for $p = 2$ and $r = 3$ or $5$ in many of the sporadic simple groups (cf. Tables 5.6.1 and 5.6.2 of \cite{GLS98}).

On the other hand, the argument given in \cite{AK} already shows that $\widetilde{\chi}(\Ap(G))\neq 0$ if $p\geq 5$ and $F^*(G) = \Alt_n$ for $n\geq 7$.
Also $\widetilde{\chi}(\Ap(G))\neq 0$ if $F^*(G) = \Alt_n$ with $n=5$ or $6$ since $\Alt_5 = \PSL_2(5)$ and $\Alt_6 = \PSL_2(9)$ (cf. Example \ref{ex:alt6case}).

By the preceding discussion, Corollary \ref{coro:euler} and the Classification, we conclude:

\begin{corollary}
\label{coro:eulerAlmostSimple}
If $G$ is an almost simple group then $\widetilde{\chi}(\Ap(G)) \neq 0$, except possibly in the following cases:
\begin{enumerate}
    \item $F^*(G)$ is a sporadic simple group, $p\leq 5$ and the $p$-rank of $F^*(G)$ is at least $3$;
    \item $F^*(G)\groupiso \Alt_n$ is an alternating group and $p\leq 3$;
    \item $F^*(G)\cong A_n(4^a)$ ($n\geq 2$) or $E_6(4^a)$, $p = 2$, and $\F_f$ are $\F_g$ non-empty.
\end{enumerate}
\end{corollary}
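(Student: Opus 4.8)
The plan is to reduce the statement, via the classification of finite simple groups, to three families for the socle $S := F^*(G)$, and in each family to quote the appropriate result assembled in this section and its predecessors; the cases excluded in the statement are exactly those the present tools do not reach. First I would record that for an almost simple $G$ one has $O_p(G) = 1$: indeed $O_p(G)$ is normal in $G$, so $[O_p(G),S]\leq O_p(G)\cap S = 1$ because $S$ has trivial $p$-core, whence $O_p(G)\leq C_G(S) = Z(S) = 1$. Thus all earlier results apply, and by the classification $S$ is a simple group of Lie type (possibly the Tits group), an alternating group $\Alt_n$, or a sporadic group.

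For $S$ of Lie type in defining characteristic $r$ I would note that $S$, being simple, is quasisimple and self-centralising normal in $G$, hence an $\name_r$-subgroup of $G$; then Corollary \ref{coro:euler} gives $\widetilde{\chi}(\Ap(G)) \neq 0$ outside exactly the configuration $p = r = 2$, $S \cong A_n(4^a)$ ($n\geq 2$) or $E_6(4^a)$, with $\F_f$ and $\F_g$ both non-empty, which is item (3). The handful of simple groups of Lie type not literally of the form $\GG^F$ are small and coincide with small alternating groups or involve the Tits group; those are covered by Examples \ref{ex:titsGroup} and \ref{ex:alt6case}. For $S \cong \Alt_n$: the cases $n = 5,6$ follow from Example \ref{ex:alt6case} (using $\Alt_5 \cong \PSL_2(5)$, $\Alt_6 \cong \PSL_2(9)$) together with the trivial observation that $\Ap(G) = \emptyset$ when $p\nmid |G|$; for $n\geq 7$ and $p\geq 5$ one cites \cite{AK}; and $n\geq 7$ with $p\leq 3$ is item (2).

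The bulk of the remaining work, and the part I expect to be most delicate, is the sporadic case, which also produces item (1). For $p$ odd, $\Out(S)$ is a $2$-group, so $\Omega_1(G) = S$ and $\Ap(G) = \Ap(S)$; when the $p$-rank of $S$ is at most $2$ the complex $\Ap(S)$ is a non-contractible graph and hence $\widetilde{\chi}(\Ap(G)) \neq 0$, and by Table 5.6.1 of \cite{GLS98} this settles every sporadic $S$ with $p\geq 7$ except $S = F_1$ with $p = 7$. In the surviving high-rank cases I would use the chain-counting identity for a finite $G$-poset $X$,
\[ \widetilde{\chi}(X) = -1 - \sum_{\overline{x} \in X/G} |G : \Stab_G(x)| \cdot \widetilde{\chi}(X_{<x}), \]
applied to $X = \Bp(G) \simeq_G \Ap(G)$: it suffices to find a prime $r \neq p$ dividing $|G : N_G(R)|$ for every radical $p$-subgroup $R$, equivalently such that no $p$-local subgroup of $G$ contains a Sylow $r$-subgroup. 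This can often be read off from the local rank tables of \cite{GLS98} (e.g.\ $r = 3$ or $5$ for many sporadics at $p = 2$), and for $F_1$ at $p = 7$ from Yoshiara's classification of the radical $7$-subgroups and their normalisers \cite{Y1}, where $r = 2$ works. The sporadic groups with $p\leq 5$ and $p$-rank $\geq 3$ that are not reached by such an auxiliary prime are precisely the residue recorded as item (1). Assembling the three cases with Corollary \ref{coro:euler} and the classification yields the statement; the main obstacle is organising the sporadic bookkeeping and, in the genuinely high-rank instances, extracting the prime $r$ from the classification of radical $p$-subgroups and their normalisers.
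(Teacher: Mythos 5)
Your proposal is correct and follows essentially the same route as the paper: reduce via the classification to the Lie-type, alternating and sporadic cases; apply Corollary \ref{coro:euler} for Lie type (with the Tits group handled by Example \ref{ex:titsGroup}), \cite{AK} and the identifications $\Alt_5\cong\PSL_2(5)$, $\Alt_6\cong\PSL_2(9)$ for the alternating groups; and for the sporadic groups combine the $p$-rank $\leq 2$ observation with the chain-counting identity and an auxiliary prime $r$ dividing all indices $|G:N_G(R)|$ (read off from the local-rank tables of \cite{GLS98}, and from Yoshiara's classification for $F_1$ at $p=7$). The residual cases match items (1)--(3) exactly, so the argument coincides with the paper's.
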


%

\appendix

\section{Homotopy theorems}
\label{sec:appendix}

In this appendix, we gather some basic tools to study the homotopy type of spaces and the behaviour of continuous maps.
Throughout this section, $G$ will denote a (not necessarily finite) group.

\begin{lemma}
\label{lm:nullHomotopicDimension}
Let $X,Y$ be CW-complexes.
Assume that $Y$ is $k$-connected for some $k\geq 0$ and that $X$ is homotopy equivalent to a CW-complex of dimension $\leq k$.
Then any continuous map $X\to Y$ is homotopic to a constant map.
\end{lemma}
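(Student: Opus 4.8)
The plan is to show that, after replacing $X$ by a homotopy-equivalent CW-complex, the given map extends over the cone on $X$, and such an extension forces it to be null-homotopic.

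First I would record the elementary fact that being null-homotopic is unchanged under precomposition with a homotopy equivalence: if $h\colon W\to X$ is a homotopy equivalence with homotopy inverse $g$ and $f\colon X\to Y$ is any map, then $f$ is null-homotopic if and only if $f\circ h$ is, since $f = f\circ(h\circ g)\cdot$ up to homotopy $= (f\circ h)\circ g$ up to homotopy. As $X$ is homotopy equivalent to a CW-complex of dimension $\le k$, this reduces the statement to the case in which $X$ itself is a CW-complex with $\dim X\le k$; if $X=\emptyset$ there is nothing to prove. Note also that $Y$, being $k$-connected with $k\ge 0$, is in particular path-connected; fix a point $y_0\in Y$.

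Next I would form the cone $CX=(X\times[0,1])/(X\times\{1\})$ with its standard CW structure, in which $X=X\times\{0\}$ is a subcomplex and the cells of $CX$ not lying in $X$ are the cone point together with one cell of dimension $n+1$ for each $n$-cell of $X$; in particular every relative cell has dimension at most $k+1$. Given $f\colon X\to Y$ I would extend it to $\tilde f\colon CX\to Y$ by the usual cellular induction over the relative skeleta: send the cone point to $y_0$, and, having extended over the relative $(j-1)$-skeleton, extend over each relative $j$-cell. The obstruction to extending the partially defined map over a relative $j$-cell with $j\ge 1$ is the homotopy class in $\pi_{j-1}(Y)$ of the composite of that cell's attaching map with the map built so far; since $j-1\le k$ and $Y$ is $k$-connected this class is trivial, so the extension always exists, and as there are no relative cells of dimension exceeding $k+1$ the extension $\tilde f$ exists on all of $CX$.

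Finally, $CX$ is contractible, so the inclusion $X\hookrightarrow CX$ is homotopic to the constant map at the cone point; hence $f=\tilde f|_X$ is homotopic to the constant map at $\tilde f(\text{cone point})$, which is what we want. The argument has no genuinely hard step: the only points requiring care are the bookkeeping of the CW structure on the cone and the precise invocation of the standard obstruction-theoretic fact that the obstruction to extending a map over a $j$-cell lives in $\pi_{j-1}$ of the target, so I would state these carefully. An alternative route, which I would mention, is to replace $Y$ by a weakly equivalent CW-complex whose $k$-skeleton is a single point (possible since $Y$ is $k$-connected), note that this does not change $[X,Y]$ for the CW-complex $X$, and then apply cellular approximation directly: any cellular map from $X$ (of dimension $\le k$) lands in the $k$-skeleton, hence is constant.
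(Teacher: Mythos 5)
Your proof is correct. The opening reduction is exactly the paper's: both arguments first replace $X$ by a homotopy-equivalent CW-complex $Z$ of dimension at most $k$, using that null-homotopy is preserved under precomposition with a homotopy equivalence. Where you diverge is in how the key step is carried out. The paper finishes in two lines by observing that $\{*\}\hookrightarrow Y$ is a $k$-equivalence and citing the Whitehead-type theorem (May, Ch.~10, \S 3) that such a map induces a surjection $[Z,\{*\}]\to[Z,Y]$ when $\dim Z = k$; since the source has one element, every map $Z\to Y$ is null-homotopic. You instead prove that statement by hand: you extend $f$ over the cone $CZ$ cell by cell, killing each obstruction in $\pi_{j-1}(Y)$ for $j-1\le k$ by $k$-connectivity, and then use contractibility of the cone. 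This is essentially the HELP-style argument that underlies the cited theorem, so your route is a self-contained unwinding of the paper's black box; it costs a little bookkeeping (the CW structure on the cone, and the fact that the relevant obstruction is a priori only a free homotopy class in $[S^{j-1},Y]$, which is harmless here since $\pi_{j-1}(Y)=0$ and $Y$ is path-connected) but removes the external citation. Your alternative via a CW model of $Y$ with trivial $k$-skeleton plus cellular approximation is also valid and is perhaps the quickest fully standard argument. No gaps.
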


\begin{proof}
Let $h:Z\to X$ be a homotopy equivalence where $Z$ is CW-complex of dimension at most $k$, and let $f:X\to Y$ be any continuous map.
Then $f$ is homotopic to a constant map if and only if $fh$ is.
Now, $\{*\}\to Y$ is a $k$-equivalence, so the composition induces a surjection $[Z,\{*\}]\to [Z,Y]$ of the homotopy classes of maps (see Whitehead's Theorem in Chapter 10, Section 3 of \cite{May}).
But $[Z,\{*\}]$ consists only of one map, so every map $Z\to Y$ is null-homotopic.
Therefore, $fh$ is null-homotopic.
\end{proof}

We will use the following variation of Quillen's fibre theorem (cf. Proposition A.1 of \cite{PS22}).

\begin{theorem}[{Quillen's fibre theorem}]
\label{thm:quillen}
Let $f:X\to Y$ be a map between posets of finite height, and let $n\geq -1$.
If $f^{-1}(Y_{\leq y}) * Y_{>y}$ is $n$-connected for all $y\in Y$ then $f$ is an $(n+1)$-equivalence.

If in addition $X,Y$ are $G$-complexes, $f$ is $G$-equivariant and $f^{-1}(Y_{\leq y}) * Y_{>y}$ is $\Stab_G(y)$-contractible for all $y\in Y$, then $f$ is a $G$-homotopy equivalence.
\end{theorem}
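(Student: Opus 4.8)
The plan is to derive both statements from the non-Hausdorff mapping cylinder of $f$, essentially as in \cite[Proposition A.1]{PS22} (and ultimately \cite{Qui78}). First I would form the poset $M = M(f)$ on the disjoint union $X \sqcup Y$ by keeping the given orders inside $X$ and inside $Y$ and declaring, for $x \in X$ and $y \in Y$, that $x < y$ in $M$ exactly when $f(x) \le y$. Then $X$ and $Y$ are full subposets of $M$, and the order-preserving retraction $r \colon M \to M$ with $r|_X = f$, $r|_Y = \id_Y$ has image $Y$ and satisfies $\id_M \le r$; hence $|Y| \hookrightarrow |M|$ is a homotopy equivalence. If $f$ is $G$-equivariant then $M$ is a $G$-poset, $r$ is a $G$-map, and this equivalence is $G$-equivariant, since order-preserving $G$-maps $g \le h$ induce $G$-homotopic realisations. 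Thus in both parts it is enough to understand the inclusion $|X| \hookrightarrow |M|$.

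For the first assertion I would build $|M|$ from $|\Delta(X)| = |X|$ by adjoining the elements of $Y$ one at a time, in the reverse order of a linear extension of $Y$, so that $y$ is adjoined only after every element of $Y_{>y}$. At the step adjoining $y$, the simplices of $\Delta(M)$ gained are precisely the chains $\sigma \cup \{y\} \cup \tau$ with $\sigma$ a chain in $f^{-1}(Y_{\le y})$ and $\tau$ a chain in $Y_{>y}$; equivalently, one glues the cone with apex $y$ onto the subcomplex $|\Delta(f^{-1}(Y_{\le y})) * \Delta(Y_{>y})| = |f^{-1}(Y_{\le y}) * Y_{>y}|$, which is already present. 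Since that subcomplex is $n$-connected by hypothesis, coning it off is an $(n+1)$-equivalence; composing over all of $Y$ (a finite process when the posets are finite, and reducing to that case by a routine colimit/compactness argument in general) shows that $|X| \hookrightarrow |M|$, hence $|f|$, is an $(n+1)$-equivalence.

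For the equivariant statement I would use the equivariant Whitehead theorem \ref{thm:whitehead}: as the action on an order complex is admissible, it suffices to prove that $|X^K| \hookrightarrow |M^K|$ is an ordinary homotopy equivalence for every subgroup $K \le G$. One checks that $M^K = M(f^K)$ for the restriction $f^K \colon X^K \to Y^K$ (well defined since $f$ is $G$-equivariant), so $|M^K| \simeq |Y^K|$ and it remains to see that $f^K$ is a homotopy equivalence. I would get this from the first assertion, applied to $f^K$ with arbitrarily large $n$, once I know that $(f^K)^{-1}(Y^K_{\le y}) * Y^K_{>y}$ is contractible for all $y \in Y^K$. But this complex equals $\bigl(f^{-1}(Y_{\le y}) * Y_{>y}\bigr)^K$, and $K \le \Stab_G(y)$ because $y$ is $K$-fixed, so its contractibility follows from the hypothesis that $f^{-1}(Y_{\le y}) * Y_{>y}$ is $\Stab_G(y)$-contractible, together with the fact (again a case of \ref{thm:whitehead}) that a $\Stab_G(y)$-contractible complex has contractible fixed points under every subgroup of $\Stab_G(y)$. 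Hence $f^K$ is a homotopy equivalence for all $K$, so $f$ is a $G$-homotopy equivalence.

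I expect the main work to be in the middle paragraph: confirming that in the chosen order the simplices added at each stage form exactly the cone on $f^{-1}(Y_{\le y}) * Y_{>y}$ with its base already glued in, and the standard but slightly fiddly point that coning off an $n$-connected CW-subcomplex is an $(n+1)$-equivalence; in the equivariant part the only genuinely new point over \cite[Proposition A.1]{PS22} is transporting the join hypothesis to fixed points via the identities $M^K = M(f^K)$ and $(f^{-1}(Y_{\le y}))^K = (f^K)^{-1}(Y^K_{\le y})$, which I would verify directly.
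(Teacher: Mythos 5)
Your proof is correct and follows essentially the route of the source the paper itself cites for this statement (the paper gives no proof, only the reference to \cite[Proposition A.1]{PS22}): the non-Hausdorff mapping cylinder $M(f)$ deformation retracts onto $Y$, the inclusion $|X|\hookrightarrow |M(f)|$ is analysed by attaching, for each $y$ in reverse order of a linear extension, the cone on the already-present subcomplex $f^{-1}(Y_{\leq y})*Y_{>y}$, and the equivariant statement reduces to fixed points via $M(f)^K=M(f^K)$ and Theorem \ref{thm:whitehead}. The steps you flag as fiddly (coning off an $n$-connected subcomplex being an $(n+1)$-equivalence, and the transfinite/colimit reduction for infinite posets of finite height) are indeed the only nonformal points, and your treatment of them is standard and correct.
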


We will also invoke the following well-known consequence of the Hurewicz theorem and Whitehead's theorem:

\begin{theorem}
\label{thm:spherical}
Let $X$ be a CW-complex of dimension $n$.
If $X$ is simply connected and $\widetilde{H}_{m}(X,\ZZ) = 0$ for all $m < n$ then $X$ is homotopy equivalent to a (possibly empty) wedge of spheres of dimension $n$.
\end{theorem}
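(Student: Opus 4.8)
The plan is to derive the statement from the Hurewicz theorem together with the homological form of Whitehead's theorem, once the low-dimensional cases have been disposed of by hand. Since $X$ is simply connected it is in particular non-empty and path-connected, so $n\geq 0$. If $n=0$ then $X$ is a single point, and if $n=1$ then $X$ is a connected graph with trivial fundamental group, hence a tree, and therefore contractible; in either case $X$ is the empty wedge of $n$-spheres. So from now on I would assume $n\geq 2$.

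The first preparatory observation is that $H_n(X,\ZZ)$ is free abelian: because $X$ has dimension $n$ there are no cellular $(n+1)$-chains, so $H_n(X,\ZZ)$ coincides with the group of cellular $n$-cycles, a subgroup of the free abelian group of cellular $n$-chains, hence itself free abelian. Next, the hypotheses say that $X$ is $(n-1)$-connected: indeed $\pi_1(X)=0$, and applying the Hurewicz theorem successively in degrees $2,\dots,n-1$ and using $\widetilde{H}_m(X,\ZZ)=0$ for $m<n$ gives $\pi_m(X)=0$ for $m<n$. A final application of Hurewicz yields an isomorphism $\pi_n(X)\xrightarrow{\cong}H_n(X,\ZZ)$, so $\pi_n(X)$ is free abelian.

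Now I would choose a basis $(\alpha_i)_{i\in I}$ of $\pi_n(X)$, represent each $\alpha_i$ by a based map $S^n\to X$, and assemble these into a single map $f\colon W\to X$, where $W=\bigvee_{i\in I}S^n$. Since $n\geq 2$, the wedge $W$ is simply connected, with $\widetilde{H}_m(W,\ZZ)=0$ for $m\neq n$ and $H_n(W,\ZZ)$ free abelian on the fundamental classes of the wedge summands. By construction $f_*$ sends this basis to a basis of $H_n(X,\ZZ)$, hence is an isomorphism in degree $n$; it is trivially an isomorphism in every other degree, both reduced homologies vanishing there. As $W$ and $X$ are simply connected CW-complexes and $f$ induces an isomorphism on integral homology in all degrees, Whitehead's theorem (homological form) shows that $f$ is a homotopy equivalence. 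Therefore $X\simeq\bigvee_{i\in I}S^n$, a (possibly empty) wedge of $n$-spheres.

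The result is classical and presents no real obstacle; the only points that require a little care are the separate treatment of $n\leq 1$, the remark that the top homology of an $n$-dimensional CW-complex is automatically free abelian --- which is what permits realising a complete set of generators of $\pi_n(X)$ by one map out of a wedge of $n$-spheres --- and invoking the homological (simply connected) version of Whitehead's theorem rather than the cellular-approximation form.
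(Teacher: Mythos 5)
Your proof is correct and is exactly the argument the paper has in mind: the paper offers no proof of Theorem \ref{thm:spherical}, simply describing it as a well-known consequence of the Hurewicz and Whitehead theorems, which is precisely the route you take (low-dimensional cases by hand, freeness of the top cellular homology, inductive Hurewicz to get $(n-1)$-connectivity, a map from a wedge of $n$-spheres realising a basis of $\pi_n(X)\cong H_n(X,\ZZ)$, and the homological form of Whitehead's theorem). No gaps.
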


For equivariant homotopy types, we will use the following equivariant version of Whitehead's theorem:

\begin{theorem}
[{Equivariant Whitehead}]
\label{thm:whitehead}
Let $X,Y$ be $G$-CW-complexes together with a $G$-equivariant continuous map $f:X\to Y$.
Then $f$ is a $G$-homotopy equivalence if and only if $f:X^H\to Y^H$ is a homotopy equivalence for every subgroup $H\leq G$.
\end{theorem}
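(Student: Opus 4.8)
The plan is to prove the two implications separately; the forward one is formal, and the reverse one — the substance of the theorem — is classical, so in the actual write-up I would ultimately invoke a standard reference, but here is the argument I have in mind. For the forward implication, if $f$ is a $G$-homotopy equivalence with $G$-homotopy inverse $g$, then since $Z\mapsto Z^H$ is a functor on $G$-spaces that carries $G$-homotopies to homotopies, $g^H\colon Y^H\to X^H$ is a homotopy inverse to $f^H$ for every subgroup $H\le G$.

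For the reverse implication I would run equivariant obstruction theory over the $G$-CW structures. Recall that a $G$-CW-complex is built by successively attaching equivariant cells of the form $G/H\times D^n$ along $G$-maps $G/H\times S^{n-1}\to X^{(n-1)}$, and that for any $G$-space $Z$ there is a natural adjunction homeomorphism $\mathrm{Map}_G(G/H\times A,Z)\cong\mathrm{Map}(A,Z^H)$. Hence both the extension of a $G$-map over an attached cell $G/H\times D^n$ and the deformation of two $G$-maps agreeing on $G/H\times S^{n-1}$ are governed by the homotopy groups $\pi_*(Z^H)$ (read componentwise, since the $Z^H$ need not be connected). The hypothesis that every $f^H$ is a homotopy equivalence — in particular a weak equivalence, so $f^H_*$ is an isomorphism on all homotopy groups at all basepoints — makes all the resulting obstructions vanish.

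Concretely, I would first build, cell by cell over the $G$-CW structure of $Y$, a $G$-map $g\colon Y\to X$ together with a $G$-homotopy $fg\simeq_G\mathrm{id}_Y$: over each cell $G/K\times D^m$ of $Y$ one must lift through $f$ up to $G$-homotopy rel boundary, and the obstruction lies in the relative homotopy groups of the homotopy fibre of $f^K$, which vanish. Next, since $f^K$ and $(fg)^K\simeq\mathrm{id}$ are weak equivalences, two-out-of-three on each fixed-point set gives that $g^K$ is a weak equivalence for all $K$; applying the same construction to $g$ produces a $G$-map $h\colon X\to Y$ with $gh\simeq_G\mathrm{id}_X$. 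Finally the formal argument closes it: $f\simeq_G f(gh)=(fg)h\simeq_G h$, so $gf\simeq_G gh\simeq_G\mathrm{id}_X$, and $g$ is a two-sided $G$-homotopy inverse of $f$.

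The main obstacle is the equivariant obstruction theory behind the cell-by-cell constructions: one must carefully organise the induction over equivariant skeleta, track the fixed-point adjunction at each stage, and handle the basepoint and path-component bookkeeping coming from the fact that the fixed-point subspaces $X^H$ and $Y^H$ can be disconnected (so "weak equivalence" must be read componentwise throughout). Since this is entirely standard, in the write-up I would not reproduce it but simply cite the textbook form of the equivariant Whitehead theorem.
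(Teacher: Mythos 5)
Your argument is correct and is the standard textbook proof: the forward direction via functoriality of $(-)^H$, and the converse via the fixed-point adjunction $\mathrm{Map}_G(G/H\times A,Z)\cong\mathrm{Map}(A,Z^H)$, cell-by-cell lifting over the $G$-CW structure (the equivariant HELP lemma), two-out-of-three, and the formal two-sided-inverse argument at the end. The paper itself offers no proof of this theorem — it is quoted as a classical fact — so citing a standard reference, as you propose, is exactly what is expected here.
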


Recall that if $X,Y$ are $G$-posets and $f,g:X\to Y$ are $G$-equivariant order-preserving maps such that $f(x)\leq g(x)$ for all $x\in X$, then $f,g$ induce $G$-homotopy equivalent maps in the geometric realisations.

\begin{proposition}
\label{prop:linksHighlyConnected}
Let $K$ be a finite dimensional $G$-complex, $L\leq K$ a full $G$-invariant subcomplex, and $n\geq -1$.
Assume that for all $\sigma \in K\setminus L$, the link $\Lk_L(\sigma)$ is $(n-|\sigma|)$-connected (resp. $\Stab_G(\sigma)$-contractible).
Then the inclusion $L\hookrightarrow K$ is an $n$-equivalence (resp. a $G$-equivalence).
\end{proposition}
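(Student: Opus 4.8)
The plan is to induct on $d=\dim(K\setminus L)$, at each stage deleting the open stars of the top-dimensional simplices of $K\setminus L$ and thereby lowering $d$; the induction bottoms out at $d=-1$, where $K=L$.

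First I would set up the inductive step. Assume $d\geq 0$ and let $S$ be the ($G$-invariant) set of $d$-dimensional simplices of $K\setminus L$. Since $d$ is maximal, each $\sigma\in S$ is a maximal simplex of $K\setminus L$, which forces $\Lk_K(\sigma)=\Lk_L(\sigma)$: a vertex $w$ of $\Lk_K(\sigma)$ outside $L$ would produce a simplex $\sigma\cup\{w\}$ of $K\setminus L$ strictly larger than $\sigma$. Put $K':=\{\tau\in K\tq\sigma\not\subseteq\tau\text{ for all }\sigma\in S\}$, a $G$-invariant subcomplex containing $L$ (in which $L$ is still full). A short check — using that the vertices of the members of $S$ lie outside $L$ — shows that $K'\setminus L$ is exactly the set of simplices of $K\setminus L$ of dimension $<d$, so $\dim(K'\setminus L)<d$, and that $\Lk_L(\tau)$ computed inside $K'$ agrees with $\Lk_L(\tau)$ computed inside $K$ for every $\tau\in K'\setminus L$. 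Hence the inductive hypothesis applies to $L\leq K'$ and gives that $L\hookrightarrow K'$ is an $n$-equivalence (resp. a $G$-equivalence); it remains to treat $K'\hookrightarrow K$.

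Next I would realise $K$ as a pushout. From $\St_K(\sigma)=\Lk_K(\sigma)*\sigma=\Lk_L(\sigma)*\sigma$ for $\sigma\in S$ and, once more, the maximality of $d$, one checks that distinct members of $S$ have disjoint open stars and that $\St_K(\sigma)\cap K'=\Lk_L(\sigma)*\partial\sigma$; hence $|K|$ is the geometric realisation of the pushout of
\[
|K'|\;\hookleftarrow\;\coprod_{\sigma\in S}|\Lk_L(\sigma)*\partial\sigma|\;\hookrightarrow\;\coprod_{\sigma\in S}|\Lk_L(\sigma)*\sigma|,
\]
a $G$-pushout along a cofibration, with $G$ permuting the factors indexed by $S$ and $\Stab_G(\sigma)$ acting on the $\sigma$-th factor. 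For the non-equivariant statement: $\Lk_L(\sigma)*\sigma$ is contractible (a join with a simplex), while $\Lk_L(\sigma)*\partial\sigma$ is $(n-1)$-connected by the connectivity of joins — $\Lk_L(\sigma)$ is $(n-|\sigma|)$-connected and $\partial\sigma$ is a sphere of dimension $|\sigma|-2$ — so the right-hand map above is an $n$-equivalence, and cobase change of $n$-equivalences along cofibrations makes $K'\hookrightarrow K$ an $n$-equivalence; composing with $L\hookrightarrow K'$ completes the non-equivariant case. For the equivariant statement: when each $\Lk_L(\sigma)$ is $\Stab_G(\sigma)$-contractible, both $\Lk_L(\sigma)*\partial\sigma$ and $\Lk_L(\sigma)*\sigma$ are $\Stab_G(\sigma)$-contractible (the join of a $\Stab_G(\sigma)$-contractible complex with any $\Stab_G(\sigma)$-complex is $\Stab_G(\sigma)$-contractible), so the right-hand map restricts on each factor to a $\Stab_G(\sigma)$-homotopy equivalence and is therefore a $G$-homotopy equivalence after assembling over the $G$-orbits in $S$; the equivariant form of the Gluing Lemma \ref{lm:gluing} — equivalently, passing to $H$-fixed points for every $H\leq G$, which commute with the pushout as all maps are cofibrations, and then applying the non-equivariant conclusion together with the equivariant Whitehead Theorem \ref{thm:whitehead}, exactly as in the proof of Proposition \ref{prop:extendingBySubgroups} — then makes $K'\hookrightarrow K$ a $G$-equivalence, and composing with $L\hookrightarrow K'$ finishes.

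The step I expect to cost the most effort is the combinatorial bookkeeping showing that $K$ is genuinely the $G$-pushout displayed above: that the open stars of the top simplices of $K\setminus L$ are pairwise disjoint, that $\Lk_K(\sigma)=\Lk_L(\sigma)$ for $\sigma\in S$, and that $\St_K(\sigma)\cap K'=\Lk_L(\sigma)*\partial\sigma$, together with the compatibility of this pushout with the $G$-action. The homotopy-theoretic inputs — the connectivity of joins, cobase change of $n$-equivalences along cofibrations, and the fixed-point form of the Gluing Lemma — are standard.
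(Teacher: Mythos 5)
Your argument is correct, but it takes a genuinely different route from the paper. The paper gives a one-step poset argument: it factors $L\hookrightarrow K$ through the subposet $\N$ of simplices meeting $L$, observes that $\sigma\mapsto\sigma\cap L$ is an equivariant deformation retraction $\N\to L$ (this is where fullness enters), and then applies Quillen's fibre Theorem \ref{thm:quillen} to $\N\hookrightarrow K$ after computing $\N_{\geq\sigma}*K_{<\sigma}\simeq_{\Stab_G(\sigma)}\Lk_L(\sigma)*S^{|\sigma|-2}$ for each $\sigma$ in the face poset of $K\setminus L$. You instead induct on $\dim(K\setminus L)$ and peel off the top-dimensional simplices by an explicit CW pushout; the combinatorial checks you flag (disjointness of the open stars, $\Lk_K(\sigma)=\Lk_L(\sigma)$ for maximal $\sigma$, $\St_K(\sigma)\cap K'=\Lk_L(\sigma)*\partial\sigma$) all go through, again using fullness. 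Note that the essential numerical input is identical in both proofs — the $(n-1)$-connectivity of $\Lk_L(\sigma)*\partial\sigma\cong\Lk_L(\sigma)*S^{|\sigma|-2}$ — so neither approach is doing more work at the homotopy-theoretic core; the fibre theorem simply packages your induction-plus-cobase-change into a single application. What your route costs is one extra standard fact not in the paper's toolkit: the Gluing Lemma \ref{lm:gluing} as stated only covers genuine homotopy equivalences, so for the non-equivariant case you must separately invoke that cobase change along a cofibration preserves $n$-equivalences (true, but you should cite it, e.g. from \cite{May}); what it buys is independence from the fibre theorem and a very concrete picture of how $K$ is built from $L$ one layer at a time.
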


\begin{proof}
Regard $K$ and $L$ as posets via their face posets, and we remove the empty simplex (otherwise we get contractible posets).
Let $\N$ be the subposet of $K$ consisting of simplices containing some vertex in $L$.
Then the map $r:\N\to L$ that takes $\sigma\in \N$ to the non-empty simplex $\sigma \cap L := \{v\in \sigma\tq v\in L\} \in L$ is well-defined, $G$-equivariant and order-preserving (here we are using that $L$ is a full subcomplex).
Moreover, if $i:L\hookrightarrow \N$ is the inclusion then $ri$ is the identity of $L$ and $ir(\sigma)\leq \sigma$, so $r$ and $i$ are $G$-homotopy equivalences.
Now, the inclusion $L\hookrightarrow K$ factorises through $\N$ at the face poset level, so it is enough to show that $\N\hookrightarrow K$ is an $n$-equivalence (resp. a $G$-equivalence).

Note that $K\setminus \N$ is exactly the face poset of $K\setminus L$.
For $\sigma \in K\setminus L$, we have $\N_{\geq \sigma} = \{\tau\in K \tq \tau\cap L \neq\emptyset, \tau \geq \sigma\}$, and for $\tau \in \N_{\geq \sigma}$ we have $r(\tau)\in \Lk_L(\sigma)$.
The map $r$ above restricts therefore to a map $r: \N_{\geq \sigma} \to \Lk_L(\sigma)$ which is also a $\Stab_G(\sigma)$-homotopy equivalence with inverse given by $\tau \in \Lk_L(\sigma)\mapsto \tau\cup \sigma \in \N_{\geq \sigma}$.
This shows that
\[ \N_{\geq \sigma} * K_{<\sigma} \simeq_{\Stab_G(\sigma)} \Lk_L(\sigma) * S^{|\sigma|-2},\]
which is $(n-|\sigma|)+(|\sigma|-3)+2 = (n-1)$-connected (resp. $\Stab_G(\sigma)$-contractible).
By Quillen's fibre Theorem \ref{thm:quillen} we conclude that $\N\hookrightarrow K$ is an $n$-equivalence (resp. a $G$-homotopy equivalence), so the composition $L\hookrightarrow \N\hookrightarrow K$ is an $n$-equivalence (resp. a $G$-homotopy equivalence). 
\end{proof}

\begin{lemma}
[{``Gluing lemma"}]
\label{lm:gluing}
Suppose we have a commutative diagram of topological spaces with continuous maps
\[ \xymatrix{
X \ar[d] & A \ar[l]_f \ar[r] \ar[d] & Y \ar[d]\\
X' & A' \ar[l]_{f'} \ar[r] & Y'
}\]
such that the vertical arrows are homotopy equivalences and $f,f'$ are cofibrations.
Then the map induced on the pushouts $X\cup_A Y\to X'\cup_{A'}Y'$ is a homotopy equivalence.
\end{lemma}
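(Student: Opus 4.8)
The statement is the classical \emph{gluing lemma} of homotopy theory; here is the plan I would follow (it is a standard fact, closely related to the adjunction-space arguments of \cite[Proposition 0.18]{Hatcher} and the cofibration theory of \cite{May}). Denote by $g\colon A\to Y$ and $g'\colon A'\to Y'$ the right-hand horizontal maps of the two pushout diagrams, and for maps $u\colon C\to P$, $v\colon C\to Q$ write
\[ T(u,v)=\bigl(P\sqcup(C\times[0,1])\sqcup Q\bigr)/\bigl((c,0)\sim u(c),\ (c,1)\sim v(c)\bigr) \]
for the double mapping cylinder, with its natural collapse map $\pi_{u,v}\colon T(u,v)\to P\cup_C Q$. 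The map of diagrams in the hypothesis induces $T(f,g)\to T(f',g')$, fitting into a commutative square with $\pi_{f,g}$ and $\pi_{f',g'}$; so it suffices to prove (i) that $\pi_{f,g}$ and $\pi_{f',g'}$ are homotopy equivalences, and (ii) that $T(f,g)\to T(f',g')$ is a homotopy equivalence.

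For (i) I would use that $P\cup_u(C\times[0,1])$ is the mapping cylinder of $u$, and that when $u$ is a cofibration this mapping cylinder admits a deformation onto $P$ that is relative to $C\times\{1\}$ in the strong sense needed to glue it to $Q$ along the cofibration $C\times\{1\}\hookrightarrow P\cup_u(C\times[0,1])$; this is a standard property of cofibrations. For (ii) the point is the homotopy invariance of the double mapping cylinder: $T(f,g)$ is assembled from $X$, $A\times[0,1]$ and $Y$ along the two canonical cofibrations $A\times\{0\},A\times\{1\}\hookrightarrow A\times[0,1]$, the three maps $X\to X'$, $A\times[0,1]\to A'\times[0,1]$, $Y\to Y'$ are homotopy equivalences, and they respect this gluing data; one then builds a homotopy inverse on $T(f',g')$ out of homotopy inverses of the three pieces by first matching the homotopies on the glued subspaces and then extending them over the rest via the homotopy extension property. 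Combining (i) and (ii) with the comparison square yields that $X\cup_A Y\to X'\cup_{A'}Y'$ is a homotopy equivalence.

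Conceptually, the two steps express precisely that a pushout along a cofibration is a homotopy pushout, and that homotopy pushouts carry objectwise homotopy equivalences to homotopy equivalences. The step I expect to require genuine care is (ii) in the stated generality: no CW hypothesis is available, so one must produce an honest homotopy inverse rather than merely a weak equivalence, and this is exactly where the homotopy extension property and the bookkeeping of relative homotopies enter. Everything else reduces to formal manipulations with mapping cylinders and the naturality of the collapse maps $\pi_{u,v}$.
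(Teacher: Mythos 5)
Your outline is correct and is precisely the standard double-mapping-cylinder argument; the paper does not prove this lemma itself but simply cites Lemma 2.1.3 of \cite{May2} and 7.5.7 of \cite{Brown}, which carry out exactly the two steps you describe. (The only imprecision is in step (i): the deformation of the mapping cylinder of a cofibration onto its target cannot literally fix $C\times\{1\}$; the correct formulation is that the collapse map is a homotopy equivalence \emph{under} $C$, which is what allows it to be glued with the identity of $Q$.)
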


\begin{proof}
See Lemma 2.1.3 of \cite{May2} or 7.5.7 of \cite{Brown}.
\end{proof}

\bigskip

\noindent
\textbf{Declarations of interest:} none.

\end{document}